\newtheorem{theorem}{Theorem}[section]
\newtheorem{lemma}{Lemma}[section]
\newtheorem*{mylemma}{Lemma}
\newtheorem{remark}{Remark}[section]
\newtheorem{proposition}{Proposition}[section]
\newtheorem{corollary}{Corollary}[section]
\newtheorem{definition}{Definition}[section]
\newcommand \Diff {\mathcal{C}_{\Gamma_s}}
\newcommand \p {\partial}
\newcommand \R {\mathbb{R}}
\renewcommand \L {\mathrm{L}}
\newcommand \W {\mathrm{W}}
\newcommand \HH {\mathbf{H}}
\newcommand \LL {\mathbf{L}}
\newcommand \LLL {\mathbb{L}}
\renewcommand \H {\mathrm{H}}
\newcommand \I {\mathrm{I}}
\newcommand \Id {\mathrm{Id}}
\renewcommand \d {\mathrm{d}}
\renewcommand \det {\mathrm{det}}
\newcommand \cof {\mathrm{cof}}
\DeclareMathOperator{\divg}{div}
\gdef\SetFigFont#1#2#3#4#5{%
  \reset@font\fontsize{#1}{#2pt}%
  \fontfamily{#3}\fontseries{#4}\fontshape{#5}%
  \selectfont}%
\title{Feedback stabilization of a two-fluid surface tension system modeling the motion of a soap bubble at low Reynolds~number:\\The two-dimensional case\thanks{This work is supported by...}}
\author[1, 2]{S\'ebastien Court}
\affil[1]{\begin{small}Department of Mathematics, University of Innsbruck, Technikerstrasse 13, 6020 Innsbruck, Austria.\end{small}}
\affil[2]{\begin{small}Digital Science Center, University of Innsbruck, Innrain 15, 6020 Innsbruck, Austria. Email: {\tt sebastien.court@uibk.ac.at}\end{small}}
\begin{document}
\maketitle

\begin{abstract}
The aim of this paper is to design a feedback operator for stabilizing in infinite time horizon a system modeling the interactions between a viscous incompressible fluid and the deformation of a soap bubble. The latter is represented by an interface separating a bounded domain of $\R^2$ into two connected parts filled with viscous incompressible fluids. The interface is a smooth perturbation of the 1-sphere, and the surrounding fluids satisfy the incompressible Stokes equations in time-dependent domains. The mean curvature of the surface defines a surface tension force which induces a jump of the normal trace of the Cauchy stress tensor. The response of the fluids is a velocity trace on the interface, governing the time evolution of the latter, via the equality of velocities. The data are assumed to be sufficiently small, in particular the initial perturbation, that is the initial shape of the soap bubble is close enough to a circle. The control function is a surface tension type force on the interface. We design it as the sum of two feedback operators: one is explicit, the second one is finite-dimensional. They enable us to define a control operator that stabilizes locally the soap bubble to a circle with an arbitrary exponential decay rate, up to translations, and up to non-contact with the outer boundary.
\end{abstract}

\noindent{\bf Keywords:} Feedback stabilization, Incompressible viscous fluid, Surface tension, Free boundary problems.\\
\hfill \\
\noindent{\bf AMS subject classifications (2020):} 76D55, 76D45, 93B52, 93D15, 35R35.

\tableofcontents

\section{Introduction}

In this paper we consider a model that describes the time evolution of an interface~$\Gamma(t)$ separating a bounded fluid domain~$\Omega \subset \R^2$ into two connected components. The fluid is assumed to be viscous and incompressible. This interface represents a soap bubble, and is subject to surface tension forces. Surface tension is the result of intermolecular forces in fluids (air or liquid, see~\cite{Probstein}). We consider the low Reynolds number case, that is the inertia forces are assumed to be negligible compared to the other involved forces (viscosity effects, surface tension, electric field, etc\dots), which is typically the case of blood flows~\cite{Blood2009} containing globules~\cite{Darling}. We will also neglect the temperature effects. Our aim is to stabilize the deformations of this soap bubble such that this latter converges to a circle, with a prescribed exponential decay rate, via the design of a feedback operator acting on the interface. Our contribution falls within the modeling and mathematical aspects of the motion of bubbles out of equilibrium.

\subsection{The model}
Due to incompressibility, the volume contained inside the soap bubble remains constant, and any sphere of this volume, strictly contained in~$\Omega$, can be considered as the sphere of reference~$\Gamma_s$. Denoting by $X(\cdot,t)$ the Lagrangian deformation of this sphere, the deformed soap bubble is given by~$\Gamma(t) = X(\Gamma_s,t)$, with the initial condition $X_0 = X(\cdot,0)$, and separates the whole domain~$\Omega$ into two subdomains~$\Omega^+(t)$ and~$\Omega^-(t)$, corresponding to the exterior and the interior of the soap bubble, respectively. The geometric description and notation are given in Figure~\ref{fig-deformation}.

\begin{minipage}{\linewidth}
\vspace*{1cm}
\begin{center}
\scalebox{0.45}{
\begin{picture}(0,0)
\includegraphics{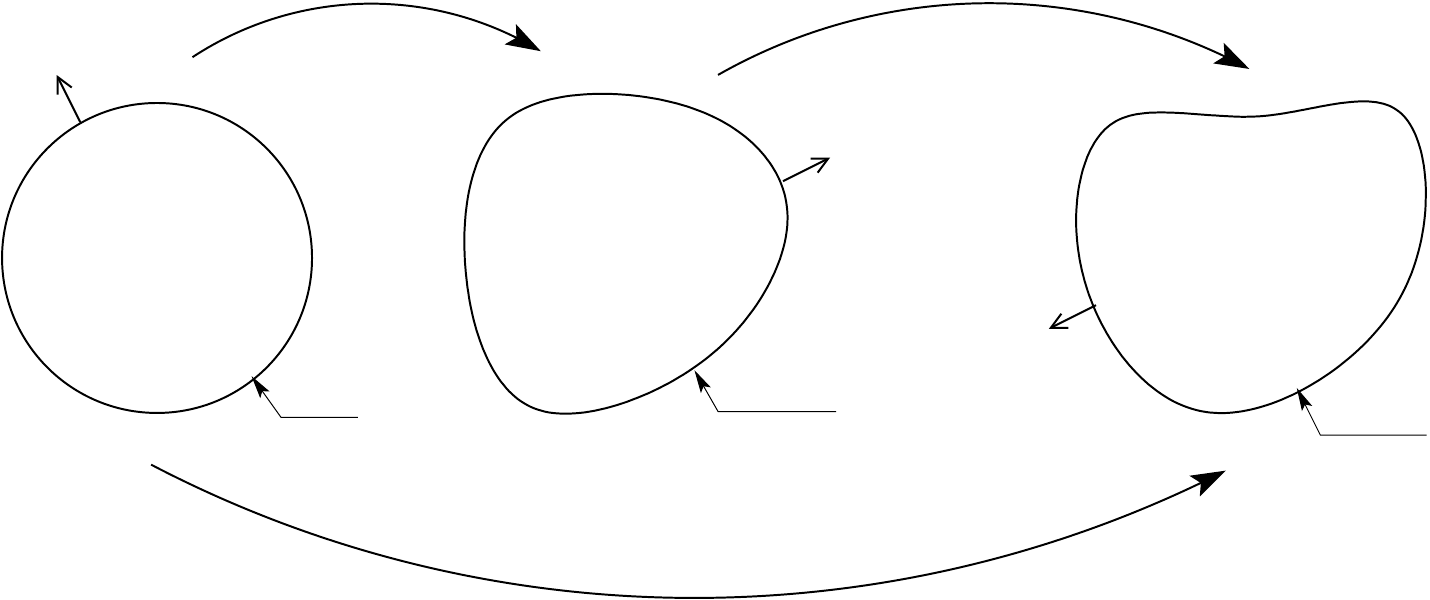}
\end{picture}
\setlength{\unitlength}{4144sp}
\begin{picture}(10887,4564)(1775,-5187)
\put(7600,-1700){\makebox(0,0)[lb]{\smash{{\SetFigFont{18}{20.4}{\rmdefault}{\mddefault}{\updefault}{\color[rgb]{0,0,0}$n_0$}%
}}}}
\put(2380,-1321){\makebox(0,0)[lb]{\smash{{\SetFigFont{18}{20.4}{\rmdefault}{\mddefault}{\updefault}{\color[rgb]{0,0,0}$n_s$}%
}}}}
\put(2700,-2621){\makebox(0,0)[lb]{\smash{{\SetFigFont{28}{20.4}{\rmdefault}{\mddefault}{\updefault}{\color[rgb]{0,0,0}$\Omega_s^-$}%
}}}}
\put(900,-3021){\makebox(0,0)[lb]{\smash{{\SetFigFont{28}{20.4}{\rmdefault}{\mddefault}{\updefault}{\color[rgb]{0,0,0}$\Omega_s^+$}%
}}}}
\put(9670,-2870){\makebox(0,0)[lb]{\smash{{\SetFigFont{18}{20.4}{\rmdefault}{\mddefault}{\updefault}{\color[rgb]{0,0,0}$n$}%
}}}}
\put(7336,-3616){\makebox(0,0)[lb]{\smash{{\SetFigFont{18}{20.4}{\rmdefault}{\mddefault}{\updefault}{\color[rgb]{0,0,0}$X_0(\Gamma_s)$}%
}}}}
\put(6500,-5000){\makebox(0,0)[lb]{\smash{{\SetFigFont{22}{24.4}{\rmdefault}{\mddefault}{\updefault}{\color[rgb]{0,0,0}$X(\cdot,t)$}%
}}}}
\put(4200,-500){\makebox(0,0)[lb]{\smash{{\SetFigFont{22}{24.4}{\rmdefault}{\mddefault}{\updefault}{\color[rgb]{0,0,0}$X_0$}%
}}}}
\put(8400,-500){\makebox(0,0)[lb]{\smash{{\SetFigFont{22}{24.4}{\rmdefault}{\mddefault}{\updefault}{\color[rgb]{0,0,0}$X(\cdot,t) \circ X_0^{-1}$}%
}}}}
\put(4000,-3700){\makebox(0,0)[lb]{\smash{{\SetFigFont{18}{20.4}{\rmdefault}{\mddefault}{\updefault}{\color[rgb]{0,0,0}$\Gamma_s$}%
}}}}
\put(12000,-3800){\makebox(0,0)[lb]{\smash{{\SetFigFont{18}{20.4}{\rmdefault}{\mddefault}{\updefault}{\color[rgb]{0,0,0}$\Gamma(t)$}%
}}}}
\put(10800,-2650){\makebox(0,0)[lb]{\smash{{\SetFigFont{28}{20.4}{\rmdefault}{\mddefault}{\updefault}{\color[rgb]{0,0,0}$\Omega^-(t)$}%
}}}}
\put(12800,-2650){\makebox(0,0)[lb]{\smash{{\SetFigFont{28}{20.4}{\rmdefault}{\mddefault}{\updefault}{\color[rgb]{0,0,0}$\Omega^+(t)$}%
}}}}
\end{picture}
}
\end{center}
\vspace*{-0.5cm}
\begin{figure}[H]
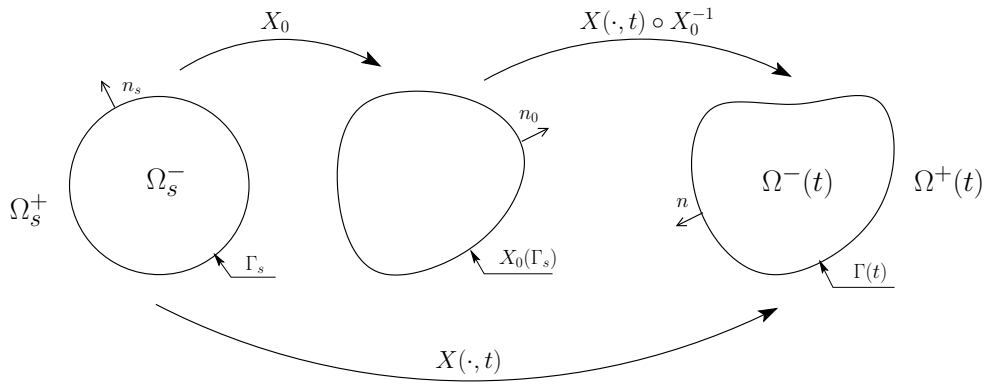

\caption{Deformation of the soap bubble, from the reference configuration $\Gamma_s$, to $\Gamma(t)$ at time $t$. \label{fig-deformation}}
\end{figure}
\end{minipage}
\FloatBarrier
\hfill \\
We will assume throughout the paper that $\Gamma(t)$ is a Jordan curve, that~$X(\cdot,t)$ is invertible and orientation preserving, and the {\it non-contact} condition, that is $\Gamma(t) \subset \mathring{\Omega}$. This can be guaranteed by assuming the data smooth and sufficiently small, in particular $X_0$ close enough to the identity, implying that $X(\cdot,t)$ stays close to the identity as well. With this condition, we then have $\Gamma(t) = \p \Omega^{-}(t)$ and $\p \Omega^+(t) = \p \Omega \cup \Gamma(t)$ (disjoint union), and $\Omega^+(t)$ is connected. The deformation $X$ and the velocity/pressure couples $(u^+,p^+)$, $(u^-,p^-)$ constitute the unknowns of the following system
\begin{eqnarray*}
- \divg \sigma(u^+,p^+) = f^+ & & \text{in } \Omega^+(t), \ t \in (0,\infty), \\
- \divg \sigma(u^-,p^-) = f^- & & \text{in } \Omega^-(t), \ t \in (0,\infty), \\
 \divg u^+ = 0 & & \text{in } \Omega^+(t), \ t \in (0,\infty), \\
 \divg u^- = 0 & & \text{in } \Omega^-(t), \ t \in (0,\infty), \\
u^+ = 0 & & \text{on } \p \Omega \times (0,\infty), \\
u^+ = u^- = \frac{\p X}{\p t} \left( X(\cdot,t)^{-1},t\right), & & 
\text{on } \Gamma(t), \ t \in (0,\infty), \\
\sigma(u^+,p^+)n^+ + \sigma(u^-,p^-)n^- =  \mu\kappa n^- + g & &
\text{on } \Gamma(t), t \in (0,\infty), \\
X(\cdot,0) = X_0 & & \text{on } \Gamma_s,
\end{eqnarray*}
where $\Gamma(t) = X(\Gamma_s,t)$ splits $\Omega$ into $\Omega^+(t)$ and $\Omega^-(t)$.  We adopt the Lagrangian formalism for describing the interface $\Gamma(t)$: Particles of coordinates $x\in \Gamma(t)$ are obtained uniquely from particles $y \in \Gamma_s$ as $x = X(y,t)$, and their velocity writes $\displaystyle \frac{\p X}{\p t}(y,t)$. On the other hand, the Eulerian velocities $u^+$ or $u^-$ describe the velocity field of particles occupying position $x \in \Omega^+(t)$ or $\Omega^-(t)$ at time $t$. Therefore from the equality of particle velocities on $\Gamma(t)$ we have the relation $\displaystyle \frac{\p X}{\p t}(y,t) = u^+(X(y,t),t) = u^-(X(y,t),t)$, leading to the equality of Eulerian velocities on $\Gamma(t)$ above, by using $y = X(x,t)^{-1}$. The pressure variables $p^+$ and $p^-$ play the role of Lagrange multipliers for the zero divergence conditions, referring to the incompressibility of the fluid. We have introduced the Cauchy stress tensor~$\sigma(u,p) := 2\nu \varepsilon(u) -p \Id$, where $\varepsilon(u) := \mathrm{Sym}(\nabla u) = \frac{1}{2}(\nabla u + \nabla u^T)$, and the viscosity~$\nu>0$ is constant and assumed to be the same in $\Omega^+(t)$ and~$\Omega^-(t)$, for the sake of simplicity, but without loss of generality. We denoted by~$n^+$ and~$n^-$ the outward unit normal of $\Omega^+(t)$ and $\Omega^-(t)$, respectively. By default we denote $n=n^- = -n^+$. The parameter $\mu >0$ is a given constant surface tension coefficient, and $\kappa$ denotes the mean curvature of~$\Gamma(t)$, with the convention~$\kappa <0$ for the 1-sphere. In dimension~2, the mean curvature is simply called the {\it curvature}. The right-hand-sides~$f^+$ and $f^-$ are given volume forces, representing for example the effect of an electric field~\cite{Zografov2014}. The function~$g$ is a surface tension type force, acting on the interface~$\Gamma(t)$. It will be considered as the control function, and will be chosen in the form of a feedback operator. For the sake of concision, we rewrite the system above as follows:
\begin{subequations} \label{mainsys}
\begin{eqnarray}
- \divg \sigma(u^\pm,p^\pm) = f^\pm \quad \text{ and } \quad \divg u^\pm = 0 & & \text{in } \Omega^\pm(t), \ t \in (0,\infty), \\
u^+ = 0 & &  \text{on } \p \Omega \times (0,\infty), \\
u^\pm = \frac{\p X}{\p t}\left( X(\cdot,t)^{-1},t\right) 
\quad \text{ and } \quad
-\left[ \sigma(u,p)\right]n = \mu \kappa n + g & & \text{on } \Gamma(t), \ t \in (0,\infty), \label{mainsys-jump}\\
X(\cdot,0) = X_0 & & \text{on } \Gamma_s. 
\end{eqnarray}
\end{subequations}
We have denoted by $[\varphi] = \varphi^+ - \varphi^-$ the jump across $\Gamma(t)$ of a vector/matrix field, and by $\varphi^\pm$ when we consider $\varphi^+$ and $\varphi^-$ separately, simultaneously, and respectively. In~\eqref{mainsys-jump}, the control function $g$ and the surface tension force $\mu\kappa n$ induces the jump of $\sigma(u,p)n$ across $\Gamma(t)$, and the response of the surrounding fluid is the trace of the velocity field on the interface $\Gamma(t)$, governing the time evolution of the latter via the time derivative of $X(\cdot,t)$. Recall that mapping~$X(\cdot,t)$ determines the interface $\Gamma(t)$, and therefore also the domains~$\Omega^\pm(t)$ in which the Stokes equations are set, as well as the mean curvature that satisfies the relation $\kappa n = \Delta_{\Gamma(t)} \Id$, involving the Laplace-Beltrami operator. Thus system~\eqref{mainsys} couples in a nonlinear manner the geometry given by $X(\cdot,t)$ and the state variables of the fluid, namely~$(u^\pm,p^\pm)$.\\
Existence of solutions for such a model in the context of two fluids separated by a closed free interface has been studied by Denisova and Solonnikov~\cite{Denisova1994, Denisova1995}, in line with prior contributions~\cite{Denisova1989, Denisova1990, Solonnikov1991, Denisova1991, Denisova1993}, based on the work of Rivkind~\cite{Rivkind1973, Rivkind1976, Rivkind1977, Rivkind1979}. More recently Pr\"{u}ss and Simonett revisited wellposedness questions in the context of the $L^p$-maximal regularity~\cite{Simonett2009, Simonett2010, Simonett2011}, addressing also the case of phase transitions~\cite{Simonett2012, Simonett2013, Simonett2016}. Modeling aspects, leading to the transmission equations on $\Gamma(t)$, were first introduced in~\cite{Rivkind1971}, as far as we know. Addressing advanced wellposedness questions for systems of type~\eqref{mainsys} can be a difficult task, as for example global existence of solutions for the Navier-Stokes model is still an open problem (see for example~\cite{Fischer2020}). We refer to~\cite{Denisova2021} for an exploration in this direction. In the present article we will only be interested in the stabilizability question, and will address wellposedness only for the linearized system as well as for the feedback-control-stabilized nonlinear system~\eqref{mainsys}.

\subsection{Main result}

Note that the volume enclosed by $\Gamma(t)$ is constant and prescribed by the one enclosed by the reference circle~$\Gamma_s$, due to incompressibility. Since any circle of the same volume, strictly included inside $\Omega$, is a stationary state (see Lemma~\ref{lemma-Temam}), we can decide to stabilize system~\eqref{mainsys} around any of these circles, and thus stabilization {\it around the sphere} is understood up to elements of the following space
\begin{equation*}
\Diff =  \left\{
X_c \in \HH^2(\Gamma_s)
\text{ such that }
X_c(\Gamma_s) \subset  \mathring{\Omega} 
\text{ is a circle of the same radius as }\Gamma_s
\right\}.
\end{equation*}
The initial configuration $\Gamma_0$ is represented via the deformation $X_0$ of $\Gamma_s$, so that $\Gamma_0 = X_0(\Gamma_s)$. Since we need smallness assumptions on the data, in particular $X_0$, in practice we can choose $X_c\in\Diff$ such that $\|X_0-X_c\|_{\HH^2(\Gamma_s)}$ is minimal, for example by restricting~$\Diff$ to translations. We refer to Remark~\ref{rk-admi} in section~\ref{sec-notation2} for further comments. This amounts to say that~$\Gamma_0$ is {\it close} to a circle of the same radius as~$\Gamma_s$. 
The main result of the present article is Theorem~\ref{th-main}:
\begin{theorem} \label{th-main}
Choose $X_c \in \Diff$. Let be $X_0 \in \HH^2(\Gamma_s)/\R^2$ and $f^\pm = f_{|\Omega^\pm(t)}$ such that $f\in \L^2(0,\infty;\LL^2(\Omega))$. For all $\lambda >0$, there exists a finite-dimensional linear operator $\mathcal{K}_{\lambda}$, depending only on~$\lambda$, $\Gamma_s$, $\nu$ and $\mu$, such that if the quantities $\|X_0-X_c\|_{\mathbf{H}^2(\Gamma_s)}$ and $\|e^{\lambda t}f\|_{\L^2(0,\infty;\LL^2(\Omega))}$ are small enough, then the solution of system~\eqref{mainsys} with
\begin{equation*}
g =  \left(r(\det \mathfrak{g})^{-1/2} \left(
\divg_{\Gamma_s}\big((\tau_s \otimes \tau_s) \nabla_{\Gamma_s}(X-X_c) \big)+
\mathcal{K}_{\lambda}\nabla_{\Gamma_s}(X-X_c)\right)\right) \circ X^{-1}
\end{equation*}
satisfies
\begin{equation*}
\left\| e^{\lambda t} (X-X_c) \right\|_{\L^2(0,\infty;\HH^{5/2}(\Gamma_s)/\R^2)\cap \H^1(0,\infty; \HH^{3/2}(\Gamma_s)/\R^2)} \leq C\|X_0-X_c\|_{\mathbf{H}^2(\Gamma_s)},
\end{equation*}
where $r$ denotes the radius of the circle~$\Gamma_s$, $\tau_s$ denotes the tangent vector of $\Gamma_s$, $C>0$ is a constant, and $\mathfrak{g}$ denotes the metric tensor of $\Gamma(t)$.
\end{theorem}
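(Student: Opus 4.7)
The strategy is to reformulate~(\ref{mainsys}) as an autonomous quasi-static evolution for the boundary deformation $Y := X - \Id$ on the fixed curve $\Gamma_s$, to linearize it around $Y = 0$, to stabilize the finite-dimensional unstable part of the resulting linear semigroup by a finite-rank feedback modulo $SE(2)$, and to close the argument on the nonlinear problem by a Banach fixed-point in exponentially-weighted Sobolev spaces. Because the fluid is quasi-static (Stokes), once $Y$ and the surface force are prescribed, the velocity trace on $\Gamma(t)$ is determined by a Dirichlet-to-Neumann type solution map of the two-fluid transmission Stokes problem in the current geometry; substituting it into the kinematic law $\p_t X = u^\pm \circ X$ produces a closed evolution of the schematic form $\p_t Y = \mathcal{S}(Y)\bigl[\mu \kappa(Y) n + g\bigr] + F(f^\pm, Y)$. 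Here $\mathcal{S}(Y)$ and the quantities on the right are pulled back to $\Omega_s^\pm$ using a fixed self-diffeomorphism $\Phi_Y$ of $\Omega$ that leaves $\p\Omega$ invariant and sends $\Gamma_s$ to $\Gamma(t)$ (e.g.\ a cut-off harmonic lift of $Y$), which is exactly the origin of the Jacobian factor $(\det \mathfrak{g})^{1/2}$ in the transport of the surface actuator.

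At $Y = 0$, the identity $\kappa n = \Delta_{\Gamma(t)} \Id$ linearizes to a principal operator of the form $-\Delta_{\Gamma_s} Y$ plus lower-order geometric corrections, giving a linear evolution $\p_t Y = A Y + B u_c + F$ with $A = -\mu\, \mathcal{S}_0 \Delta_{\Gamma_s}$, acting on an $SE(2)$-invariant closed subspace of $[\H^{3/2}(\Gamma_s)]^2$. Standard resolvent estimates for the two-fluid transmission Stokes problem show that $A$ has compact resolvent and generates an analytic semigroup of parabolic order three, and that $\ker A$ is exactly the three-dimensional tangent space to the $SE(2)$-action (two infinitesimal translations and one rotation). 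Only finitely many eigenvalues of $A$ satisfy $\mathrm{Re}\, z > -\lambda$; denote by $\pi_u$ the corresponding spectral projector and $E_u := \pi_u [\H^{3/2}(\Gamma_s)]^2$. On the quotient $E_u / SE(2)$ one verifies the Fattorini--Hautus unique continuation criterion, i.e.\ that no eigenvector of $A^*$ lies in $\ker B^*$ modulo the rigid-motion kernel, which holds thanks to the density of surface-tension actuators in the dual trace space. Finite-dimensional pole placement on $E_u / SE(2)$ then yields a finite-rank $\mathcal{K}_\lambda$ such that $A + B \mathcal{K}_\lambda \pi_u$ has spectral abscissa strictly less than $-\lambda$ on the quotient; transporting the feedback back to $\Gamma(t)$ via $\Phi_Y^{-1}$ introduces the $(\det \mathfrak{g})^{-1/2}$ factor appearing in the formula for $g$ in the statement.

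Finally, endow $\mathcal{X}_\lambda := \L^2(0,\infty; [\H^{5/2}(\Gamma_s)]^2) \cap \H^1(0,\infty; [\H^{3/2}(\Gamma_s)]^2)$ with the weight $e^{\lambda t}$ and, on a small ball of $\mathcal{X}_\lambda / SE(2)$, define the fixed-point map $\tilde Y \mapsto Y$ that solves the closed-loop \emph{linear} system (feedback $\mathcal{K}_\lambda$ included) driven by the nonlinear residuals obtained by inserting $\tilde Y$ into the curvature beyond linear order, into the pulled-back Stokes coefficients, into the jump Jacobian, and into the transported feedback. The linear maximal regularity estimate attached to $A + B\mathcal{K}_\lambda \pi_u$ provides continuity, while the Sobolev algebra property of $\H^{3/2}(\Gamma_s)$ and $\H^{5/2}(\Gamma_s)$ on a one-dimensional curve yields Lipschitz-with-small-constant estimates on the quadratic and higher residuals; the Banach fixed-point theorem then produces a unique solution, and the announced bound is precisely the linear estimate evaluated at the fixed point. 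The non-contact condition and the orientation-preserving property of $\Phi_Y$ are preserved along the iteration via the embedding $\mathcal{X}_\lambda \hookrightarrow \mathcal{C}([0,\infty); [\H^2(\Gamma_s)]^2)$ and the smallness of the data. The hard part is the final nonlinear step: the curvature depends on $Y$ through a rational function of its first derivatives multiplying its second derivatives, so the tame estimates compatible simultaneously with the exponential weight, the parabolic scaling of the Stokes-to-trace map, and the quotient by $SE(2)$ require careful bookkeeping, and one must verify that the feedback is transverse to the $SE(2)$-kernel so that the closed-loop dynamics descends cleanly to the quotient.
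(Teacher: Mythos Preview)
Your proposal is correct and follows essentially the same route as the paper: pull back to the reference configuration via an extension of $X$, reduce to an abstract evolution $\partial_t Z = \mu\,\mathcal{P}_{\Gamma_s}\Delta_{\Gamma_s} Z + \mathcal{P}_{\Gamma_s}G$ governed by a self-adjoint analytic generator with compact resolvent, stabilize the finitely many unstable modes by a finite-rank feedback, and close by a Banach fixed point in the exponentially weighted space $\mathcal{Z}_\infty(\Gamma_s)$. The only cosmetic differences are that the paper verifies controllability of the unstable subspace by an adjoint/unique-continuation argument (rather than the Hautus test you invoke) and constructs $\mathcal{K}_\lambda$ via a finite-dimensional Riccati equation (rather than pole placement); also note that the correct sign is $A=\mu\,\mathcal{P}_{\Gamma_s}\Delta_{\Gamma_s}$ (your $A=-\mu\,\mathcal{S}_0\Delta_{\Gamma_s}$ would be nonnegative and hence have infinitely many unstable modes), and the generator is of order one, not three.
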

This result implies the asymptotic convergence towards a stationary state corresponding to an immobile circle, up to a translation in~$\R^2$. Note that in the choice of $g$, we need two different operators, one explicit, namely $\divg_{\Gamma_s}\big((\tau_s \otimes \tau_s) \nabla_{\Gamma_s}(X-X_c) \big)$, and one of finite dimension. Both deal with the tangential derivative of $X-X_c$. This feedback stabilization result is in line with many others obtained for other fluid-structure models, for example~\cite{JPR2010, CourtEECT1, CourtEECT2}. But as far as we know, there are very few mathematical contributions that address control-related questions for models involving surface tension forces: The models considered in~\cite{Carlos2013, Antil2015, Alazard2017, Gancedo2020, Cui2021} deal with free boundary problems, that do not involve jump conditions like in~\eqref{mainsys-jump}. However, other non-mathematical references explain the practical realization of surface tension controls on a small scale~\cite{Encyclo, Rana2012}. 

\subsection{Method}

Like in~\cite{JPR2010, CourtEECT1, CourtEECT2}, our method is based on the feedback stabilization of the linearized system. For deriving the latter, we first need to rewrite system~\eqref{mainsys} in cylindrical domains, in order to uncouple the fluid domains and the state variables, in particular the deformation~$X$. Since this surface deformation is initially defined on~$\Gamma_s$ only, we need to define a suitable extension to the whole domain, which leads us to study the non-trivial question of extension of diffeomorphisms from boundaries. This is realized in section~\ref{sec-extension1}, and the corresponding proof relies on recent results on the harmonic extensions of diffeomorphisms (see Appendix~\ref{appendix1}).

Concerning the model, for the sake of simplicity, in the fluid domain we choose to consider the linear stationary Stokes system, which corresponds to a low Reynolds number fluid. Considering fluid models at average or high Reynolds number would introduce other difficulties. For example, in~\cite{Shkoller2007, Shkoller2013} the authors consider the Euler system, and existence of solutions is obtained with high-order energy estimates, without any operator formulation that could be used for designing a possible feedback operator. On the other hand, addressing the Navier-Stokes system consists classically in deriving an operator formulation of the linearized system, involving the non-stationary Stokes system, for which a {\it lifting method} is used. For the present model, based on jump conditions, it is not clear how a lifting method would enable us to derive an operator formulation. Still in the case of the non-stationary Stokes equations with transmission conditions, the respective authors of~\cite{Simonett2010} and~\cite{Denisova1994} first reduced the interface to a straight line, and derived existence and uniqueness results via pseudo-differential calculus techniques, which lead to a space-time operator for describing the solution. In practice, such approaches can not be used in a infinite-horizon stabilization problem, moreover dealing with general surfaces. Further, the unique continuation argument used in our case for obtaining approximate controllability and stabilizability of the linearized system does not apply to the case of the non-stationary Stokes system, as the latter would necessitate taking into consideration zeros of spherical harmonics, leading to difficulties that go beyond the scope of this article.

Therefore we adopt the stationary Stokes equations, as we prefer to focus on the time-evolution of the interface displacement, more specifically an operator formulation that involves the time derivative of the interface displacement only. For the linearized system, involving the Stokes system in time-independent domains, we describe the solution via a stationary Poincar\'e-Steklov operator denoted by~$\mathcal{P}_{\Gamma_s}$, of Neumann-to-Dirichlet type, mapping the different transmission conditions. The existence of this operator is obtained via the Ladyzhenskaya-Babu\v{s}ka-Brezzi condition. Next the operator formulation for time-evolution of the interface displacement is obtained, involving $\mathcal{P}_{\Gamma_s}$, as follows
\begin{equation*}
\frac{\p Z}{\p t}  - \mu \mathcal{P}_{\Gamma_s}(\divg_{\Gamma_s}
\nabla^{n_s}_{\Gamma_s} Z) = \mathcal{P}_{\Gamma_s}G  \text{ on } \Gamma_s \times (0,\infty), \quad Z(\cdot,0) = X_0-\Id \text{ on } \Gamma_s,
\end{equation*}
where $Z = X-\Id$ represents the displacement of the interface, $\nabla^{n_s}_{\Gamma_s} := (n_s \otimes n_s)\nabla_{\Gamma_s}$, and $G$ represents a control function to be chosen in a feedback form. Since $\nabla^{n_s}_{\Gamma_s}$ is not coercive (see section~\ref{sec-kernel}), we define a first feedback operator as $\mu \divg_{\Gamma_s} \nabla^{\tau_s}_{\Gamma_s} Z$, where $\nabla^{\tau_s}_{\Gamma_s}Z :=  (\tau_s \otimes \tau_s)\nabla_{\Gamma_s} Z$, so that $Z \mapsto -\mu \divg_{\Gamma_s} \nabla^{n_s}_{\Gamma_s}Z - \mu \divg_{\Gamma_s} \nabla^{\tau_s}_{\Gamma_s} Z = -\Delta_{\Gamma_s} Z$ is coercive. This operator is explicit, and supported on the tangent to~$\Gamma_s$, which is compatible with possible practical realization. The resulting operator generates an analytic semi-group of contraction, with compact resolvent. Next we prove approximate controllability for the linear system via a unique continuation argument. Since the spectrum of the operator is discrete, the number of unstable modes is of finite number, and thus we can reduce the problem to a finite-dimensional control problem where approximate controllability implies stailizability by a finite-dimensional feedback operator satisfying a Riccati equation. This feedback operator is re-used for defining another control function that stabilizes locally the nonlinear system, via a fixed-point argument, provided that the perturbations of the steady state are sufficiently small.

One of the main reasons why we restricted our study to dimension~2 is due to theoretical difficulties that arise in dimension~3 when trying to extend diffeomorphisms. Another reason lies in the fact that the linear evolution equation above, involving the operator~$\nabla^{n_s}_{\Gamma_s}$, would be {\it a priori} more complex in dimension~3. All these reasons are explained in section~\ref{sec-dim3}. Nevertheless, the methodology adopted in the present article for designing in practice the feedback operator is still valid in dimension~3. Throughout the paper we try to keep as much as possible a formalism that is not restricted to dimension~2.

The paper is organized as follows: Notation and functional spaces are defined in~section~\ref{sec-notation}. Comments and important properties of the model are described in section~\ref{sec-notation2}. A change of variable is introduced in section~\ref{sec-extension1}, enabling us to rewrite in section~\ref{sec-extension2} the main system in time-independent domains. Section~\ref{sec-linear} is devoted to the study of the corresponding linearized system, where in particular in section~\ref{sec-Steklov} we define operator $\mathcal{P}_{\Gamma_s}$, leading in section~\ref{sec-linSH} to an operator formulation. In section~\ref{sec-feedback} we design a linear feedback operator that stabilizes the linear system. In section~\ref{sec-nonlinear} we deduce another feedback operator that stabilizes the nonlinear system, and thus prove the main result. Questions related with the extension to dimension~3 of the present work are posed in section~\ref{sec-dim3}. Finally, technical proofs of intermediate results are given in the Appendix.

\section*{Acknowledgments}
Professor Jean-Pierre Raymond is warmly thanked for having introduced the problem, when the author was a PhD-student at the Institute of Mathematics of Toulouse.

\section{Functional setting and preliminaries} \label{sec-notation}

\subsection{Function spaces and notation}
Denote by $\L^2$, $\H^s$ and $\W^{k,p}$ the standard Lebesgue/Sobolev spaces of real-valued functions, and their multi-dimensional versions for $d=2$ as follows:
\begin{equation*}
\LL^2(\Omega_s^\pm) = [\L^2(\Omega_s^\pm)]^d, \quad 
\LLL^2(\Omega_s^\pm) = [\L^2(\Omega_s^\pm)]^{d\times d}, \quad
\mathds{L}^2(\Gamma_s) = [\L^2(\Gamma_s)]^{d\times (d-1)}.
\end{equation*}
The notation $\mathds{L}^2(\Gamma_s)$ applies when considering for example tangential gradients on~$\Gamma_s$. Naturally we transpose the same type of notation for other types of spaces and domains. Recall the notation~$\HH^{-1/2}(\Gamma_s) = \HH^{1/2}(\Gamma_s)'$. For matrix fields~$A,\ B$ of $\R^{2\times 2}$ we recall the inner product $A:B = \mathrm{trace}(A^TB)$ and the corresponding Euclidean norm satisfies $|AB|_{\R^{2\times 2}} \leq |A|_{\R^{2\times 2}}|B|_{\R^{2\times 2}}$. We will denote $\mathrm{Sym}(A) = \frac{1}{2}(A+A^T)$. Denote by~$\cof(A)$ the cofactor matrix of any matrix field~$A$, and note that in dimension~2 the mapping $A\mapsto \cof(A)$ is linear. Given two vectors $a$ and $b$ of $\R^2$, the tensor product $a\otimes b$ denotes the matrix of $\R^{2\times 2}$ defined by $(a\otimes b)_{ij} = a_ib_j$.\\

For $0< T \leq \infty$, the displacements $Z=X-\Id$ of $\Gamma_s$ will be considered in the following space
\begin{equation*}
\mathcal{Z}_{T}(\Gamma_s) := \L^2(0,T; \HH^{5/2}(\Gamma_s)/\R^2) \cap
\H^1(0,T; \mathbf{H}^{3/2}(\Gamma_s)/\R^2),
\end{equation*}
where quotient spaces have been introduced for considering displacements $Z$ up to a constant of $\R^2$. This is equivalent to consider deformations $X = Z+\Id$ up to translations of~$\R^2$. Note that the translations of $\R^2$ are also elements of the space~$\Diff$. For any Banach space $B$ and subset $I \subset B$, we define $\displaystyle
\|Z\|_{B/I} = \inf_{Z_I \in I}\|Z -Z_I\|_B$. 
Subsequently, because of~\eqref{mainsys-jump}, we will consider the velocity/pressure variables in the respective spaces
\begin{equation*}
\begin{array} {l}
\mathcal{U}_{T}(\Omega_s^+) := \left\{u\in \L^2(0,T; \HH^2(\Omega_s^\pm))\mid 
u_{| \p \Omega} = 0\right\}, \\
\mathcal{U}_{T}(\Omega_s^-) := \L^2(0,T; \HH^2(\Omega_s^-)),
\quad
\mathcal{Q}_{T}(\Omega_s^\pm) := \L^2(0,T; \H^1(\Omega_s^\pm)/\R).
\end{array}
\end{equation*}
We endow the spaces~$\mathcal{U}_{T}(\Omega_s^\pm)$ with the classical norms, and $\mathcal{Q}_{T}(\Omega_s^\pm)$ with $\|p^\pm \|_{\mathcal{Q}_{T}(\Omega_s^\pm)} := \| \nabla p^\pm\|_{\L^2(0,T;\LL^2(\Omega_s^\pm))}$. Note that the pressures in $\mathcal{Q}_{T}(\Omega_s^\pm)$ are determined up to a constant. Actually these constants are the residual static pressures $p_s^\pm$ corresponding to the stationary state. We refer to Lemma~\ref{lemma-Temam} for more details. Still for $0<T\leq \infty$, the data will be considered in the following spaces
\begin{equation*}
\mathcal{F}_{T}(\Omega_s^\pm) := \L^2(0,T; \LL^2(\Omega_s^\pm)), 
\quad
\mathcal{G}_{T}(\Gamma_s) := \L^2(0,T; \HH^{1/2}(\Gamma_s)),
\quad
\mathcal{Z}_0(\Gamma_s) := \HH^2(\Gamma_s)/\R^2.
\end{equation*}
Note that $\mathcal{Z}_0(\Gamma_s)$ is the trace space of $\mathcal{Z}_{T}(\Gamma_s)$, and we recall that the following continuous embedding holds:
\begin{equation*}
\mathcal{Z}_{\infty}(\Gamma_s) \hookrightarrow \mathcal{C}_b([0,\infty); \HH^2(\Gamma_s)).
\end{equation*}
The interest of this regularity framework is that we can define extensions $\tilde{X}$ of mappings $X$ that are of continuous in time with values in $\HH^{5/2}(\Omega_s^\pm)\hookrightarrow \mathcal{C}^1(\overline{\Omega_s^\pm})$. More specifically, extensions $\tilde{X}$ of $X$ will be considered in the following space:
\begin{equation*}
\mathcal{X}_{\infty}(\Omega_s^\pm) :=
\L^{\infty}(0,\infty; \HH^{5/2}(\Omega_s^\pm)).
\end{equation*}
Besides, the extensions $\tilde{X}$ are such that $\nabla \tilde X \in \L^{\infty}(0,\infty;\mathbb{H}^{3/2}(\Omega_s^\pm))$. The same property holds for the inverse of~$\nabla \tilde{X}$ (Corollary~\ref{coro-extension}), which is convenient for deriving Lipschitz estimates when stabilizing the nonlinear system in section~\ref{sec-nonlinear}, as the space $\mathbb{H}^{3/2}(\Omega_s^\pm)$ is an algebra (see~\cite[Proposition~B.1, page~283]{Grubb}). In the same fashion, the space~$\mathbb{H}^1(\Gamma_s)$ is also an algebra.\\

Recall the Petree-tartar lemma~\cite[Lemma A.38 page 469]{Ern}, that we will use several times.

\begin{mylemma}[Petree-Tartar lemma] \label{lemma-Petree}
Let $B_1$, $B_2$ and $B_3$ be Banach spaces. Let $A \in \mathcal{L}(B_1,B_2)$ be an injective operator, and let $C \in \mathcal{L}(B_1,B_3)$ be a compact operator. Assume that there exists a positive constant\footnote{Throughout the paper the notation $C$ refers to a positive constant generically independent of the different variables.} $C>0$ such that for all $\varphi \in B_1$ we have
\begin{equation*}
\|\varphi \|_{B_1} \leq C\left(\|A\varphi\|_{B_2}+\|C\varphi\|_{B_3}\right).
\end{equation*}
Then there exists $C>0$ such that
\begin{equation*}
\|\varphi \|_{B_1} \leq C\|A\varphi\|_{B_2}
\end{equation*}
for all~$\varphi \in B_1$.
\end{mylemma}

\subsection{On the surface tension model and the stationary states} \label{sec-notation2}

The surface tension force is generated by the mean curvature vector of the surface~$\Gamma(t)$. It is related to the Laplace-Beltrami operator via the following relation (see~\cite[p.~151, Exercise~2]{Willmore}):
\begin{equation*}
\kappa n  = \Delta_{\Gamma(t)} \Id. 
\end{equation*}
Using~\cite[Theorem~2.6]{Abels}, the following energy estimate holds, showing that the surface tension force derives from a potential energy quantified by the area of~$\Gamma(t)$:
\begin{equation*}
\mu \frac{\d}{\d t}|\Gamma(t) | + 2\nu \left( 
\|\varepsilon(u^+) \|_{\LLL^2(\Omega^+(t))}^2 + 
\|\varepsilon(u^-) \|_{\LLL^2(\Omega^-(t))}^2\right) = 
\langle f^+ , u^+\rangle_{\LL^2(\Omega^+(t))}
+ \langle f^- , u^-\rangle_{\LL^2(\Omega^-(t))}
+ \langle g , u^\pm\rangle_{\LL^2(\Gamma(t))}.
\end{equation*}
This energy is dissipated with the help of the viscosity terms. From~\cite[page~18]{Temam}, the kernel of~$\varepsilon$ is reduced to the tangent space of the special Euclidean group~$SE(2)$, namely the functions of type $u^\pm(x) = h^\pm + \omega^\pm x^\perp$, where $h^\pm \in \R^2$ and $\omega^\pm \in \R$ are constant. Using $u^+ = 0$ on $\p \Omega$, we deduce $h^+ = 0$ and $\omega^+ = 0$, which also implies $h^- = 0$ and $\omega^- =0$ when we have $u^+ = u^-$ on~$\Gamma(t)$. Therefore, the first Korn's inequality combined with the Rellich-Kondrachov theorem and the Petree-Tartar lemma yields the following general result:
\setcounter{lemma}{-1}
\begin{lemma} \label{lemma-Korn}
Let $\Omega^\pm$ be any smooth subdomains of $\Omega$ split by a closed smooth curve $\Gamma = \p \Omega^-$ such that $\Gamma \subset \mathring{\Omega}$. Then, for all $u^\pm \in \HH^1(\Omega^\pm)$ such that $u^+ = 0$ on $\p \Omega$ and $u^+ = u^-$ on $\Gamma$, we have
\begin{equation*}
\| u^+\|_{\HH^1(\Omega^+)} + \|u^- \|_{\HH^1(\Omega^-)} \leq C \left(
\| \varepsilon(u^+) \|_{\LLL^2(\Omega^+)}^2 + \| \varepsilon(u^-) \|_{\LLL^2(\Omega^-)}^2
\right),
\end{equation*}
where $C>0$ is independent of~$u^\pm$.
\end{lemma}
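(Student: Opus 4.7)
The plan is to follow precisely the strategy sketched by the author just before the statement: apply the classical first Korn inequality on each subdomain separately to bound $\|u^\pm\|_{\HH^1(\Omega^\pm)}^2$ by $\|\varepsilon(u^\pm)\|_{\LLL^2(\Omega^\pm)}^2 + \|u^\pm\|_{\LL^2(\Omega^\pm)}^2$, and then run a Peetre--Tartar / Rellich--Kondrachov absorption argument to eliminate the lower-order $\LL^2$-terms. The homogeneous Dirichlet condition on $\p \Omega$ together with the matching condition across $\Gamma$ are precisely what is needed to rule out the rigid motions lying in the kernel of~$\varepsilon$.

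Concretely, I would introduce the closed subspace
\begin{equation*}
X := \left\{ (u^+,u^-) \in \HH^1(\Omega^+) \times \HH^1(\Omega^-) \ : \ u^+_{|\p \Omega} = 0, \ u^+_{|\Gamma} = u^-_{|\Gamma} \right\},
\end{equation*}
equipped with the product $\HH^1$ norm, together with $Y := \LLL^2(\Omega^+) \times \LLL^2(\Omega^-)$ and $Z := \LL^2(\Omega^+) \times \LL^2(\Omega^-)$. The map $A_1(u^+,u^-) := (\varepsilon(u^+), \varepsilon(u^-))$ is continuous from $X$ to $Y$, the canonical embedding $A_2 : X \hookrightarrow Z$ is compact by Rellich--Kondrachov applied in each subdomain, and the first Korn inequality applied componentwise yields the standard bound $\|\cdot\|_X \leq C \bigl( \|A_1 \cdot\|_Y + \|A_2 \cdot\|_Z \bigr)$. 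The Peetre--Tartar lemma then asserts that, provided $\ker A_1 = \{0\}$ on $X$, one may drop the $\|A_2 \cdot\|_Z$ contribution, which is exactly the announced inequality.

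The crux is therefore the kernel analysis. Suppose $\varepsilon(u^+) = 0$ and $\varepsilon(u^-) = 0$. By the description of $\ker \varepsilon$ recalled in the paragraph preceding the lemma, one has $u^\pm(x) = h^\pm + \omega^\pm x^\perp$ for some $h^\pm \in \R^2$ and $\omega^\pm \in \R$. Vanishing of the affine map $x \mapsto h^+ + \omega^+ x^\perp$ at any two distinct points of the closed curve $\p \Omega$ already forces $\omega^+ = 0$ and $h^+ = 0$, since $(x_1-x_2)^\perp \neq 0$ whenever $x_1 \neq x_2$. The transmission condition on $\Gamma$ then reads $u^- \equiv 0$ on $\Gamma$, and the same one-line argument yields $h^- = 0$ and $\omega^- = 0$; hence $\ker A_1 = \{0\}$ and Peetre--Tartar closes the proof. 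I do not expect any genuine analytic obstacle: the only book-keeping care is to check that $X$ is indeed closed in $\HH^1(\Omega^+) \times \HH^1(\Omega^-)$, which is immediate from continuity of the trace operators $\HH^1(\Omega^\pm) \to \HH^{1/2}(\Gamma)$ and $\HH^1(\Omega^+) \to \HH^{1/2}(\p \Omega)$, so the whole argument really reduces to the elementary kernel computation above.
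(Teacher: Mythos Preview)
Your proposal is correct and follows exactly the approach the paper indicates: the paper does not give a separate proof block for this lemma but sketches precisely this argument in the sentence and paragraph preceding the statement (first Korn inequality, Rellich--Kondrachov compactness, Peetre--Tartar lemma, together with the kernel computation $u^\pm(x)=h^\pm+\omega^\pm x^\perp$ forced to vanish by the boundary and transmission conditions). You have simply fleshed out those details faithfully.
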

Since our approach is based on the study of the linearized system that involves $\divg_{\Gamma_s} \nabla^{n_s}_{\Gamma_s}(X-\I)$ (see section~\ref{sec-extension2}), we will rather focus on the different differential operators rather than on the mean curvature. The curve~$\Gamma_s$ is considered as a Riemannian manifold, and due to its regularity, we claim that the trace spaces~$\HH^s(\Gamma_s)$ coincide with the definition of Sobolev spaces given on Riemannian manifolds~\cite[section~2.2]{Hebey}. For a circle~$\Gamma_s \subset \mathring{\Omega}$ of given radius~$r$ we adopt the parameterization by arc length
\begin{equation*}
X_s: [0,2\pi r) \ni \xi \mapsto \left(
r\cos (\xi/r) , r\sin (\xi/r)
\right)^T \in \Gamma_s \subset\R^2.
\end{equation*}
Denoting~$\tau_s$ the tangent vector of~$\Gamma_s$, we have
\begin{equation*}
n_s \circ X_s = \left(
\cos (\xi/r) , \sin (\xi/r)
\right)^T, \quad
\tau_s \circ X_s = \left(
\sin (\xi/r) , -\cos (\xi/r)
\right)^T
\end{equation*}
Recall the notation for the tangential gradient $\nabla_{\Gamma_s}$ and the tangential divergence~$\divg_{\Gamma_s}$. In the particular case of a circle, with the parameterization~$X_s$ chosen above, the metric tensor~$\mathfrak{g}_s$ of $\Gamma_s$ is scalar-valued, equal to $|\nabla_{\xi} X_s|_{\R^2}^2 = 1$. Therefore, for all $\varphi \in \mathbf{H}^1(\Gamma_s)$, these operators simply write
\begin{equation*}
\begin{array} {rcl}
(\nabla_{\Gamma_s} \varphi)\circ X_s & = &  \nabla_{\xi} (\varphi \circ X_s), \\
(\divg_{\Gamma_s})\circ X_s & = & \displaystyle (\det \mathfrak{g}_s)^{-1/2} \frac{\p }{\p \xi}\left( (\det \mathfrak{g}_s)^{1/2} (\varphi \circ X_s) \right)
= \nabla_{\xi}(\varphi\circ X_s) = (\nabla_{\Gamma_s} \varphi)\circ X_s.
\end{array}
\end{equation*}
We will still use the general notation~$\nabla_{\Gamma_s}$ and~$\divg_{\Gamma_s}$, for the sake of consistency with higher dimension. Recall the Frenet-Serret formulas:
\begin{equation*}
\nabla_{\Gamma_s} n_s = -\frac{1}{r}\tau_s, \quad 
\nabla_{\Gamma_s} \tau_s = \frac{1}{r} n_s.
\end{equation*}
The integrals on $\Gamma_s$ have to be understood as surface integrals. We recall the Stokes formula on smooth manifolds without boundary, that we will use in this article only for the 1-sphere $\Gamma_s$. For all $\varphi, \psi \in \mathbf{H}^{1/2}(\Gamma_s)$, we have
\begin{equation*}
\langle \Delta_{\Gamma_s} \varphi , \psi 
\rangle_{\HH^{-1/2}(\Gamma_s), \HH^{1/2}(\Gamma_s)}  =  
-  \langle \nabla_{\Gamma_s} \varphi , 
\nabla_{\Gamma_s} \psi \rangle_{\mathds{L}^2(\Gamma_s)}, \label{Stokes-mani}
\end{equation*}
where we recall the definition $\Delta_{\Gamma_s} = \divg_{\Gamma_s} \circ \nabla_{\Gamma_s}$ of the Laplace-Beltrami operator on~$\Gamma_s$. More generally, for all matrix field~$\Sigma \in \mathds{H}^1(\Gamma_s)$ and vector field~$\varphi \in \mathbf{H}^1(\Gamma_s)$ we have
\begin{equation}
\langle \divg_{\Gamma_s} \Sigma , \varphi 
\rangle_{\HH^{-1/2}(\Gamma_s), \HH^{1/2}(\Gamma_s)}  =  
-  \langle \Sigma , 
\nabla_{\Gamma_s} \varphi \rangle_{\mathds{L}^2(\Gamma_s)}. \label{Stokes-mani2}
\end{equation}

We define {\it admissible} deformations, summarizing the basic assumptions we consider for mappings~$X$, as well as the set of admissible deformations transforming the circle~$\Gamma_s$ into another circle of the same radius and orientation:
\begin{definition} \label{def-admi}
We say that $X\in \mathbf{H}^2(\Gamma_s)$ is {\it admissible} if $X$ is invertible, orientation-preserving and volume-preserving, that is that the volume contained by $\Gamma_s$ is the same as the one contained by $X(\Gamma_s)$, and if~$X(\Gamma_s) \subset \mathring{\Omega}$. Further, we define 
\begin{equation*}
\Diff = \left\{
X \in \HH^2(\Gamma_s)\mid \ X(\Gamma_s) \subset \mathring{\Omega} \text{ is a circle of the same radius as }\Gamma_s
\right\}.
\end{equation*}
The relation $X(\Gamma_s) \subset \mathring{\Omega}$ is the condition of {\it non-contact} with the outer boundary, that is~$X(\Gamma_s) \cap \p \Omega = \emptyset$.
\end{definition}

\begin{remark} \label{rk-admi}
Note that any volume-preserving deformation $X\in \HH^2(\Gamma_s)$ that is close enough to the identity and volume-preserving is {\it admissible}. Relaxing the non-contact condition, remark that the space~$\Diff$ contains elements of the special Euclidean group $SE(2)$, made of proper rigid transformations $X_R$, namely direct isometries, composed of translations and rotations, as $X_R:\R^2 \ni y \mapsto h + \mathbf{R}y  \in \R^2,$
where $h\in \R^2$ and $\mathbf{R}$ is an orthogonal matrix with $\det \mathbf{R} = 1$. The space~$\Diff$ also includes the group~$\mathrm{Diff}^+(\Gamma_s)$ of direct diffeomorphisms of the circle~$\Gamma_s$. We claim that~$\Diff$ can be generated by composing elements of~$SE(2)$ with elements of~$\mathrm{Diff}(\Gamma_s)$. Finally, we note that we can obtain the circle $X(\Gamma_s)$ globally -- as geometric object -- from $\Gamma_s$ simply by composing the latter by a translation of~$SE(2)$. But for the sake of completeness we introduce~$\Diff$ as above for describing all the possible stationary states.
\end{remark}

We derive the three following lemmas, that will be used several times throughout the paper. The first one characterizes the stationary states, given by system~\eqref{stationary}:

\begin{lemma} \label{lemma-Temam}
Let be subdomains $\Omega^\pm \subset \Omega$ like in Lemma~\ref{lemma-Korn}. If $g\in \mathbf{H}^{-1/2}(\Gamma)$ is the right-hand-side of the following system
\begin{equation}
\left\{ \begin{array} {rcl}
-\divg \sigma (u^\pm, p^\pm) = 0 \quad \text{ and } \quad 
\divg u^\pm = 0 & & \text{in } \Omega^\pm, \\
u^+ = 0 & & \text{on } \p \Omega, \\
u^\pm = 0  \quad \text{ and } \quad  -\left[ \sigma(u,p) \right]n = g
& & \text{on } \Gamma,
\end{array} \right. \label{sys-1st}
\end{equation}
then necessarily $g = c n$, where $c$ is a constant equal to the difference~$[p]$ of constant pressures. In particular, the admissible mappings $X$ such that $X(\Gamma_s)\subset \mathring{\Omega}$ splits $\Omega$ into two subdomains denoted by $\Omega_{X(\Gamma_s)}^\pm = \Omega^\pm$, like in Lemma~\ref{lemma-Korn}, and satisfying
\begin{equation}
\left\{ \begin{array} {rcl}
-\divg \sigma (u^\pm, p^\pm) = 0 \quad \text{ and } \quad 
\divg u^\pm = 0 & & \text{in } \Omega_{X(\Gamma_s)}^\pm, \\
u^+ = 0 & & \text{on } \p \Omega, \\
u^\pm = 0  \quad \text{ and } \quad  -\left[ \sigma(u,p) \right]n = \mu \Delta_{X(\Gamma_s)}\Id
& & \text{on } X(\Gamma_s),
\end{array} \right. \label{stationary}
\end{equation}
describe the set~$\Diff$. That is, $X(\Gamma_s)$ is a circle of the same radius $r>0$ as $\Gamma_s$. The velocities $u^\pm$ are equal to zero everywhere, and the pressures $p^\pm$ are constant, equal to the static pressures $p_s^\pm$ such that $[p_s]n_s = \mu \Delta_{X(\Gamma_s)} \Id = \kappa_s n_s$ (namely the so-called Young-Laplace equation), where $\kappa_s = -1/r<0$ is the curvature of $X(\Gamma_s)$, implying that $p_s^- > p_s^+$.
\end{lemma}

\begin{proof}
Taking the scalar product of the first equation of~\eqref{sys-1st}, and integrating by parts leads to~$\|\varepsilon(u^\pm)\|_{\LLL^2(\Omega_{X(\Gamma_s)}^\pm)} = 0$, and from Lemma~\ref{lemma-Korn}, to $u^\pm = 0$ in $\HH^1(\Omega^\pm)$. Then we also deduce $\nabla p^\pm = 0$ in the first equation, that yields that $p^\pm$ are both constant, equal to the static pressures $p_s^\pm$. Thus $g = \left[2\nu \varepsilon(u) - p \I \right]n = -\left[p_s\right] n$, where~$\left[p_s\right]$ is a constant, which completes the first part of the proof. Next, using this result for system~\eqref{stationary} with $\Omega^\pm = \Omega_{X(\Gamma_s)}^\pm$, $\Gamma= \Gamma_{X(\Gamma_s)}$ and $g = \mu\Delta_{X(\Gamma_s)} \Id$, the constant $\left[p\right]$ obtained previously corresponds to $\left[p\right]n = \mu\Delta_{X(\Gamma_s)} \Id$. Actually $\mu\Delta_{X(\Gamma_s)} \Id = \kappa n$, where $\kappa$, namely the (mean) curvature of~$X(\Gamma_s)$, is constant, equal to $\kappa_s = -1/r$. Therefore~$X(\Gamma_s)$ is a circle, and since $X$ is assumed to be {\it admissible}, this condition yields that $X(\Gamma_s)$ and $\Gamma_s$ have the same radius $r$, and so the same mean curvature $-1/r$. 
Therefore~$X$ lies necessarily in~$\Diff$, and conversely, which completes the proof.
\end{proof}

The stationary states are then obtained from the reference circle~$\Gamma_s$ via transformations of~$\Diff$. The deformations $X$ of $\Gamma_s$ will be then compared to an element $X_c \in \Diff$, and the corresponding displacements writes $X-X_c$. For the sake of simplicity we will rather consider $X-\Id$ in what follows, by keeping in mind that $\Id$ is to be replaced by any given $X_c \in \Diff$. From there, we will use the notation
\begin{equation*}
Z = X- \Id 
\end{equation*}
for the displacements, keeping in mind that ultimately we shall consider $Z=X-X_c$ with~$X_c \in \Diff$. Next we introduce the following differential operator 
\begin{equation*}
\nabla^{n_s}_{\Gamma_s} = (n_s \otimes n_s) \nabla_{\Gamma_s}.
\end{equation*}
Operator $\nabla^{n_s}_{\Gamma_s}$ appears in the linearized system~\eqref{mainsyslin} (see section~\ref{sec-extension2} for its derivation). Note that the matrix field $n_s \otimes n_s$ is never invertible. 

\subsection{The kernel of~$\nabla^{n_s}_{\Gamma_s}$ and the lack of coercivity}
\label{sec-kernel}
The operator~$\divg_{\Gamma_s}\nabla^{n_s}_{\Gamma_s}$ appears in the linearization of $(\kappa n) \circ X$ (see section~\ref{sec-extension2}). The description of the kernel of~$\nabla_{\Gamma_s}$ is then central for the methodology we adopted, namely the wellposedness of the corresponding linearized system (section~\ref{sec-linear}), and the unique continuation argument (section~\ref{sec-approx-cont}). Unfortunately there are non-trivial mappings~$X = Z+\Id$ that are smooth, orientation-preserving, volume-preserving, transforming the circle into a Jordan curve, that can be chosen arbitrarily close to the identity, and such that~$\nabla^{n_s}_{\Gamma_s} Z = 0$.\\
Indeed, $\nabla^{n_s}_{\Gamma_s} Z = (\nabla_{\Gamma_s}Z \cdot n_s)n_s= 0$ implies that $\nabla_{\Gamma_s} Z$ is tangent to~$\Gamma_s$. There exists a function $\xi \mapsto \alpha(\xi)$ such that $\nabla_{\Gamma_s} Z = \alpha \tau_s$. Decompose~$\alpha$ with its Fourier series:
\begin{equation*}
\alpha(\xi) = \sum_{k=2}^{\infty} \big(
a_{1,k} \cos(k\xi/r) + a_{2,k} \sin(k\xi/r)
\big).
\end{equation*}
The modes $k=0$ and $k=1$ are not considered, as they introduce constants that are necessarily equal to~$0$, because of the periodicity of~$Z$, meaning that $(\Id+Z)(\Gamma_s)$ is a closed curve ($Z(X_s(0)) = Z(X_s(2\pi r))$). Integrating the equality~$\nabla_{\Gamma_s} Z = \alpha \tau_s$ leads us to
\begin{equation*}
Z(X_s(\xi)) =  \sum_{k=2}^{\infty} \frac{1}{k^2-1}
\Big(
\big(
a_{1,k} \cos(k\xi/r) + a_{2,k} \sin(k\xi/r)
\big) n_s +
\big(
-a_{2,k}k \cos(k\xi/r) + a_{1,k}k \sin(k\xi/r)
\big)\tau_s
\Big),
\end{equation*}
up to a constant that corresponds to a translation. Several examples of such displacements are represented in Figure~\ref{fig-bubble} below, corresponding to $k \in \{2,3,4,5,6,7\}$ and coefficients $a_k, b_k \in \{0,1\}$.\\
\begin{minipage}{0.96\linewidth}
\centering
\begin{tabular} {c|c|c|c|c|c}
\hspace*{0.0cm}
\begin{minipage}{0.16\linewidth}
\begin{figure}[H]
\includegraphics[trim = 5cm 9cm 5cm 9cm, clip, scale=0.14]{./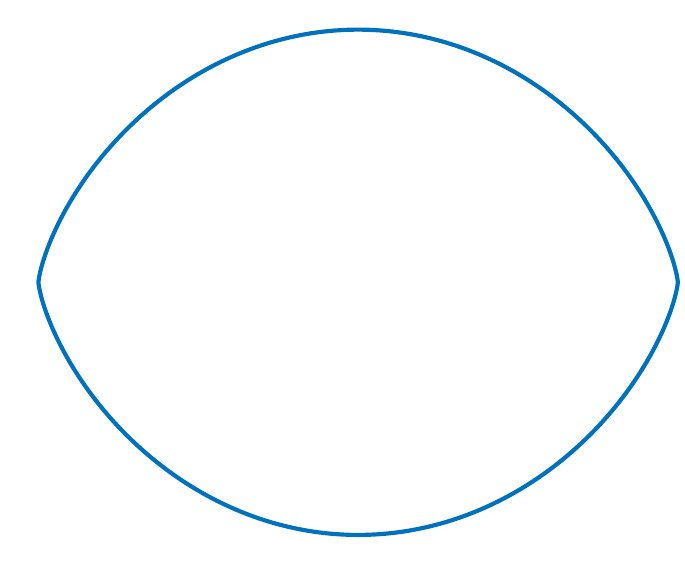}
\end{figure}
\end{minipage}\hspace*{-15pt}
&
\hspace*{5pt}
\begin{minipage}{0.16\linewidth}
\begin{figure}[H]
\includegraphics[trim = 5cm 9cm 5cm 9cm, clip, scale=0.16]{./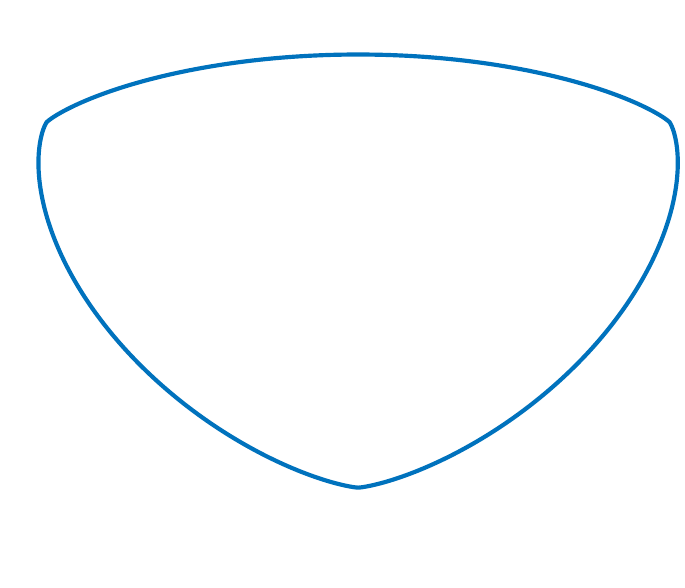}
\end{figure}
\end{minipage}\hspace*{-10pt}
&
\begin{minipage}{0.16\linewidth}
\begin{figure}[H]
\includegraphics[trim = 5cm 9cm 5cm 9cm, clip, scale=0.20]{./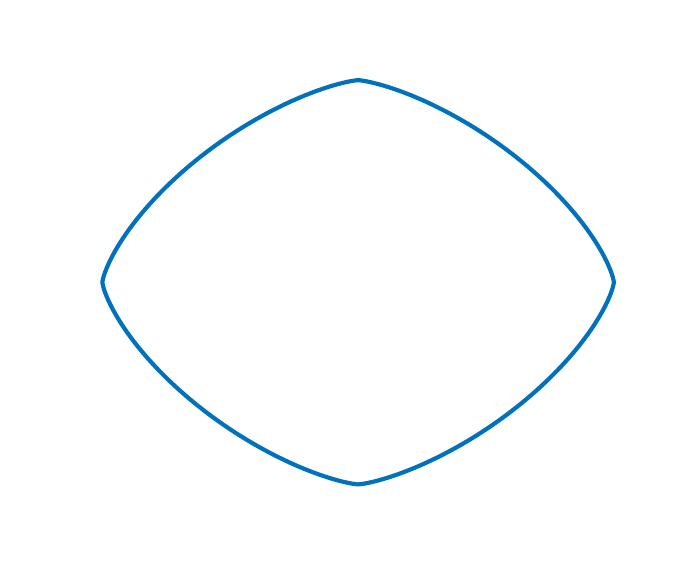}
\end{figure}
\end{minipage}
&
\hspace*{5pt}
\begin{minipage}{0.16\linewidth}
\begin{figure}[H]
\includegraphics[trim = 5cm 9cm 5cm 9cm, clip, scale=0.14]{./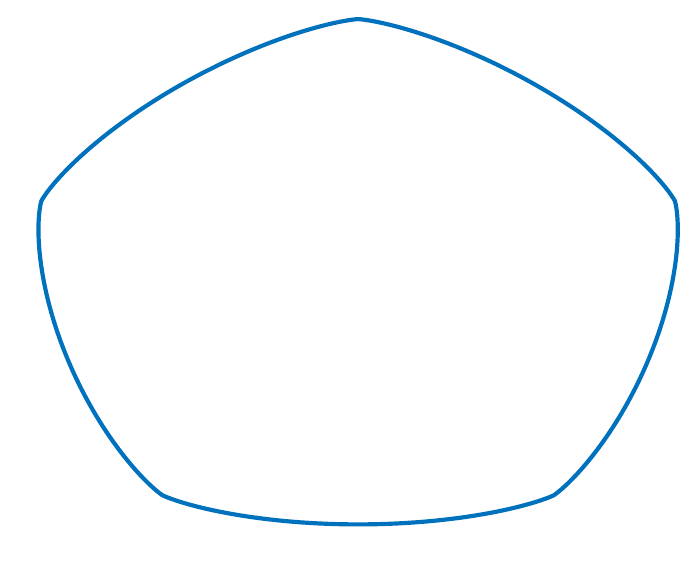}
\end{figure}
\end{minipage}\hspace*{-15pt}
&
\hspace*{5pt}
\begin{minipage}{0.16\linewidth}
\begin{figure}[H]
\includegraphics[trim = 5cm 9cm 4.5cm 9cm, clip, scale=0.14]{./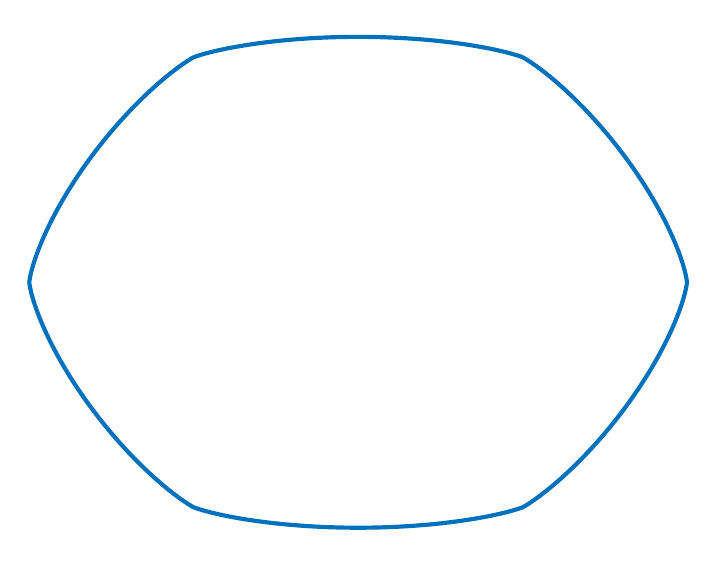}
\end{figure}
\end{minipage}\hspace*{-15pt}
&
\hspace*{5pt}
\begin{minipage}{0.16\linewidth}
\begin{figure}[H]
\includegraphics[trim = 5cm 9cm 5cm 9cm, clip, scale=0.14]{./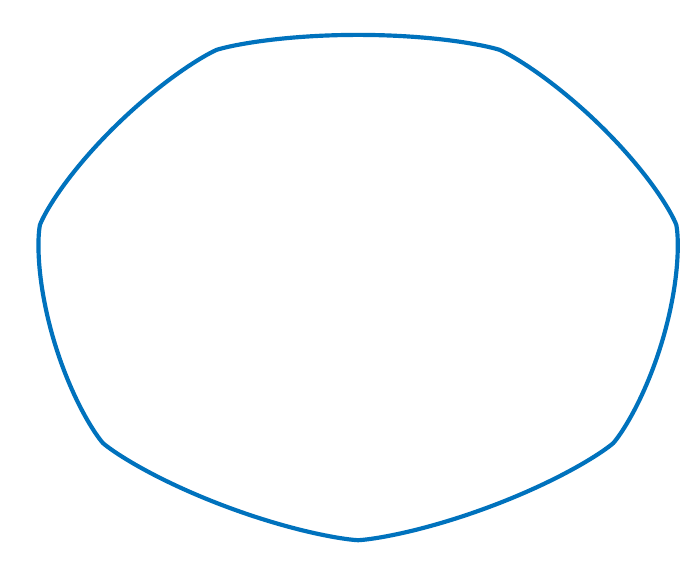}
\end{figure}
\end{minipage}
\end{tabular}
\vspace*{-10pt}
\begin{figure}[H]
\caption{Different deformations $X(\Gamma_s)$ of the circle such that~$\nabla^{n_s}_{\Gamma_s} (X-\Id) =0$.\label{fig-bubble}}
\end{figure}
\end{minipage}\vspace*{5pt}\\ 
In order to define a linear operator whose the kernel is reduced to a trivial set, we need to regularize the operator~$\divg_{\Gamma_s}\nabla^{n_s}_{\Gamma_s}$.

\subsection{Regularization}
Instead of considering~$\mu\divg_{\Gamma_s}\nabla^{n_s}_{\Gamma_s}$ alone, we add a regularizing term, namely
\begin{equation*}
\mu\divg_{\Gamma_s}\nabla^{\tau_s}_{\Gamma_s} = \mu \divg_{\Gamma_s} \circ \big( (\tau_s \otimes \tau_s) \nabla_{\Gamma_s} \big),
\end{equation*}
so that the linear system studied in section~\ref{sec-linear} involves the following operator
\begin{equation*}
Z\mapsto -\mu\divg_{\Gamma_s} \nabla^{n_s}_{\Gamma_s} Z 
-\mu\divg_{\Gamma_s} \nabla^{\tau_s}_{\Gamma_s} Z = -\mu\divg_{\Gamma_s} \nabla_{\Gamma_s} Z = -\mu\Delta_{\Gamma_s} Z.
\end{equation*}
For the Laplace-Beltrami operator~$-\Delta_{\Gamma_s}$ we recall a rigidity result combined with G\r{a}rding-type inequalities.
\begin{proposition} \label{prop-rigidity}
Assume that~$Z\in \mathbf{H}^s(\Gamma_s)$ with $s\geq 1$ satisfies~$\Delta_{\Gamma_s} Z =0$. Then $Z$ is a constant of~$\R^2$. Moreover, the following estimates hold:
\begin{eqnarray}
\|Z\|_{\mathbf{H}^1(\Gamma_s)/\R^2}  \leq C
\|\nabla_{\Gamma_s} Z\|_{\mathds{L}^2(\Gamma_s)}, & &
\label{eq-rigidity} \\
\|Z\|_{\mathbf{H}^{5/2}(\Gamma_s)/\R^2}  \leq C
\|\nabla_{\Gamma_s} Z\|_{\mathds{H}^{3/2}(\Gamma_s)},& & 
\|Z\|_{\mathbf{H}^{3/2}(\Gamma_s)/\R^2}  \leq C
\|\nabla_{\Gamma_s} Z\|_{\mathds{H}^{1/2}(\Gamma_s)}.
\label{eq-rigidity2}
\end{eqnarray}
Furthermore, if~$Z\in \mathbf{H}^2(\Gamma_s)$, the following estimate holds:
\begin{equation}
\|Z\|_{\mathbf{H}^2(\Gamma_s)/\R^2}  \leq C
\|\Delta_{\Gamma_s}Z \|_{\LL^2(\Gamma_s)}.
\label{eq-garding}
\end{equation}
\end{proposition}

\begin{proof}
See for example~\cite[section~2.8]{Hebey}. Estimate~\eqref{eq-rigidity} is deduced from the Poincar\'e inequality combined with the Petree-tartar lemma. The same inequality applies to high-order derivatives of~$Z$, and consequently estimates~\eqref{eq-rigidity2} are deduced by interpolation. Finally, estimate~\eqref{eq-garding} is classically obtained by using Fourier series on the circle.
\end{proof}

We deduce a unique continuation result that is used for proving Proposition~\ref{prop-approx-cont}.
\begin{lemma} \label{lemma-Temam2}
The mappings $X \in \HH^1(\Gamma_s)$ satisfying the following system
\begin{equation}
\left\{ \begin{array} {rcl}
-\divg \sigma (u^\pm, p^\pm) = 0 \quad \text{ and } \quad 
\divg u^\pm = 0 & & \text{in } \Omega_s^\pm, \\
u^+ = 0 & & \text{on } \p \Omega, \\
u^\pm = 0  \quad \text{ and } \quad  -\left[ \sigma(u,p) \right]n = \mu \Delta_{\Gamma_s} (X-\Id)
& & \text{on } \Gamma_s,
\end{array} \right. \label{sys-2nd}
\end{equation}
are translations of $\R^2$.
\end{lemma}

\begin{proof}
By using Lemma~\ref{lemma-Temam} with $\Omega^\pm = \Omega_s^\pm$, $\Gamma= \Gamma_s$ and $g = \mu \Delta_{\Gamma_s} (X-\Id)$, we deduce $\Delta_{\Gamma_s} (X-\Id) = cn_s$, where $c$ is a constant equal to the difference of constant pressures. Recall that the static pressures introduced in Lemma~\ref{lemma-Temam} satisfy $\mu \Delta_{\Gamma_s} \Id = [p_s]n_s$. Therefore we can assume that the constant pressures mentioned above are such that $c = 0$, and so $\Delta_{\Gamma_s} (X-\Id) = 0$. Since $\Gamma_s$ is a compact manifold, it follows that $X-\Id$ is a constant, and $X$ is a translation.
\end{proof}

\begin{remark}
In the proof of Lemma~\ref{lemma-Temam2}, another argument for deducing $c=0$ would have consisted in restricting the deformation~$X$ to volume-preserving deformations. Since $n_s = -r \Delta_{\Gamma_s} \Id$, we would have deduced $\Delta_{\Gamma_s} (X-\Id+cr\Id) = 0$, implying that $X+ (cr-1)\Id$ is a constant. Assuming that $X$ is volume-preserving then leads to $c=0$. This also amounts saying that the difference of static pressures remains the same after deformation by~$X$.
\end{remark}


The third lemma is an energy estimate that holds for an unsteady linear system:
\begin{lemma} \label{lemma-Energy}
Let be $T>0$ and $\lambda >0$. Assume that $Z$ satisfies
\begin{equation}
\left\{ \begin{array} {rcl}
-\divg \sigma (u^\pm, p^\pm) = 0 \quad \text{ and } \quad 
\divg u^\pm = 0 & & \text{in } \Omega_s^\pm \times (0,T), \\
u^+ = 0 & & \text{on } \p \Omega \times (0,T), \\
u^\pm = \displaystyle \frac{\p Z}{\p t} -\lambda Z  \quad \text{ and } \quad  -\left[ \sigma(u,p) \right]n = \mu \Delta_{\Gamma_s} Z + g
& & \text{on } \Gamma_s \times (0,T), \\
Z(\cdot,0) = Z_0 & & \text{on } \Gamma_s.
\end{array} \right. \label{sys-energy}
\end{equation}
Then the following identity holds almost everywhere in $(0,T)$:
\begin{equation}
\frac{\mu}{2}\frac{\p }{\p t} 
\| \nabla_{\Gamma_s} Z \|_{\mathds{L}^2(\Gamma_s)}^2
+ 2\nu \left( 
\|\varepsilon(u^+) \|_{\LLL^2(\Omega_s^+)}^2 + \|\varepsilon(u^-) \|_{\LLL^2(\Omega_s^-)}^2\right) =
\lambda \|\nabla_{\Gamma_s} Z \|_{\mathds{L}^2(\Gamma_s)}^2
+ \left\langle g ; u^\pm \right\rangle_{\LL^2(\Gamma_s)}.
\label{energy-estimate}
\end{equation}
In particular, if $g=0$ and $Z_0 = 0$ in $\HH^1(\Gamma_s)$, then $Z\equiv 0$ up to a constant of~$\R^2$.
\end{lemma}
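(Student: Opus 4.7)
The identity~\eqref{energy-estimate} is a standard fluid-structure energy balance. I would pair the Stokes momentum equation $-\divg\sigma(u^\pm,p^\pm)=0$ with $u^\pm$ and integrate by parts over each subdomain $\Omega_s^\pm$. Incompressibility $\divg u^\pm=0$ and the symmetry of $\varepsilon$ turn the bulk integral into the viscous dissipation $2\nu\,\|\varepsilon(u^\pm)\|^2_{\LLL^2(\Omega_s^\pm)}$; the boundary integral on $\p\Omega$ vanishes because $u^+=0$ there, and on $\Gamma_s$ the two subdomain contributions (using the outward normals $n^+=-n$ and $n^-=n$) assemble, via the continuity $u^+=u^-$, into the single interface term $-\langle[\sigma(u,p)]n,\,u^\pm\rangle_{\LL^2(\Gamma_s)}$. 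Substituting the transmission condition $-[\sigma(u,p)]n=\mu\Delta_{\Gamma_s}Z+g$ rewrites this as $\mu\langle\Delta_{\Gamma_s}Z,u^\pm\rangle_{\LL^2(\Gamma_s)}+\langle g,u^\pm\rangle_{\LL^2(\Gamma_s)}$. The Stokes formula on the curve~\eqref{Stokes-mani} then transforms the first summand into $-\mu\langle\nabla_{\Gamma_s}Z,\nabla_{\Gamma_s}u^\pm\rangle_{\LLL^2(\Gamma_s)}$, and the interface kinematic condition $u^\pm|_{\Gamma_s}=\p_t Z$ identifies this with $-\tfrac{\mu}{2}\,\p_t\|\nabla_{\Gamma_s}Z\|^2_{\LL^2(\Gamma_s)}$. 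Moving this time derivative to the left-hand side yields~\eqref{energy-estimate}.

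For the concluding assertion, I would integrate~\eqref{energy-estimate} on $(0,t)$ with $g\equiv 0$, obtaining
\begin{equation*}
\tfrac{\mu}{2}\,\|\nabla_{\Gamma_s}Z(t)\|^2_{\LL^2(\Gamma_s)}+2\nu\int_0^t\!\bigl(\|\varepsilon(u^+)\|^2_{\LLL^2(\Omega_s^+)}+\|\varepsilon(u^-)\|^2_{\LLL^2(\Omega_s^-)}\bigr)\,ds=\tfrac{\mu}{2}\,\|\nabla_{\Gamma_s}Z_0\|^2_{\LL^2(\Gamma_s)}.
\end{equation*}
Since $Z_0$ is the zero class in $\HH^1(\Gamma_s)/SE(2)$, I select the zero representative of that class so that the right-hand side vanishes. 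Both non-negative terms on the left must then vanish separately: $\nabla_{\Gamma_s}Z(t)\equiv 0$ a.e.\ in $(0,T)$, and $\varepsilon(u^\pm)\equiv 0$ a.e.\ in time. Lemma~\ref{lemma-Korn}, combined with $u^+=0$ on $\p\Omega$ and $u^+=u^-$ on $\Gamma_s$, then forces $u^\pm\equiv 0$ a.e.\ in time; the interface kinematic condition gives $\p_t Z\equiv 0$ on $\Gamma_s$, hence $Z\equiv Z_0\equiv 0$, which is the zero element of $SE(2)$.

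The subtlety to watch is that $\|\nabla_{\Gamma_s}Z\|_{\LL^2(\Gamma_s)}$ is not invariant under the full $SE(2)$-action: $\nabla_{\Gamma_s}$ annihilates only the translational component, whereas the rotation $x\mapsto x^\perp$ has nonzero tangential gradient on $\Gamma_s$. Consequently the integrated energy identity does not by itself control the quotient norm of $Z_0$. The clean way to close the argument is precisely to exploit the quotient formulation of the problem and pick the zero representative inside the class $[Z_0]=0$; once this is done, the right-hand side vanishes outright and the Korn-based Lemma~\ref{lemma-Korn} delivers the conclusion without further effort.
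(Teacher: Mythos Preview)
Your proof is correct and follows the same approach as the paper: test the Stokes equations with $u^\pm$, integrate by parts, invoke the transmission and kinematic conditions together with the Stokes formula~\eqref{Stokes-mani}, and then integrate in time for the uniqueness clause. Your treatment of the second assertion is actually more detailed than the paper's (which simply records $\|\nabla_{\Gamma_s}Z\|_{\LL^2(\Gamma_s)}^2 \leq \|\nabla_{\Gamma_s}Z_0\|_{\LL^2(\Gamma_s)}^2$ and stops), and you correctly flag the $SE(2)$-rotation subtlety that the paper leaves implicit.
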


\begin{proof}
Taking the scalar product of the first equation of~\eqref{sys-energy} by $u^\pm$, and integrating by parts yields
\begin{equation*}
2\nu \left( 
\|\varepsilon(u^+) \|_{\LLL^2(\Omega_s^+)}^2 + 
\|\varepsilon(u^-) \|_{\LLL^2(\Omega_s^-)}^2\right)
= \mu \left\langle \frac{\p Z}{\p t}-\lambda Z , \Delta_{\Gamma_s} Z \right\rangle_{\LL^2(\Gamma_s)} 
+\left\langle g , u^\pm \right\rangle_{\LL^2(\Gamma_s)}.
\end{equation*}
The Stokes formula~\eqref{Stokes-mani2} shows that $\divg_{\Gamma_s}^{\ast} =-\nabla_{\Gamma_s}$, so we deduce
\begin{equation*}
\begin{array} {rcl}
\displaystyle 
\left\langle \frac{\p Z}{\p t}-\lambda Z ,\mu \Delta_{\Gamma_s} Z \right\rangle_{\LL^2(\Gamma_s)} & = & - \displaystyle 
\left\langle \frac{\p \nabla_{\Gamma_s} Z}{\p t}-\lambda \nabla_{\Gamma_s}Z , 
\mu\nabla_{\Gamma_s} Z \right\rangle_{\mathds{L}^2(\Gamma_s)} \\
& = & \displaystyle 
-\frac{\mu}{2}\frac{\p }{\p t}
 \|  \nabla_{\Gamma_s}Z \|^2_{\mathds{L}^2(\Gamma_s)} 
+ \lambda \|  \nabla_{\Gamma_s}Z \|^2_{\mathds{L}^2(\Gamma_s)},
\end{array}
\end{equation*}
and thus~\eqref{energy-estimate} follows. By integrating in time this identity, with $g=0$, we obtain $ \| \nabla_{\Gamma_s} Z \|_{\mathds{L}^2(\Gamma_s)}^2 \leq  \| \nabla_{\Gamma_s} Z_0 \|_{\mathds{L}^2(\Gamma_s)}^2 = 0$, which yields $\nabla_{\Gamma_s}Z=0$, and Proposition~\ref{prop-rigidity} enables us to complete the proof. 
\end{proof}
Note that the energy estimate~\eqref{energy-estimate} is also valid for any geometric configuration satisfying the non-contact condition, but we will use it only for $\Gamma_s$.

\section{Extension of diffeomorphisms and change of variables}

In order to rewrite~\eqref{mainsys} in time-independent domains, we need to define in the whole domain $\Omega$ a change of variable that coincides with $X$ on $\Gamma_s$. Therefore the question of the extension of~$X$ in~$\Omega_s^\pm$ arises.

\subsection{Extension of diffeomorphisms} \label{sec-extension1}

Let us define an extension of $X$ which inherits its regularity properties. We state the following result:

\begin{proposition} \label{prop-extension}
Let be $X-\Id\in \mathcal{Z}_{\infty}(\Gamma_s)$ such that for all $t\geq 0$ the mapping $X(\cdot,t)$ is a diffeomorphism from~$\Gamma_s$ onto~$\Gamma(t)$. There exists a mapping~$\tilde{X}$ defined in $\Omega \times (0,\infty)$ such that for all $t \geq 0$ we have $\tilde{X}(\cdot,t)_{| \Omega_s^\pm} \in \HH^{5/2}(\Omega_s^\pm)$, $\tilde{X}_{|\p \Omega} = \Id$, $\tilde{X}_{|\Gamma_s} = X$, $\tilde{X}(\cdot,t)_{| \Omega_s^-}$ is a diffeomorphism from $\Omega_s^-$ onto $\Omega^-(t)$, and that satisfies
\begin{equation}
\|\tilde{X} - \Id \|_{\mathcal{X}_{\infty}(\Omega_s^\pm)} \leq C
\|X - \Id \|_{\mathcal{Z}_{\infty}(\Gamma_s)},
\label{est-extension1}
\end{equation}
where the constant $C>0$ is independent of~$X$. Furthermore, given $X_1-\Id, \, X_2-\Id \in \mathcal{Z}_{\infty}(\Gamma_s)$, the respective extensions $\tilde{X}_1$ and $\tilde{X}_2$ so obtained satisfy
\begin{equation}
\|\tilde{X}_1 - \tilde{X}_2 \|_{\mathcal{X}_{\infty}(\Omega_s^\pm)} \leq C
\|X_1 - X_2 \|_{\mathcal{Z}_{\infty}(\Gamma_s)}.
\label{est-extension2}
\end{equation}
\end{proposition}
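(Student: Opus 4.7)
The natural candidate is the componentwise harmonic extension of the displacement $Z=X-\Id$. For each $t\geq 0$, define $\tilde Z(\cdot,t)=\tilde X(\cdot,t)-\Id$ as the solution of
\begin{equation*}
-\Delta \tilde Z(\cdot,t) = 0 \;\text{ in }\; \Omega_s^-\cup\Omega_s^+,\qquad \tilde Z(\cdot,t)_{|\Gamma_s}=Z(\cdot,t),\qquad \tilde Z(\cdot,t)_{|\p\Omega}=0,
\end{equation*}
which decouples into a Dirichlet problem on the disk $\Omega_s^-$ and one on the annular region $\Omega_s^+$. The boundary data on $\Gamma_s$ is well-defined after fixing a representative modulo $SE(2)$, using the embedding $\mathcal{Z}_\infty(\Gamma_s)\hookrightarrow \mathcal{C}_b([0,\infty);\HH^2(\Gamma_s)/SE(2))$ recalled in section~\ref{sec-notation}.

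\textbf{Regularity and the estimate \eqref{est-extension1}.} Classical elliptic regularity for the Laplacian on the smooth domains $\Omega_s^\pm$ with $\HH^2$ Dirichlet data gives $\tilde Z(\cdot,t)\in \HH^{5/2}(\Omega_s^\pm)$ together with a continuous estimate
\begin{equation*}
\|\tilde Z(\cdot,t)\|_{\HH^{5/2}(\Omega_s^\pm)}\leq C\,\|Z(\cdot,t)\|_{\HH^2(\Gamma_s)/SE(2)}.
\end{equation*}
Taking the essential supremum in $t$ yields \eqref{est-extension1}. The Lipschitz-type bound \eqref{est-extension2} is then immediate: the mapping $Z\mapsto \tilde Z$ is \emph{linear}, so $\tilde X_1-\tilde X_2$ is the harmonic extension of $X_1-X_2$, and the same elliptic estimate applies to the difference.

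\textbf{Diffeomorphism property — the main obstacle.} Since in dimension two $5/2>1+d/2$, we have $\HH^{5/2}(\Omega_s^\pm)\hookrightarrow \mathcal{C}^1(\overline{\Omega_s^\pm})$. Combining this with \eqref{est-extension1} and the smallness of $\|X-\Id\|_{\mathcal{Z}_\infty(\Gamma_s)}$ makes $\|\nabla \tilde Z(\cdot,t)\|_{\mathcal{C}^0(\overline{\Omega_s^\pm})}$ arbitrarily small, so that $\nabla \tilde X(\cdot,t)=I+\nabla\tilde Z(\cdot,t)$ is pointwise invertible with positive determinant, giving local invertibility and orientation preservation. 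The delicate point is \emph{global} injectivity. On the inner disk $\Omega_s^-$, the boundary trace $\tilde X(\cdot,t)_{|\Gamma_s}=X(\cdot,t)$ is by assumption a homeomorphism of $\Gamma_s$ onto the Jordan curve $\Gamma(t)$, which bounds the nearly-convex domain $\Omega^-(t)$; the Radó-Kneser-Choquet theorem (in its $\HH^{5/2}$-refined version collected in Appendix~\ref{appendix1}) then guarantees that the harmonic extension is a diffeomorphism of $\Omega_s^-$ onto $\Omega^-(t)$. On the annular component $\Omega_s^+$, convexity of the target fails and a direct appeal to Radó-Kneser-Choquet is not available; this is the principal difficulty and the reason why dimension two is needed (cf.\ section~\ref{sec-dim3}). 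The argument proceeds by a perturbative/degree-theoretic reasoning: the boundary map equals $\Id$ on $\p\Omega$ and is $\mathcal{C}^1$-close to $\Id$ on $\Gamma_s$, so the harmonic extension is $\mathcal{C}^1$-close to $\Id$ globally; local invertibility plus the correct boundary behaviour on the two components of $\p\Omega_s^+$ yields, via the results collected in Appendix~\ref{appendix1}, that $\tilde X(\cdot,t)$ is a diffeomorphism of $\Omega_s^+$ onto $\Omega^+(t)$. Gluing the two pieces, together with the identity $\tilde X_{|\Gamma_s}=X$, produces the extension claimed in the proposition.
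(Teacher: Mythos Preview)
Your approach is essentially the paper's: harmonic (Dirichlet) extension of the displacement on each subdomain, elliptic regularity for \eqref{est-extension1}, linearity of the extension map for \eqref{est-extension2}, and the Rad\'o--Kneser--Choquet circle of ideas for the diffeomorphism property on $\Omega_s^-$.

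Two remarks. First, you are working harder than the statement asks: the proposition only claims that $\tilde X(\cdot,t)_{|\Omega_s^-}$ is a diffeomorphism onto $\Omega^-(t)$. The paper explicitly contents itself with \emph{local} invertibility of $\tilde X$ on $\Omega_s^+$ (inverse function theorem once $\det\nabla\tilde X>0$), and this is all that is used downstream (cf.\ Corollary~\ref{coro-extension}, where ``$\nabla\tilde Y(\tilde X)$'' merely denotes $(\nabla\tilde X)^{-1}$). Your degree-theoretic argument for global injectivity on the annulus is therefore superfluous. Second, your phrase ``nearly-convex'' is misleading: the classical Rad\'o--Kneser--Choquet theorem requires convexity of the target, which need not hold here. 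The paper's Appendix~\ref{appendix1} invokes instead the Alessandrini--Nesi extension (Lemma~\ref{lemma-RKC}), valid for arbitrary simple closed target curves under the sole condition $\det\nabla\tilde X>0$ on $\Gamma_s$; the latter is secured by smallness of $\|X-\Id\|_{\mathcal Z_\infty(\Gamma_s)}$ via the $\mathcal C^1$ embedding, exactly as you argue.
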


The $\HH^{5/2}(\Omega_s^\pm)$ regularity implies in particular that for every $t\geq0$ the mapping $\tilde{X}( \cdot,t)_{|\Omega_s^\pm}$ is of class $\mathcal{C}^1$ on $\overline{\Omega_s^\pm}$. The proof of Proposition~\ref{prop-extension} combines different results that are not related to the main result of the paper. Therefore it is given in Appendix~\ref{appendix1}. 
Note that the domains $\Omega^\pm(t)$ defined as the two connected components of $\Omega \setminus \Gamma(t)$ are also described as $\Omega^\pm(t) = \tilde{X}(\Omega_s^\pm,t)$, due to conexity. By choosing $X-\Id$ small enough, we can define a local inverse for~$\tilde{X}_{|\Omega_s^+}(\cdot,t)$. We deduce regularity for the inverse of the Jacobian matrix of~$\tilde{X}(\cdot,t)_{| \Omega_s^\pm}$.

\begin{corollary} \label{coro-extension}
Given the assumptions of Proposition~\ref{prop-extension}, the inverse~$\tilde{Y}(\cdot,t)$ of mapping~$\tilde{X}(\cdot,t)_{|\Omega_s^\pm}$, so that
\begin{equation*}
\tilde{X}(\tilde{Y}(x,t),t) = x, \text{ for } x\in \Omega^\pm(t), \ t\in (0,\infty), \qquad
\tilde{Y}(\tilde{X}(y,t),t) = y \text{ for } (y,t) \in \Omega_s^\pm \times (0,\infty),
\end{equation*}
satisfies the following estimate, provided that $\|X - \Id \|_{\mathcal{Z}_{\infty}(\Gamma_s)}$ is small enough:
\begin{equation}
\|\nabla \tilde{Y}(\tilde{X}) - \I \|_{\L^\infty(0,\infty; \mathbb{H}^{3/2}(\Omega_s^\pm))} \leq C
\|X - \Id \|_{\mathcal{Z}_{\infty}(\Gamma_s)}.
 \label{est-extension-inv1}
\end{equation}
Denoting by $\tilde{Y}_1$ and $\tilde{Y}_2$ the respective inverses of $\tilde{X}_1$ and $\tilde{X}_2$, extensions of $X_1$ and $X_2$ respectively, we have
\begin{equation}
\|\nabla \tilde{Y}_1(\tilde{X}_1) - \nabla \tilde{Y}_2(\tilde{X}_2) \|_{\L^\infty(0,\infty; \mathbb{H}^{3/2}(\Omega_s^\pm))} \leq C
\|X_1 - X_2 \|_{\mathcal{Z}_{\infty}(\Gamma_s)},
 \label{est-extension-inv2}
\end{equation}
provided that $\|X_1 - \Id \|_{\mathcal{Z}_{\infty}(\Gamma_s)}$ and $\|X_2 - \Id \|_{\mathcal{Z}_{\infty}(\Gamma_s)}$ are small enough.
\end{corollary}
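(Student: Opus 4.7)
The plan is to reduce the Jacobian-of-the-inverse to an algebraic matrix inverse and then exploit the Banach-algebra structure of $\mathbb{H}^{3/2}(\Omega_s^\pm)$ (recalled via \cite{Grubb}) together with Proposition~\ref{prop-extension}. More precisely, differentiating the identity $\tilde{Y}(\tilde{X}(y,t),t)=y$ in $y$ gives, pointwise in $(y,t)\in\Omega_s^\pm\times(0,\infty)$,
\begin{equation*}
(\nabla \tilde{Y})(\tilde{X}(y,t),t)\,\nabla \tilde{X}(y,t) = \I,
\end{equation*}
so that $\nabla \tilde{Y}(\tilde{X})=(\I+\nabla \tilde{Z})^{-1}$ with $\tilde{Z}:=\tilde{X}-\Id$. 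This transforms the question about the inverse mapping into a question about inverting a matrix field which is a small perturbation of the identity, to which the algebra structure applies directly.

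The first step is to establish pointwise invertibility. By Proposition~\ref{prop-extension}, $\|\tilde{Z}\|_{\mathcal{X}_\infty(\Omega_s^\pm)}\lesssim \|X-\Id\|_{\mathcal{Z}_\infty(\Gamma_s)}$, hence $\nabla \tilde{Z}\in \L^\infty(0,\infty;\mathbb{H}^{3/2}(\Omega_s^\pm))$ with small norm. Since $3/2>1$ in dimension two, Sobolev embedding yields a continuous inclusion $\mathbb{H}^{3/2}(\Omega_s^\pm)\hookrightarrow \mathcal{C}^0(\overline{\Omega_s^\pm})$, so that $\|\nabla \tilde{Z}\|_{L^\infty_t\mathcal{C}^0_x}$ is also small; in particular $\det(\I+\nabla\tilde{Z})\geq 1/2$ uniformly in $(y,t)$, which makes $(\I+\nabla\tilde{Z})(y,t)$ invertible. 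Equivalently, letting $a$ denote the product constant of the algebra $\mathbb{H}^{3/2}(\Omega_s^\pm)$, choosing $\|X-\Id\|_{\mathcal{Z}_\infty(\Gamma_s)}$ small enough to guarantee $a\|\nabla\tilde{Z}\|_{L^\infty_t\mathbb{H}^{3/2}_x}\leq 1/2$, the Neumann series
\begin{equation*}
(\I+\nabla\tilde{Z})^{-1}-\I = \sum_{k\geq 1}(-\nabla\tilde{Z})^k
\end{equation*}
converges in $\L^\infty(0,\infty;\mathbb{H}^{3/2}(\Omega_s^\pm))$ (the constant matrix $\I$ belongs to $\mathbb{H}^{3/2}$ on the bounded domain $\Omega_s^\pm$) and is bounded by a geometric sum in $a\|\nabla\tilde{Z}\|$. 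Combining this with the estimate $\|\tilde{Z}\|_{\mathcal{X}_\infty(\Omega_s^\pm)}\lesssim \|X-\Id\|_{\mathcal{Z}_\infty(\Gamma_s)}$ yields~\eqref{est-extension-inv1}.

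For the Lipschitz estimate~\eqref{est-extension-inv2}, I would use the resolvent-type identity
\begin{equation*}
(\I+\nabla\tilde{Z}_1)^{-1}-(\I+\nabla\tilde{Z}_2)^{-1}
= (\I+\nabla\tilde{Z}_1)^{-1}\bigl(\nabla\tilde{Z}_2-\nabla\tilde{Z}_1\bigr)(\I+\nabla\tilde{Z}_2)^{-1},
\end{equation*}
which is valid pointwise. Applying the algebra property of $\mathbb{H}^{3/2}(\Omega_s^\pm)$ twice, together with the uniform bounds on $(\I+\nabla\tilde{Z}_i)^{-1}$ obtained in the previous step (provided both $\|X_i-\Id\|_{\mathcal{Z}_\infty(\Gamma_s)}$ are small enough), reduces the problem to estimating $\|\nabla(\tilde{Z}_1-\tilde{Z}_2)\|_{L^\infty_t\mathbb{H}^{3/2}_x}$. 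This last quantity is controlled by $\|\tilde{X}_1-\tilde{X}_2\|_{\mathcal{X}_\infty(\Omega_s^\pm)}$, which by the Lipschitz part of Proposition~\ref{prop-extension}~\eqref{est-extension2} is controlled by $\|X_1-X_2\|_{\mathcal{Z}_\infty(\Gamma_s)}$, as required.

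There is no real obstacle beyond booking the constants: the only point requiring care is to choose the smallness threshold for $\|X-\Id\|_{\mathcal{Z}_\infty(\Gamma_s)}$ so that both the pointwise condition $\det(\I+\nabla\tilde{Z})\geq 1/2$ and the Neumann-series convergence in the algebra $\mathbb{H}^{3/2}(\Omega_s^\pm)$ are satisfied uniformly in $t\in(0,\infty)$; this is ensured by the $\L^\infty$-in-time character of the estimate~\eqref{est-extension1} supplied by Proposition~\ref{prop-extension}.
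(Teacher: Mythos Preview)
Your proposal is correct and follows essentially the same approach as the paper: both arguments reduce to the identity $\nabla\tilde{Y}(\tilde{X})=(\nabla\tilde{X})^{-1}$ and exploit that $\mathbb{H}^{3/2}(\Omega_s^\pm)$ is a Banach algebra together with the smallness furnished by Proposition~\ref{prop-extension}. The only cosmetic difference is the algebraic manipulation used: where you invoke the Neumann series and the resolvent identity $(\I+A)^{-1}-(\I+B)^{-1}=(\I+A)^{-1}(B-A)(\I+B)^{-1}$, the paper instead writes the self-referential identities
\[
\nabla\tilde{Y}(\tilde{X})-\I=(\I-\nabla\tilde{X})(\nabla\tilde{Y}(\tilde{X})-\I)+(\I-\nabla\tilde{X})
\]
and an analogous one for the difference $\nabla\tilde{Y}_1(\tilde{X}_1)-\nabla\tilde{Y}_2(\tilde{X}_2)$, absorbing the unknown on the left; the resulting bounds are identical.
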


\begin{proof}
Even if $\tilde{X}_{|\Omega_s^+}(\cdot,t)$ is not globally invertible, we still use the notation $\nabla \tilde{Y}(\tilde{X})$ for the inverse of $\nabla \tilde{X}$, for the sake of simplicity. Recall that $\mathbb{H}^{3/2}(\Omega_s^\pm)$ is an algebra. The identity $\nabla \tilde{Y}(\tilde{X})-\I = (\I - \nabla \tilde{X})(\nabla \tilde{Y}(\tilde{X}) - \I) + (\I - \nabla \tilde{X})$ yields
\begin{equation*}
\| \nabla \tilde{Y}(\tilde{X})-\I \|_{\L^\infty(0,\infty; \mathbb{H}^{3/2}(\Omega_s^\pm))} \leq 
\frac{\| \nabla \tilde{X}-\I \|_{\L^\infty(0,\infty; \mathbb{H}^{3/2}(\Omega_s^\pm))}}{1- C \|\nabla \tilde{X}-\I\|_{\L^\infty(0,\infty; \mathbb{H}^{3/2}(\Omega_s^\pm))}}
\end{equation*}
which, combined with~\eqref{est-extension1}, implies~\eqref{est-extension-inv1}. Further, the identity 
\begin{equation*}
\nabla \tilde{Y}_1(\tilde{X}_1)- \nabla \tilde{Y}_2(\tilde{X}_2) =
\big(\nabla \tilde{Y}_1(\tilde{X}_1) - \nabla \tilde{Y}_2(\tilde{X}_2)\big) (\I - \nabla \tilde{X}_1)
- (\nabla \tilde{Y}_2(\tilde{X}_2) - \I)(\nabla \tilde{X}_1 - \nabla \tilde{X}_2)
-(\nabla \tilde{X}_1 - \nabla \tilde{X}_2)
\end{equation*}
enables us to derive~\eqref{est-extension-inv2} similarly, by using~\eqref{est-extension1} for controlling $(\I-\nabla \tilde{X}_1)$,~\eqref{est-extension-inv1} for controlling~$(\nabla \tilde{Y}_2(\tilde{X}_2) - \I)$, and~\eqref{est-extension2} for controlling $(\nabla \tilde{X}_1 - \nabla \tilde{X}_2)$, completing the proof.
\end{proof}

\begin{remark}
In case we would consider stabilizing $X$ around some $X_c \in \Diff$, instead of $\Id$, we would need to define an extension~$\tilde{X}_c \in \HH^{5/2}(\Omega_s^\pm)$ of $X_c \in \HH^2(\Gamma_s)$. Such an extension is provided by Proposition~\ref{prop-RKC} in Appendix~\ref{appendix1}, and the Lipschitz estimates~\eqref{est-extension1} and~\eqref{est-extension-inv1} would then be
\begin{equation*}
\begin{array} {rcl}
\|\tilde{X} - \tilde{X}_c \|_{\mathcal{X}_{\infty}(\Omega_s^\pm)} \leq C
\|X - X_c \|_{\mathcal{Z}_{\infty}(\Gamma_s)}, & & 
\|\nabla \tilde{Y}(\tilde{X}) - (\nabla \tilde{X}_c)^{-1} \|_{\L^\infty(0,\infty; \mathbb{H}^{3/2}(\Omega_s^\pm))} \leq C
\|X - X_c \|_{\mathcal{Z}_{\infty}(\Gamma_s)},
\end{array}
\end{equation*}
respectively.
\end{remark}

We now have the tools for rewriting system~\eqref{mainsys} in cylindrical domains. The Lipschitz estimates of Proposition~\ref{prop-extension} and Corollary~\ref{coro-extension} are used in section~\ref{sec-nonlinear}.

\subsection{Change of variables and system transformation} \label{sec-extension2}

We introduce the change of variables
\begin{equation*}
\begin{array} {rrl}
\tilde{u}^\pm(y,t) := u^{\pm}(\tilde{X}(y,t),t), &
\tilde{p}^\pm(y,t) := p^{\pm}(\tilde{X}(y,t),t), &
(y,t) \in \Omega_s^\pm \times (0,\infty), \\
u^\pm(x,t) = \tilde{u}^\pm(\tilde{Y}(x,t),t), & 
p^\pm(x,t) = \tilde{p}^\pm(\tilde{Y}(x,t),t), & 
(x,t) \in \displaystyle \bigcup_{t\in (0,\infty)} \Omega^\pm(t) \times \{t\}, \\
& \tilde{f}^\pm(y,t) := (\det \nabla \tilde{X}(y,t) )f^{\pm}(\tilde{X}(y,t),t), 
&  (y,t) \in \Omega_s^\pm \times  (0,\infty), \\
& \tilde{g}(y,t) : = |\cof(\nabla \tilde{X}(y,t))n_s |\,
g(X(y,t),t), &
(y,t) \in  \Gamma_s \times (0,\infty).
\end{array}
\end{equation*}
Implicitly, on $\Gamma_s$, $\nabla \tilde{X}$ refers to~$\nabla \tilde{X}^-$. Composing system~\eqref{mainsys} by $\tilde{X}$ and using the Piola's identity, we obtain
\begin{equation}
\begin{array} {rcl}
-\divg \left(\left(\sigma(u^\pm,p^\pm) \circ \tilde{X}\right) \cof(\nabla \tilde{X}) \right) = \tilde{f}^\pm \quad \text{ and } \quad 
\divg\left( \cof(\nabla \tilde{X})^T \tilde{u} \right) = 0 & & 
\text{in } \Omega_s^\pm \times (0,\infty), \\
\tilde{u}^+ = 0 & &  \text{on } \p \Omega \times (0,\infty), \\
\tilde{u}^\pm = \displaystyle \frac{\p X}{\p t} \quad \text{ and } \quad
-\left[ \left(\sigma(u,p)\circ X\right) \right]\cof \nabla \tilde X n_s = 
\mu |\cof \nabla \tilde X n_s| (\Delta_{\Gamma(t)} \Id) \circ X + \tilde{g} & & \text{on } \Gamma_s \times (0, \infty), \\
X(\cdot,0) = X_0 & & \text{on } \Gamma_s,
\end{array} \label{sys-cyl1}
\end{equation}
where 
\begin{equation*}
\sigma(u,p) \circ \tilde{X} =
\nu\, \mathrm{Sym}(\nabla \tilde{u} \nabla \tilde{Y}(\tilde{X}))  - \tilde{p}\, \I
=: \tilde{\sigma}(\tilde{u},\tilde{p}),
\end{equation*}
and where we used $n\circ X = \cof \nabla \tilde X n_s/|\cof \nabla \tilde X n_s|$. Let us develop $(\Delta_{\Gamma(t)} \Id) \circ X$. The parameterization $X_s : \ [0,2\pi r)  \ni \xi \mapsto X_s(\xi) \in \Gamma_s$ of~$\Gamma_s$ introduced in section~\ref{sec-notation2} enables us to define $X\circ X_s$ as a parameterization of~$\Gamma(t)$. We denote by $\mathfrak{g}_s$,~$\mathfrak{g}(t)$ the metric tensors of $\Gamma_s$ and $\Gamma(t)$, respectively. We have, in local coordinates, and using the Einstein notation\footnote{Even if in our case $\mathfrak{g}(t)$ is scalar-valued, we still keep the matrix formulation for generalizability to higher dimension.}:
\begin{equation}
\begin{array} {rcl}
(\Delta_{\Gamma(t)} \Id)_i \circ  X \circ X_s & = &
(\det \mathfrak{g}(t))^{-1/2} \displaystyle 
\frac{\p}{\p \xi_j} \left( 
(\det \mathfrak{g}(t))^{1/2}
\frac{\p (X\circ X_s)_i}{\p \xi_k} (\mathfrak{g}(t)^{-1})_{kj} \right).
\end{array} \label{Laplace-trans}
\end{equation}
Volume forms considerations yield 
\begin{equation*}
(\det \mathfrak{g}(t))^{1/2} = |\cof \nabla \tilde{X} n_s| (\det \mathfrak{g}_s)^{1/2} 
\end{equation*}
(see for instance~\cite[Lemma~6.23, p.~135]{Allaire}), and when $X-\Id$ is small, we write
\begin{equation*}
\begin{array} {rcl}
\mathfrak{g}(t) & = & \nabla_{\xi} (X\circ X_s)^T \nabla_{\xi}(X\circ X_s) = 
\mathfrak{g}_s + 
 \nabla_{\xi} (X\circ X_s)^T \nabla_{\xi}(X\circ X_s) 
- \nabla_{\xi}X_s^T \nabla_{\xi} X_s \\
& = & \mathfrak{g}_s + 2\, 
\mathrm{Sym}\left(
\nabla_{\xi}X_s^T \nabla_{\xi} ( (X-\Id)\circ X_s)
\right)  + 
\mathcal{O}(\|\nabla \tilde{X}-\I \|^2_{\mathbb{H}^1(\Gamma_s)} ), \\
\mathfrak{g}(t)^{-1} & = & \mathfrak{g}_s^{-1} 
- 2\, \mathfrak{g}_s^{-1}  
\mathrm{Sym}\left(
\nabla_{\xi}X_s^T \nabla_{\xi} ( (X-\Id)\circ X_s)
\right)\mathfrak{g}_s^{-1}   + 
\mathcal{O}(\|\nabla \tilde{X}-\I \|^2_{\mathbb{H}^1(\Gamma_s)} ) \\
& = & \mathfrak{g}_s^{-1} 
- 2\, \mathfrak{g}_s^{-1}  
\left(\nabla_{\xi}X_s^T \nabla_{\xi} ( (X-\Id)\circ X_s)
\right)\mathfrak{g}_s^{-1}   + 
\mathcal{O}(\|\nabla \tilde{X}-\I \|^2_{\mathbb{H}^1(\Gamma_s)} )  \\
\det \mathfrak{g}(t) & = & \det \mathfrak{g}_s +
2\, \cof \mathfrak{g}_s : \mathrm{Sym}\left(
\nabla_{\xi}X_s^T \nabla_{\xi} ( (X-\Id)\circ X_s)
\right) + 
\mathcal{O}(\|\nabla \tilde{X}-\I \|^2_{\mathbb{H}^1(\Gamma_s)} )\\ 
& = & (\det \mathfrak{g}_s)\left( 1+
2\, \mathfrak{g}_s^{-1} : \left(
\nabla_{\xi}X_s^T \nabla_{\xi} ( (X-\Id)\circ X_s)
\right)
\right) + 
\mathcal{O}(\|\nabla \tilde{X}-\I \|^2_{\mathbb{H}^1(\Gamma_s)} )
, \\
(\det \mathfrak{g}(t))^{1/2} & = & (\det \mathfrak{g}_s)^{1/2}\left(
1 + \mathfrak{g}_s^{-1} : \left(
\nabla_{\xi}X_s^T \nabla_{\xi} ( (X-\Id)\circ X_s)
\right)
\right)
+\mathcal{O}(\|\nabla \tilde{X}-\I \|^2_{\mathbb{H}^1(\Gamma_s)} ),\\
\displaystyle \frac{\p (X\circ X_s)_i}{\p \xi_k} & = & (\nabla_{\xi}X_s)_{ik} + 
\displaystyle \frac{\p ((X-\Id)\circ X_s)_i}{\p \xi_k},
\end{array}
\end{equation*}
where we have used the symmetry of~$\mathfrak{g}_s$ and~$\mathfrak{g}(t)$, and where the Landau's notation $\mathcal{O}$ applies when $\nabla \tilde{X} - \I$ is small in the algebra~$\mathbb{H}^{1}(\Gamma_s)$. Further, let us make some simplifications provided by dimension~2, using the parameterizaion by arc length given in section~\ref{sec-notation2}: The metric tensor $\mathfrak{g}_s$ is scalar valued, equal to $| \nabla_{\xi}X_s|^2 = 1$, and the tangent space of~$\Gamma_s$ is made of the tangent vectors $\tau_s$ such that $\tau_s \circ X_s = -\nabla_{\xi}X_s/|\nabla_{\xi} X_s| = - \nabla_{\xi} X_s$. We then have
\begin{equation*}
\begin{array} {rcl}
\mathfrak{g}(t)^{-1} & = & \mathfrak{g}_s^{-1} - 2 \displaystyle
\frac{\langle \nabla_{\xi} X_s, \nabla_{\xi}((X-\Id)\circ X_s) \rangle_{\R^2}}
{|\nabla_{\xi} X_s |^2}
+\mathcal{O}(\|\nabla \tilde{X}-\I \|^2_{\mathbb{H}^1(\Gamma_s)} )\\[10pt] 
& = & \mathfrak{g}_s^{-1} + 2 \displaystyle
\langle \tau_s\circ X_s, \nabla_{\xi}((X-\Id)\circ X_s) \rangle_{\R^2}
+\mathcal{O}(\|\nabla \tilde{X}-\I \|^2_{\mathbb{H}^1(\Gamma_s)} ),\\[10pt]
(\det \mathfrak{g}(t))^{1/2} & = & (\det \mathfrak{g}_s)^{1/2}\left(
1 + \displaystyle
\frac{\langle \nabla_{\xi} X_s, \nabla_{\xi}((X-\Id)\circ X_s) \rangle_{\R^2}}
{|\nabla_{\xi} X_s |^2}
\right)
+\mathcal{O}(\|\nabla \tilde{X}-\I \|^2_{\mathbb{H}^1(\Gamma_s)} )\\[10pt]
& = &
(\det \mathfrak{g}_s)^{1/2}\left(
1 - \displaystyle
\langle \tau_s\circ X_s, \nabla_{\xi}((X-\Id)\circ X_s) \rangle_{\R^2}
\right)
+\mathcal{O}(\|\nabla \tilde{X}-\I \|^2_{\mathbb{H}^1(\Gamma_s)} ),\\
\displaystyle \frac{\p (X\circ X_s)_i}{\p \xi_k} & = & -(\tau_s\circ X_s)_i + 
\displaystyle  \nabla_{\xi}((X-\Id)\circ X_s)_i .
\end{array}
\end{equation*}
Linearizing~\eqref{Laplace-trans}, we deduce
\begin{equation*}
\begin{array}{rcl}
(\Delta_{\Gamma(t)} \Id) \circ (X\circ X_s) & = & 
|\cof \nabla \tilde{X} n_s|^{-1}\big(\Delta_{\Gamma_s} \Id + 
\divg_{\Gamma_s}(\nabla_{\Gamma_s} (X-\Id)) \big) \circ X_s \\
& & \left.
-|\cof \nabla \tilde{X} n_s|^{-1} (\det \mathfrak{g}_s)^{-1/2}\displaystyle
\frac{\p }{\p \xi}\left(
(\det \mathfrak{g}_s)^{1/2} \displaystyle
\big((\tau_s \otimes
\tau_s)\circ X_s\big) \nabla_{\xi}((X-\Id)\circ X_s)
\right)
\right) \\
& & +\mathcal{O}(\|\nabla \tilde{X}-\I \|^2_{\mathbb{H}^1(\Gamma_s)} )\\
& = & |\cof \nabla \tilde{X} n_s|^{-1} \Big(
\Delta_{\Gamma_s} \Id + 
\divg_{\Gamma_s}(\nabla_{\Gamma_s} (X-\Id))
- \divg_{\Gamma_s}((\tau_s \otimes \tau_s)\nabla_{\Gamma_s} (X-\Id))
\Big)\circ X_s \\
& & +\mathcal{O}(\|\nabla \tilde{X}-\I \|^2_{\mathbb{H}^1(\Gamma_s)})
,\\
(\Delta_{\Gamma(t)} \Id) \circ X & = &
|\cof \nabla \tilde{X} n_s|^{-1}\big(\Delta_{\Gamma_s} \Id + 
\divg_{\Gamma_s}\left((n_s \otimes n_s)\nabla_{\Gamma_s} (X-\Id)\right) \big)
+\mathcal{O}(\|\nabla \tilde{X}-\I \|^2_{\mathbb{H}^1(\Gamma_s)}),
\end{array}
\end{equation*}
as $ n_s \otimes n_s= \I - \tau_s \otimes \tau_s$ is the projection operator on $n_s$. Thus, recalling the notation $\nabla^{n_s}_{\Gamma_s} = (n_s \otimes n_s) \nabla_{\Gamma_s}$, the fourth equation of~\eqref{sys-cyl1} writes 
\begin{equation*}
\begin{array} {rcl}
-\left[ \tilde{\sigma}(\tilde{u},\tilde{p}) \right]\cof \nabla \tilde X n_s = 
\mu  \Delta_{\Gamma_s} \Id  + \divg_{\Gamma_s} \nabla^{n_s}_{\Gamma_s}(X-\Id)
+ \tilde{g} 
+\mathcal{O}(\|\nabla \tilde{X}-\I \|^2_{\mathbb{H}^1(\Gamma_s)} )
& & \text{on } \Gamma_s \times (0, \infty).
\end{array}
\end{equation*}
The interest of~\eqref{sys-cyl1} lies in the fact that the space domains domains~$\Omega_s^\pm$ are time-independent. The price to pay is the nonlinear operators with respect to~$X$ that appear above. Recall that $\Delta_{\Gamma_s} \Id = [p_s]n_s$, where $p_s^\pm$ are the constant static pressures introduced in Lemma~\ref{lemma-Temam}. Further, we introduce the following unknowns:
\begin{equation}
\begin{array}{rcl}
\hat{u}^\pm = e^{\lambda t} \tilde{u}^\pm, \quad
\hat{p}^\pm = e^{\lambda t} (\tilde{p}^\pm - p_s^\pm), \quad
\hat{f}^\pm = e^{\lambda t} \tilde{f} & & \text{in } \Omega_s^\pm \times (0,\infty), \\
\hat{Z}(\cdot,t) = e^{\lambda t}(X(\cdot,t)-\Id), \quad \hat{g} = e^{\lambda t} \tilde{g} & & 
\text{on } \Gamma_s\times (0,\infty).
\end{array}\label{change-unknowns-bis}
\end{equation}
The idea here is to find a control~$\hat{g}$ that will make the variables~$(\hat{u}^\pm,\hat{p}^\pm,\hat{Z})$ bounded, so that the original unknwon will decrease exponentially with~$\lambda$ as decay rate. The system satisfied by $(\hat{u}^\pm,\hat{p}^\pm,\hat{X})$ is the following:
\begin{equation}
\begin{array} {rcl}
 - \divg (\sigma(\hat{u}^\pm, \hat{p}^\pm)
 =  \hat{f}^\pm +  F(\hat{u}^\pm,\hat{p}^\pm, \hat{Z})
\quad \text{ and } \quad
\divg \hat{u}  =  \divg H(\hat{u}^\pm, \hat{Z}) &  & 
\text{in } \Omega_s^\pm \times (0,\infty), \\
\hat{u}^+ = 0 & & \text{on } \p \Omega \times (0,\infty), \\
\hat{u}^\pm = \displaystyle \frac{\p \hat{Z}}{\p t} - \lambda \hat{Z}, 
\ \text{ and } \ 
-\left[\sigma(\hat{u},\hat{p})\right] n_s = 
\mu\divg_{\Gamma_s}\nabla^{n_s}_{\Gamma_s}\hat{Z} + 
\hat{g} + G(\hat{u}^+, \hat{p}^+, \hat{u}^-, \hat{p}^-, \hat{Z})
& & \text{on } \Gamma_s \times (0,\infty), \\
\hat{Z}(\cdot,0) = X_0-\Id & & \text{on } \Gamma_s.
\end{array} \label{sys-cyl2}
\end{equation}
where we have introduced
\begin{subequations} \label{rhs-nonlinear}
\begin{eqnarray}
 F(\hat{u}^\pm,\hat{p}^\pm, \hat{Z}) & = &  
\divg \left( \tilde{\sigma}(\hat{u}^\pm,\hat{p}^\pm) (\cof \nabla \tilde{X}-\I) \right)  + 2\nu\divg\left(
\mathrm{Sym}\left( \nabla \hat{u} (\tilde{Y}(\tilde{X})-\I)\right) 
\right), \label{rhs-F} \\
G(\hat{u}^+, \hat{p}^+, \hat{u}^-, \hat{p}^-, \hat{Z}) & = & 
\left[\tilde{\sigma}(\hat{u},\hat{p})\right] (\cof \nabla \tilde{X}-\I) n_s  + 2\nu\left[\mathrm{Sym}\left(\nabla \hat{u} (\tilde{Y}(\tilde{X})-\I) \right)
\right]n_s \nonumber \\
& & + \|\nabla \tilde{X}\|_{\mathbb{H}^1(\Gamma_s)} \mathcal{O}(\|\nabla \tilde{X}-I\|_{\mathbb{H}^1(\Gamma_s)}), \label{rhs-nonlinear-1} \label{rhs-g}\\
H(\hat{u}^\pm, \hat{Z}) & = &  (\I - \cof \nabla \tilde{X})^T\hat{u}^\pm . \label{rhs-h}
\end{eqnarray}
\end{subequations}
In section~\ref{sec-nonlinear} we deduce regularity for the functions $F$, $G$ and $H$. Note that the condition $\hat{u}^+_{|\p \Omega} = 0$ on the outer boundary implies that $H(\hat{u}^+,Z)_{|\p \Omega} = 0$, and so we will consider in particular $H(\hat{u}^+,Z) \in \mathcal{U}_{\infty}(\Omega_s^+)$. In system~\eqref{sys-cyl2}, the linear part is on the left-hand-side, and the nonlinear part is represented by the right-hand-sides $F$, $G$ and $H$. Considering small data, the displacements $X-\Id$ remain small, and the nonlinearities $F$, $G$ and $H$ too (see section~\ref{sec-nonlinear}). Therefore we first study the stabilizability of the linearized system.

\section{On the linearized system} \label{sec-linear}

From~\eqref{sys-cyl2} we deduce the linearized system with $(u^\pm, p^\pm)$ and $Z$ as unknowns:
\begin{eqnarray}
-\divg \sigma(u^\pm, p^\pm) = 0 \quad \text{ and } \quad 
\divg u^\pm = 0 & & \text{in } \Omega_s^\pm \times (0,\infty), \nonumber \\
u^+ = 0 & &\text{on } \p \Omega \times (0,\infty), \nonumber \\
u^+ = u^- = \frac{\p Z}{\p t} - \lambda Z 
\quad \text{ and } \quad 
-\left[\sigma(u,p) \right]n_s = \mu \divg_{\Gamma_s} \nabla^{n_s}_{\Gamma_s} Z + G & &
\text{on } \Gamma_s \times (0,\infty), \label{slava0}\\
Z(\cdot,0 ) = Z_0 & & \text{on } \Gamma_s. \nonumber
\end{eqnarray}
As explained in section~\ref{sec-kernel}, the operator $\nabla^{n_s}_{\Gamma_s}$ is non-coercive, and we define a first feedback operator $Z\mapsto \mu\divg_{\Gamma_s} \big( (\tau_s \otimes \tau_s)\nabla_{\Gamma_s} Z\big) = \mu \divg_{\Gamma_s} \nabla^{\tau_s}_{\Gamma_s} Z = G$, such that the resulting elliptic operator in~\eqref{slava0} becomes
\begin{equation*}
Z\mapsto \mu \divg_{\Gamma_s} \nabla^{n_s}_{\Gamma_s} Z 
+ \mu \divg_{\Gamma_s} \nabla^{\tau_s}_{\Gamma_s} Z = 
\divg_{\Gamma_s} \nabla_{\Gamma_s} Z = \Delta_{\Gamma_s}  Z.
\end{equation*}
Given $T\in(0,\infty)$, this section is devoted to wellposedness and operator formulation for the following linear system:
\begin{subequations} \label{mainsyslin}
\begin{eqnarray}
-\divg \sigma(u^\pm, p^\pm) = 0 \quad \text{ and } \quad 
\divg u^\pm = 0 & & \text{in } \Omega_s^\pm \times (0,T), \\
u^+ = 0 & &\text{on } \p \Omega \times (0,T), \\
u^+ = u^- = \frac{\p Z}{\p t} - \lambda Z 
\quad \text{ and } \quad 
-\left[\sigma(u,p) \right]n_s = \mu \Delta_{\Gamma_s} Z + G & &
\text{on } \Gamma_s \times (0,T), \label{mainsyslin-4}\\
Z(\cdot,0 ) = Z_0 & & \text{on } \Gamma_s.
\end{eqnarray}
\end{subequations}
The data are assumed to satisfy $G \in \mathcal{G}_T(\Gamma_s)=\L^2(0,T;\mathbf{H}^{1/2}(\Gamma_s))$, $Z_0 \in \mathcal{Z}_0(\Gamma_s)= \HH^{2}(\Gamma_s)/\R^2$. Our approach consists in writing~\eqref{mainsyslin} as an abstract evolution equation with $Z$ as the only unknown. The other unknowns $(u^\pm,p^\pm)$ can be then deduced as solutions of standard Stokes problems with the Dirichlet boundary condition of~\eqref{mainsyslin-4}.

\subsection{A Poincar\'e-Steklov operator}\label{sec-Steklov}

For $G\in \mathbf{H}^{1/2}(\Gamma_s)$ given, in this subsection we are interested in the following linear transmission problem:
\begin{subequations} \label{syslin}
\begin{eqnarray}
-\divg \sigma(u^{\pm}, p^{\pm}) = 0 \quad \text{ and } \quad 
\divg u^{\pm} = 0 & & \text{in } \Omega_s^{\pm},\\
u^+ = 0 & & \text{on } \p \Omega, \\
\left[ u\right] = 0 
\quad \text{ and } \quad
-\left[\sigma(u,p)\right]n_s = G & & \text{on } \Gamma_s.\label{syslin-4}
\label{syslin-5}
\end{eqnarray}
\end{subequations}
Following the approach of~\cite{Court-JCAM}, the equality of velocities in~\eqref{syslin-4} leads us to introduce a boundary velocity $\phi \in \HH^{1/2}(\Gamma_s)$, and we obtain a weak solution of~\eqref{syslin} as a critical point of the following Lagrangian functional:
\begin{equation*}
\begin{array} {rcl}
\mathcal{L}(u^+,p^+,u^-,p^-, \lambda^+,\lambda^-,\phi) & = & 
2\nu\|\varepsilon(u^+)\|^2_{\mathbb{L}^2(\Omega_s^+)} + 2\nu
\|\varepsilon(u^-)\|^2_{\mathbb{L}^2(\Omega_s^-)} \\
& & - \langle p^+ ,\divg u^+\rangle_{\L^2(\Omega_s^+)} 
- \langle p^- , \divg u^-\rangle_{\L^2( \Omega_s^-)} \\
& & -\langle\lambda^+,u^+-\phi\rangle_{\HH^{-1/2}(\Gamma_s):\HH^{1/2}(\Gamma_s)} -
 \langle \lambda^- , u^- -\phi\rangle_{\HH^{-1/2}(\Gamma_s):\HH^{1/2}(\Gamma_s)} \\
& &   - \langle G , \phi \rangle_{\HH^{-1/2}(\Gamma_s):\HH^{1/2}(\Gamma_s)} .
\end{array}
\end{equation*}
Note that the variable $\phi$ also plays the role of a multiplier for the transmission condition~\eqref{syslin-5}. Let us introduce
\begin{equation*}
\begin{array} {l}
\mathbf{V}^+ = \displaystyle \left\{ v \in \HH^1(\Omega_s^+) \mid v_{|\p \Omega} = 0 \right\}, \quad  
\mathbf{V}^- = \HH^1(\Omega_s^-)/\R^2, \quad Q^\pm = \L^2(\Omega_s^\pm) /\R, \\
\mathbf{W} = \displaystyle \left\{ v\in \HH^{1/2}(\Gamma_s) \mid 
\langle v, n \rangle_{\LL^2(\Gamma_s)} = 0 \right\}.
\end{array}
\end{equation*}
Relying on the Korn's inequality and the Petree-Tartar lemma, we equip $\mathbf{V}^\pm$ with the norms $\| v\|_{\mathbf{V}^\pm} := \| \varepsilon(v)\|_{\mathbb{L}^2(\Omega_s^\pm)}$. For the sake of brevity we denote
\begin{eqnarray*}
& & \mathfrak{u}  =  (u^+,p^+,u^-,p^-,\lambda^+,\lambda^-,\phi),\quad \mathfrak{v}  =  (v^+,q^+,v^-,q^-,\mu^+,\mu^-,\varphi),\\
& & \mathfrak{V} = \mathbf{V}^+\times Q^+\times \mathbf{V}^-\times Q^-\times \mathbf{W}' \times \mathbf{W}' \times \mathbf{W}.
\end{eqnarray*}
A weak solution of~\eqref{syslin} satisfies the variational formulation given by the first order optimality condition for functional~$\mathcal{L}$:
\begin{eqnarray} 
\begin{array}{l}
 \text{Find $\mathfrak{u} \in \mathfrak{V}$, such that for all $\mathfrak{v} \in \mathfrak{V} $:}\\
 \label{FV0}
\left\{\begin{array} {rcl} 
\displaystyle \langle\sigma(u^\pm,p^\pm),\varepsilon(v^\pm)\rangle_{\LLL^2(\Omega_s^\pm)}
-\langle \lambda^\pm, v^\pm\rangle_{\mathbf{W}';\mathbf{W}} = 0,  & & 
\displaystyle -\langle q^\pm ,\divg u^\pm \rangle_{\L^2(\Omega_s^\pm)} = 0,\\
\displaystyle -\langle\mu^\pm , u^\pm-\Phi \rangle_{\mathbf{W}';\mathbf{W}} = 0, & & 
\displaystyle  \langle \lambda^+ + \lambda^- - G, \varphi \rangle_{\mathbf{W}';\mathbf{W}} = 0.
\end{array} \right.
\end{array}
\end{eqnarray}
By integration by parts, we easily see that at the optimality we have $\lambda^\pm = \sigma(u^\pm,p^\pm)n^\pm$. We rewrite the variational problem~\eqref{FV0} more compactly, as follows:
\begin{equation*}
 \text{Find $\mathfrak{u} \in \mathfrak{V}$ such that} \quad
\mathcal{M}(\mathfrak{u};\mathfrak{v}) = \mathcal{G}(\mathfrak{v})\quad \forall \mathfrak{v} \in \mathfrak{V},
\end{equation*}
\begin{equation*}
\begin{array}{rcl}
\text{where}\quad \mathcal{M}(\mathfrak{u};\mathfrak{v}) &  :=  & 
2\nu \langle\varepsilon(u^+),\varepsilon(v^+)\rangle_{\mathbb{L}^2(\Omega_s^+)} + 2\nu \langle\varepsilon(u^-):\varepsilon(v^-)\rangle_{\mathbb{L}^2(\Omega_s^-)} \\
& & - \langle p^+,\divg v^+\rangle_{\L^2(\Omega_s^+)} 
-\langle q^+,\divg u^+ \rangle_{\L^2(\Omega_s^+)} 
- \langle p^-,\divg v^-\rangle_{\L^2(\Omega_s^-)}
- \langle q^-,\divg u^- \rangle_{\L^2(\Omega_s^-)} \\
& & - \langle \lambda^+ ,v^+-\varphi \rangle_{\mathbf{W}';\mathbf{W}} 
- \langle \mu^+ , u^+ - \Phi\rangle_{\mathbf{W}';\mathbf{W}} 
- \langle\lambda^- ,v^--\varphi\rangle_{\mathbf{W}';\mathbf{W}} 
- \langle \mu^-, u^- - \Phi\rangle_{\mathbf{W}';\mathbf{W}}, \\
 \mathcal{G}(\mathfrak{v}) & := & \left\langle G, \varphi \right\rangle_{\mathbf{W}';\mathbf{W}}.
\end{array}
\end{equation*}
The existence and uniqueness of a solution for~\eqref{FV0} is equivalent to the Ladyzhenskaya-Babu\v{s}ka-Brezzi inf-sup condition. In that sense we state the following result: 
\begin{proposition} \label{propinfsup}
There exists a constant $C >0$ such that
\begin{eqnarray*}
\inf_{\mathfrak{u} \in \mathfrak{V}\setminus \{0\}} \sup_{\mathfrak{v} \in \mathfrak{V}\setminus \{0\}} \frac{\mathcal{M}(\mathfrak{u};\mathfrak{v})}{\| \mathfrak{u} \|_{\mathfrak{V}} \| \mathfrak{v} \|_{\mathfrak{V}}} & \geq & C.
\end{eqnarray*}
\end{proposition}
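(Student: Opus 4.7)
The plan is to verify the Ladyzhenskaya-Babu\v{s}ka-Brezzi inequality by constructing, for each given $\mathfrak{u} = (u^+,p^+,u^-,p^-,\lambda^+,\lambda^-,\Phi) \in \mathfrak{V}$, a test field $\mathfrak{v} \in \mathfrak{V}$ of the form $\mathfrak{v} = \mathfrak{v}_0 + \epsilon_1 \mathfrak{v}_p + \epsilon_2 \mathfrak{v}_\lambda + \epsilon_3 \mathfrak{v}_\Phi$ such that $\mathcal{M}(\mathfrak{u};\mathfrak{v}) \geq c\|\mathfrak{u}\|_{\mathfrak{V}}^2$ together with $\|\mathfrak{v}\|_{\mathfrak{V}} \leq C\|\mathfrak{u}\|_{\mathfrak{V}}$. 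Each of the four pieces is tailored to extract the squared norm of one group of variables: $\mathfrak{v}_0$ yields velocity coercivity, while $\mathfrak{v}_p$, $\mathfrak{v}_\lambda$ and $\mathfrak{v}_\Phi$ capture respectively the pressures $p^\pm$, the Lagrange multipliers $\lambda^\pm$, and the boundary field $\Phi$.

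First I would use the symmetrized choice $\mathfrak{v}_0 := (u^+,-p^+,u^-,-p^-,-\lambda^+,-\lambda^-,\Phi)$: all saddle-point cross terms cancel pairwise and there remains
\begin{equation*}
\mathcal{M}(\mathfrak{u};\mathfrak{v}_0) = 2\nu \bigl(\|\varepsilon(u^+)\|^2_{\mathbb{L}^2(\Omega_s^+)} + \|\varepsilon(u^-)\|^2_{\mathbb{L}^2(\Omega_s^-)}\bigr) = 2\nu\bigl(\|u^+\|_{\mathbf{V}^+}^2+\|u^-\|_{\mathbf{V}^-}^2\bigr),
\end{equation*}
by the very definition of the $\mathbf{V}^\pm$ norms. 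To capture $p^\pm$, a classical Bogovskii right inverse of the divergence produces $w_p^\pm \in \HH^1_0(\Omega_s^\pm)$ with $\divg w_p^\pm = p^\pm$ and $\|w_p^\pm\|_{\mathbf{V}^\pm} \leq C\|p^\pm\|_{Q^\pm}$; because $w_p^\pm$ vanishes on $\Gamma_s$ (and on $\p\Omega$ for $w_p^+$), the choice $\mathfrak{v}_p := (-w_p^+,0,-w_p^-,0,0,0,0)$ annihilates every boundary pairing and produces $\|p^+\|_{Q^+}^2 + \|p^-\|_{Q^-}^2$, up to cross terms with $\varepsilon(u^\pm)$ that will be absorbed in the final step.

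For the multipliers $\lambda^\pm$, duality on $\mathbf{W}'$ furnishes $\psi^\pm \in \mathbf{W}$ with $\|\psi^\pm\|_{\mathbf{W}} = \|\lambda^\pm\|_{\mathbf{W}'}$ and $\langle \lambda^\pm,\psi^\pm\rangle \geq \tfrac{1}{2}\|\lambda^\pm\|_{\mathbf{W}'}^2$. Since the mean-zero condition $\int_{\Gamma_s}\psi^\pm \cdot n_s = 0$ built into $\mathbf{W}$ is precisely the Stokes compatibility condition on $\Omega_s^\pm$, a divergence-free trace lifting produces $w_\lambda^\pm \in \mathbf{V}^\pm$ with $w_\lambda^\pm = \psi^\pm$ on $\Gamma_s$, $\divg w_\lambda^\pm = 0$, $w_\lambda^+ = 0$ on $\p\Omega$, and $\|w_\lambda^\pm\|_{\mathbf{V}^\pm} \leq C\|\lambda^\pm\|_{\mathbf{W}'}$. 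Then $\mathfrak{v}_\lambda := (-w_\lambda^+,0,-w_\lambda^-,0,0,0,0)$ contributes $\langle\lambda^+,\psi^+\rangle + \langle\lambda^-,\psi^-\rangle$ (the pressure pairings vanish because $\divg w_\lambda^\pm = 0$) together with a controllable cross term against $\varepsilon(u^\pm)$. Finally, letting $\mu_\Phi^\pm \in \mathbf{W}'$ be half the Riesz representative of $\Phi$, so that $\mu_\Phi^+ + \mu_\Phi^-$ duality-pairs with $\Phi$ to give $\|\Phi\|_{\mathbf{W}}^2$ and $\|\mu_\Phi^\pm\|_{\mathbf{W}'} \leq C\|\Phi\|_{\mathbf{W}}$, the choice $\mathfrak{v}_\Phi := (0,0,0,0,\mu_\Phi^+,\mu_\Phi^-,0)$ yields $\|\Phi\|_{\mathbf{W}}^2$ modulo pairings $\langle \mu_\Phi^\pm, u^\pm\rangle$ estimated by the trace theorem.

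Combining the four pieces with $\epsilon_i > 0$ chosen small and systematically absorbing every cross term by Young's inequality into the leading squared norms gives $\mathcal{M}(\mathfrak{u};\mathfrak{v}) \geq c\|\mathfrak{u}\|_{\mathfrak{V}}^2$; since each building block was constructed with norm controlled by the matching component of $\mathfrak{u}$, one also has $\|\mathfrak{v}\|_{\mathfrak{V}} \leq C\|\mathfrak{u}\|_{\mathfrak{V}}$, and the inf-sup bound follows. I expect the main technical subtlety to be the divergence-free trace lifting into $\mathbf{V}^+$: the lift has to match $\psi^+$ on $\Gamma_s$ while simultaneously vanishing on $\p\Omega$, which is possible only thanks to the mean-zero normal constraint that defines $\mathbf{W}$ — this is exactly the classical compatibility condition for solving the homogeneous Stokes problem on the annular domain $\Omega_s^+$, and its interplay with the quotient structure of $\mathbf{V}^-$ (by $SE(2)$) and of $Q^\pm$ (by $\R$) must be handled carefully when checking that the constructed liftings descend to the quotients.
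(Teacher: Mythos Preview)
Your proposal is correct and follows essentially the same four-block construction as the paper: the choices $\mathfrak{v}_0$ and $\mathfrak{v}_p$ are identical to the paper's $\mathfrak{v}_1$ and $\mathfrak{v}_2$, and your $\mathfrak{v}_\Phi$ is the Riesz-representative version of the paper's $\mathfrak{v}_4 = (0,0,0,0,\phi,\phi,0)$. The only substantive variation is in the multiplier step: the paper builds $\mathfrak{v}_3$ from full Stokes solutions $(v_h^\pm, q_h^\pm)$ with prescribed Dirichlet trace (Lemma~\ref{lemma-app2}), which introduces an extra cross term $-\langle q_h^\pm, \divg u^\pm\rangle$ to absorb, whereas your divergence-free trace lifting with $q^\pm = 0$ avoids this term altogether --- a minor simplification, and the compatibility issue you flag (mean-zero normal trace on $\Gamma_s$, vanishing on $\p\Omega$) is exactly the one the paper handles via its Lemma~\ref{lemma-app2}.
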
 
The proof of this proposition is given in Appendix~\ref{appendix2}. The consequence of this result is the existence and uniqueness of a weak solution for system~\eqref{syslin}.

\begin{corollary} \label{coroinfsup}
Assume that $G \in \mathbf{W}$. System~\eqref{syslin} admits a unique solution $(u^+,p^+,u^-,p^-)$ in $\mathbf{V}^+, Q^+, \mathbf{V}^- \times Q^-$. Moreover, there exists a constant $C >0$, depending only on $\Omega_s^+$ and $\Omega_s^-$, such that
\begin{eqnarray*}
\| u^+\|_{\mathbf{H}^1(\Omega_s^+)} + \| p^+\|_{\L^2(\Omega_s^+)} + \| u^-\|_{\mathbf{H}^1(\Omega_s^-)} + \| p^-\|_{\L^2(\Omega_s^-)} & \leq & 
C \| G \|_{\mathbf{H}^{-1/2}(\Gamma_s)}.
\end{eqnarray*}
\end{corollary}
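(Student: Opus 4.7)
The plan is to derive Corollary~\ref{coroinfsup} from Proposition~\ref{propinfsup} by invoking the Banach--Ne\v{c}as--Babu\v{s}ka theorem on the abstract variational problem ``Find $\mathfrak{u}\in\mathfrak{V}$ such that $\mathcal{M}(\mathfrak{u};\mathfrak{v})=\mathcal{G}(\mathfrak{v})$ for all $\mathfrak{v}\in\mathfrak{V}$'', and then to translate the resulting abstract solution back into the strong formulation~\eqref{syslin}.

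First I would verify the standing hypotheses of that theorem. The bilinear form $\mathcal{M}$ is continuous on $\mathfrak{V}\times\mathfrak{V}$: each of its summands is bounded by a product of two component norms, via Cauchy--Schwarz on $\LLL^2(\Omega_s^\pm)$ and $\L^2(\Omega_s^\pm)$ and via the duality pairing $\mathbf{W}'\times\mathbf{W}$. The linear form $\mathcal{G}$ satisfies
\begin{equation*}
|\mathcal{G}(\mathfrak{v})| = |\langle G,\varphi\rangle_{\mathbf{W}';\mathbf{W}}| \leq \|G\|_{\mathbf{H}^{-1/2}(\Gamma_s)}\|\varphi\|_{\HH^{1/2}(\Gamma_s)} \leq \|G\|_{\mathbf{H}^{-1/2}(\Gamma_s)}\|\mathfrak{v}\|_{\mathfrak{V}}.
\end{equation*}
Moreover $\mathcal{M}$ is symmetric under the exchange $\mathfrak{u}\leftrightarrow\mathfrak{v}$ that pairs each primal variable with its Lagrangian counterpart, so the inf-sup bound of Proposition~\ref{propinfsup} automatically implies the companion bound with the roles of $\mathfrak{u}$ and $\mathfrak{v}$ swapped. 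The abstract theorem then yields existence and uniqueness of $\mathfrak{u}\in\mathfrak{V}$, together with the a priori bound $\|\mathfrak{u}\|_{\mathfrak{V}}\leq C\|G\|_{\mathbf{H}^{-1/2}(\Gamma_s)}$.

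Next I would extract the classical interpretation of the first four components of $\mathfrak{u}=(u^+,p^+,u^-,p^-,\lambda^+,\lambda^-,\phi)$. Varying $q^\pm\in Q^\pm$ gives the incompressibility conditions $\divg u^\pm = 0$. Varying $v^\pm$ with compact support in $\Omega_s^\pm$ yields the interior Stokes equations $-\divg\sigma(u^\pm,p^\pm)=0$ in the distributional sense. Going back to arbitrary $v^\pm\in\mathbf{V}^\pm$ and integrating by parts identifies, in $\mathbf{W}'$, the Lagrange multipliers $\lambda^\pm$ with the normal Cauchy stress traces $\sigma(u^\pm,p^\pm)n_s^\pm$. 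Varying $\mu^\pm\in\mathbf{W}'$ gives $u^\pm_{|\Gamma_s}=\phi$, hence $[u]=0$, while variation with respect to $\varphi\in\mathbf{W}$ gives $\lambda^++\lambda^-=G$. Combining the last two facts with the identification of $\lambda^\pm$ and the convention $n_s=n_s^-=-n_s^+$ produces exactly the transmission condition $-[\sigma(u,p)]n_s=G$. The homogeneous Dirichlet condition $u^+_{|\partial\Omega}=0$ is encoded in $\mathbf{V}^+$.

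The announced quantitative bound then follows from $\|\mathfrak{u}\|_{\mathfrak{V}}\leq C\|G\|_{\mathbf{H}^{-1/2}(\Gamma_s)}$, once one recalls that the $\mathbf{V}^\pm$-norms defined by $\|\varepsilon(\cdot)\|_{\LLL^2(\Omega_s^\pm)}$ are equivalent to the usual $\HH^1(\Omega_s^\pm)$-norms by Lemma~\ref{lemma-Korn} on the relevant subspace, and that the $Q^\pm$-norms coincide with the $\L^2(\Omega_s^\pm)/\R$-norms. I expect the only non-trivial point in the whole argument to be the rigorous identification of the multipliers $\lambda^\pm$ with the stress traces $\sigma(u^\pm,p^\pm)n_s^\pm$ in $\mathbf{W}'$: since $(u^\pm,p^\pm)$ has only $\HH^1\times\L^2$ regularity, these traces have to be defined through the variational identity itself, in the same spirit as the standard interpretation of $\sigma(u,p)n$ for solutions of the Stokes equations with an $\L^2$ right-hand side.
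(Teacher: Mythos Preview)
Your proposal is correct and follows essentially the same route as the paper: both invoke the Banach--Ne\v{c}as--Babu\v{s}ka theorem, relying on Proposition~\ref{propinfsup} for the inf-sup condition and on the obvious continuity of~$\mathcal{G}$. The only minor difference lies in how the second BNB hypothesis is checked: the paper verifies injectivity directly by reusing the test function~$\mathfrak{v}$ constructed in Step~5 of the proof of Proposition~\ref{propinfsup} (so that $\mathcal{M}(\mathfrak{u},\mathfrak{v})\geq\delta\|\mathfrak{u}\|^2$ forces $\mathfrak{u}=0$), whereas you observe that $\mathcal{M}$ is symmetric and hence the transposed inf-sup follows automatically from Proposition~\ref{propinfsup}. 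Both arguments are valid and equivalent here; your additional paragraph on recovering the strong form and identifying $\lambda^\pm$ with the normal stress traces is more detailed than what the paper spells out, but is in line with its setup of the variational formulation~\eqref{FV0}.
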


The proof of Corollary~\ref{coroinfsup} is also given in Appendix~\ref{appendix2}. Thu, considering the trace on $\Gamma_s$ of the solution $u^\pm$ of system~\eqref{syslin}, we have defined the operator
\begin{equation} \label{def-Steklov}
\begin{array} {rccl}
\mathcal{P}_{\Gamma_s}: & \mathbf{W}' & \rightarrow & \mathbf{W} \\
& G & \mapsto & u^\pm_{|\Gamma_s}
\end{array}
\end{equation}
mapping the jump condition in the right side of~\eqref{syslin-5} to the velocity trace on $\Gamma_s$. From Proposition~\ref{propinfsup} and Corollary~\ref{coroinfsup}, we can deduce more regularity for system~\eqref{syslin}, and consequently for operator~$\mathcal{P}_{\Gamma_s}$ restricted to $\HH^{1/2}(\Gamma_s)$.

\begin{proposition} \label{prop-op-reg}
Assume that $\p \Omega$ is of class $C^1$, and that $G \in \mathbf{H}^{1/2}(\Gamma_s)$. Then system~\eqref{syslin} admits a unique solution $(u^+, p^+, u^-, p^-)$ in $\mathbf{H}^2(\Omega_s^+) \times \H^{1}(\Omega_s^+)\times \mathbf{H}^2(\Omega_s^-) \times \H^{1}(\Omega_s^-)$, and it satisfies the estimate
\begin{eqnarray*}
\| u^+\|_{\mathbf{H}^2(\Omega_s^+)} + \| p^+\|_{\H^1(\Omega_s^+)/\R}
+ \| u^-\|_{\mathbf{H}^2(\Omega^-)} + \| p^-\|_{\H^1(\Omega_s^-)/\R} & \leq &
C\| G \|_{\mathbf{H}^{1/2}(\Gamma_s)},
\end{eqnarray*}
where the constant $C >0$ depends only $\Omega_s^+$ and $\Omega_s^-$.
\end{proposition}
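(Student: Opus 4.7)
The plan is to bootstrap the weak $\mathbf{H}^1\times\L^2$-solution $(u^\pm,p^\pm)$ given by Corollary~\ref{coroinfsup} up to $\mathbf{H}^2\times\H^1$-regularity. The main step is to upgrade the common trace $\phi:=u^+_{|\Gamma_s}=u^-_{|\Gamma_s}$ from $\mathbf{H}^{1/2}(\Gamma_s)$ to $\mathbf{H}^{3/2}(\Gamma_s)$; once this is done, the conclusion will follow from the classical Cattabriga-type regularity theory for the stationary Stokes system with Dirichlet data, applied separately in each subdomain.

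Granting momentarily that $\phi\in\mathbf{H}^{3/2}(\Gamma_s)$, observe that $(u^-,p^-)$ is the unique solution of a Dirichlet Stokes problem in $\Omega_s^-$ with datum $\phi$ (the compatibility condition $\int_{\Gamma_s}\phi\cdot n_s=0$ is encoded in $\phi\in\mathbf{W}$), and $(u^+,p^+)$ is the unique solution of a Stokes problem in $\Omega_s^+$ with Dirichlet data $0$ on $\partial\Omega$ and $\phi$ on $\Gamma_s$. Since $\Omega_s^\pm$ are smooth, classical Stokes regularity yields
\begin{equation*}
\|u^\pm\|_{\mathbf{H}^2(\Omega_s^\pm)} + \|p^\pm\|_{\H^1(\Omega_s^\pm)/\R} \leq C\|\phi\|_{\mathbf{H}^{3/2}(\Gamma_s)},
\end{equation*}
which, combined with the trace upgrade, gives the announced estimate. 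Uniqueness is inherited from Corollary~\ref{coroinfsup}.

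To perform the upgrade of $\phi$, I would introduce the Dirichlet-to-Neumann operators $\Lambda^\pm:\phi\mapsto \sigma(u^\pm,p^\pm)n_s|_{\Gamma_s}$ associated to the two Dirichlet Stokes problems described above. The jump condition~\eqref{syslin-5} then rewrites as the boundary equation $(\Lambda^+-\Lambda^-)\phi = G$ on $\Gamma_s$. Since by construction $\phi=\mathcal{P}_{\Gamma_s}G$, the operator $\Lambda^+-\Lambda^-$ is exactly the inverse of $\mathcal{P}_{\Gamma_s}$ from~\eqref{def-Steklov}. It is a first-order, elliptic, self-adjoint, positive pseudo-differential operator on $\Gamma_s$, whose principal symbol (computed by freezing coefficients and passing to the half-space Stokes problem) is $2\nu|\xi|\,\Id$. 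Elliptic regularity on the compact smooth manifold $\Gamma_s$ then yields $\phi\in\mathbf{H}^{3/2}(\Gamma_s)$ with $\|\phi\|_{\mathbf{H}^{3/2}(\Gamma_s)}\leq C\|G\|_{\mathbf{H}^{1/2}(\Gamma_s)}$.

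The main obstacle is establishing this elliptic estimate for $\Lambda^+-\Lambda^-$ without invoking heavy pseudo-differential machinery. The most economical route is a variational argument based on tangential difference quotients: localize near $\Gamma_s$ via a finite atlas, flatten the interface, and use tangential difference quotients of the weak solution as test functions in the variational formulation of Corollary~\ref{coroinfsup}. This produces a uniform-in-$h$ bound on the tangential derivatives of $u^\pm$, hence $u^\pm\in\mathbf{H}^{3/2}$ locally and $\phi\in\mathbf{H}^1(\Gamma_s)$; then using the equations inside $\Omega_s^\pm$ to trade tangential regularity for normal regularity and iterating the argument once more yields $\phi\in\mathbf{H}^{3/2}(\Gamma_s)$, closing the bootstrap. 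The interior and $\partial\Omega$-boundary regularity pieces are standard and handled separately by classical Stokes theory.
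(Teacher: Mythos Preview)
Your proposal is correct in spirit and considerably more detailed than what the paper actually does: the paper's proof consists of a single sentence deferring to the elliptic regularity argument of \cite[Theorem~9.19, p.~278]{Hackbusch} for transmission problems, asserting that it can be repeated verbatim here. Your route---bootstrap the interface trace $\phi$ from $\mathbf{H}^{1/2}$ to $\mathbf{H}^{3/2}$ via the ellipticity of the combined Dirichlet-to-Neumann operator, then invoke Cattabriga-type Stokes regularity in each subdomain---is a standard and legitimate way to carry out precisely such a transmission-regularity proof, and the tangential difference-quotient alternative you sketch is in fact how results like Hackbusch's are typically proved. So the two approaches are not genuinely different; yours simply unpacks what the cited reference contains.

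Two minor caveats. First, with your convention $\Lambda^\pm\phi=\sigma(u^\pm,p^\pm)n_s$ (same normal $n_s=n_s^-$ on both sides), the jump condition $-[\sigma(u,p)]n_s=G$ reads $(\Lambda^--\Lambda^+)\phi=G$, not $(\Lambda^+-\Lambda^-)\phi=G$; this is harmless since the operator you actually need is the sum of the two outward-normal DtN maps, which is what you get after correcting the sign. Second, the principal symbol of the Stokes DtN operator is not exactly $2\nu|\xi|\,\Id$---for the half-space Stokes problem it is a positive-definite matrix symbol of order~1 with a slightly more involved structure---but ellipticity (which is all you use) holds, so the argument goes through.
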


\begin{proof}
The proof of the regularity theorem of~\cite[Theorem~9.19, p.~278]{Hackbusch}, can be repeated in our context, and the regularity of $u^\pm$ in $\HH^2(\Omega_s^\pm)$ follows, as well as those of $p^\pm$ in $\H^1(\Omega_s^\pm)/\R$.
\end{proof}

Therefore $\mathcal{P}_{\Gamma_s}$ maps $\HH^{1/2}(\Gamma_s)$ onto $\HH^{3/2}(\Gamma_s)$. Finally, we state the following properties for the Neumann-to-Dirichlet operator $\mathcal{P}_{\Gamma_s}$:

\begin{proposition} \label{prop-PS}
Operator~$\mathcal{P}_{\Gamma_s}$ is self-adjoint and non-negative, and $\mathrm{Ker}(\mathcal{P}_{\Gamma_s}) = \mathrm{span}(n_s)$.
\end{proposition}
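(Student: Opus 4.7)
I would decompose the claim into three independent verifications, all of which rest on a single integration-by-parts identity for system~\eqref{syslin}.

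\emph{Self-adjointness and non-negativity.} Given two data $G_1, G_2 \in \mathbf{W}'$ with associated solutions $(u_i^\pm, p_i^\pm)$, the first step is to multiply the momentum equation $-\divg \sigma(u_2^\pm, p_2^\pm) = 0$ by $u_1^\pm$ and integrate over $\Omega_s^\pm$. The Dirichlet condition $u_1^+|_{\p\Omega} = 0$ cancels the boundary contribution on $\p\Omega$; the incompressibility $\divg u_1^\pm = 0$ eliminates the pressure term $p_2^\pm \divg u_1^\pm$; the continuity $u_1^+ = u_1^-$ on $\Gamma_s$ combined with the normal conventions $n^+ = -n_s$, $n^- = n_s$ merges the two interface terms into $\langle -[\sigma(u_2,p_2)] n_s, u_1 \rangle_{\Gamma_s}$; and the symmetry of $\varepsilon$ turns $\sigma(u_2,p_2) : \nabla u_1$ into $2\nu\, \varepsilon(u_2) : \varepsilon(u_1)$. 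Summing the two subdomain contributions yields
\[
\langle G_2, \mathcal{P}_{\Gamma_s} G_1 \rangle_{\mathbf{W}'; \mathbf{W}}
= 2\nu \bigl( \langle \varepsilon(u_1^+), \varepsilon(u_2^+) \rangle_{\LLL^2(\Omega_s^+)}
+ \langle \varepsilon(u_1^-), \varepsilon(u_2^-) \rangle_{\LLL^2(\Omega_s^-)} \bigr).
\]
The right-hand side is manifestly symmetric under $1 \leftrightarrow 2$, which gives self-adjointness; specializing to $G_1 = G_2 = G$ yields non-negativity via $2\nu(\|\varepsilon(u^+)\|_{\LLL^2(\Omega_s^+)}^2 + \|\varepsilon(u^-)\|_{\LLL^2(\Omega_s^-)}^2) \geq 0$.

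\emph{Characterization of the kernel.} To see $\mathrm{span}(n_s) \subseteq \mathrm{Ker}(\mathcal{P}_{\Gamma_s})$, I would verify that for $G = c\, n_s$ the tuple $u^\pm \equiv 0$ together with any constant pressures satisfying $p^+ - p^- = c$ solves~\eqref{syslin}, so that uniqueness from Corollary~\ref{coroinfsup} forces $\mathcal{P}_{\Gamma_s}(c\, n_s) = u|_{\Gamma_s} = 0$. For the reverse inclusion, suppose $\mathcal{P}_{\Gamma_s} G = 0$, i.e.\ $u^\pm|_{\Gamma_s} = 0$. Then $u^+$ solves a homogeneous Stokes problem in $\Omega_s^+$ with zero Dirichlet data on all of $\p \Omega_s^+ = \p\Omega \cup \Gamma_s$, so the energy identity combined with Lemma~\ref{lemma-Korn} forces $u^+ \equiv 0$; similarly $u^- \equiv 0$ in $\Omega_s^-$, where the rigid-motion ambiguity of $\HH^1(\Omega_s^-)/SE(2)$ is killed by the vanishing of $u^-$ on the whole circle $\Gamma_s$ (the only rigid motion of $\R^2$ vanishing on a circle being the trivial one). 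Substituting $u^\pm = 0$ into the momentum equation yields $\nabla p^\pm = 0$, so each $p^\pm$ is constant, and the jump condition reduces to $G = -[\sigma(u,p)] n_s = (p^+ - p^-) n_s \in \mathrm{span}(n_s)$.

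\emph{Expected main difficulty.} The argument is conceptually routine; the delicate points consist in tracking the signs of the outward normals $n^\pm = \mp n_s$ correctly when assembling the interface integral, and in checking that the representation $G = (p^+ - p^-) n_s$ is insensitive to the $\L^2/\R$-ambiguity of the pressures, since that ambiguity is precisely absorbed into the one-dimensional space $\mathrm{span}(n_s)$.
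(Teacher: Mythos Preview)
Your proposal is correct and follows essentially the same route as the paper: both derive the identity $\langle G_2, \mathcal{P}_{\Gamma_s} G_1\rangle = 2\nu(\langle\varepsilon(u_1^+),\varepsilon(u_2^+)\rangle + \langle\varepsilon(u_1^-),\varepsilon(u_2^-)\rangle)$ by integration by parts and read off self-adjointness and non-negativity. For the kernel, the paper simply invokes Lemma~\ref{lemma-Korn} and Lemma~\ref{lemma-Temam} (the latter being precisely the statement that $u^\pm=0$ forces $G=c\,n_s$), whereas you unpack that content explicitly; the arguments are the same in substance.
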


\begin{proof}
Let be $G_1$, $G_2 \in \mathbf{H}^{-1/2}(\Gamma_s)$, and denote by $(u^\pm_1,p^\pm_1)$ and $(u^\pm_2,p^\pm_2)$ the solutions of system~\eqref{syslin} corresponding to $G_1$ and $G_2$ respectively. By integration by parts, we obtain
\begin{eqnarray*}
\langle G_2,\mathcal{P}_{\Gamma_s}G_1 \rangle_{\mathbf{H}^{-1/2}(\Gamma_s);\mathbf{H}^{1/2}(\Gamma_s)} & = &  
\langle -\left[ \sigma(u_2,p_2)\right]n_s , u^\pm_1\rangle_{\mathbf{H}^{-1/2}(\Gamma_s);\mathbf{H}^{1/2}(\Gamma_s)} \\
& = &\langle  \sigma(u^+_2,p^+_2)n^+_s + \sigma(u_2^-,p_2^-)n_s^- , u^\pm_1\rangle_{\mathbf{H}^{-1/2}(\Gamma_s);\mathbf{H}^{1/2}(\Gamma_s)}\\
& = & 2\nu\left( \langle \varepsilon(u^+_1) , \varepsilon(u^+_2) \rangle_{\LLL^2(\Omega_s^+)}
+ \langle \varepsilon(u^-_1) , \varepsilon(u^-_2) \rangle_{\LLL^2(\Omega_s^-)}
\right).
\end{eqnarray*}
This symmetric form shows that $\mathcal{P}_{\Gamma_s}$ is self-adjoint. Further, with $G = G_1 = G_2$, we have 
\begin{equation*}
\langle G,\mathcal{P}_{\Gamma_s}G \rangle_{\mathbf{H}^{-1/2}(\Gamma_s);\mathbf{H}^{1/2}(\Gamma_s)}  = 
2\nu \left(\|\varepsilon(u_1^+)\|^2_{\mathbf{L}^2(\Omega_s^+)} + \|\varepsilon(u_1^-)\|^2_{\mathbf{L}^2(\Omega_s^-)} \right)
\geq 0,
\end{equation*}
and $\langle G, \mathcal{P}_{\Gamma_s}G \rangle_{\mathbf{H}^{-1/2}(\Gamma_s);\mathbf{H}^{1/2}(\Gamma_s)} = 0$ if and only if~$u_1^\pm \equiv 0$ from Lemma~\ref{lemma-Korn}. 
Finally, Lemma~\ref{lemma-Temam} also describes the kernel of~$\mathcal{P}_{\Gamma_s}$, finishing the proof.
\end{proof}

\subsection{The semi-homogeneous system} \label{sec-linSH}
Using the operator~$\mathcal{P}_{\Gamma_s}$, we rewrite system~\eqref{mainsyslin} with $\lambda=0$ as the following abstract evolution equation
\begin{equation}
\frac{\p Z}{\p t}  - \mathcal{P}_{\Gamma_s}(\mu\divg_{\Gamma_s} \nabla_{\Gamma_s} Z) = \mathcal{P}_{\Gamma_s} G 
\quad\text{ in } (0,T), \qquad Z(0) = Z_0, \label{eq-mani}
\end{equation}
with $Z_0 \in \HH^{2}(\Gamma_s)/\R^2$. Composing~\eqref{eq-mani} by $\nabla_{\Gamma_s}$, we obtain an equation dealing with $\nabla_{\Gamma_s}Z$ as unknown:
\begin{equation}
\frac{\p \nabla_{\Gamma_s} Z}{\p t}  - \mu \nabla_{\Gamma_s}\mathcal{P}_{\Gamma_s}(\divg_{\Gamma_s}\nabla_{\Gamma_s} Z) =
\nabla_{\Gamma_s}\mathcal{P}_{\Gamma_s} G \quad \text{ in } (0,T), \qquad \nabla_{\Gamma_s}Z(0) = \nabla_{\Gamma_s}Z_0.
\label{op-formu-super}
\end{equation}
The interest of this formulation is that the following linear operator is self-adjoint:
\begin{equation}
\begin{array} {rccl}
\mathcal{A}: & \nabla_{\Gamma_s}\HH^{5/2}(\Gamma_s) =: D(\mathcal{A}) & \rightarrow & \nabla_{\Gamma_s}\HH^{3/2}(\Gamma_s) \\
& \nabla_{\Gamma_s} Z & \mapsto & \mu\nabla_{\Gamma_s}
\mathcal{P}_{\Gamma_s}(\divg_{\Gamma_s}\nabla_{\Gamma_s} Z) 
\end{array}\label{def-operator}
\end{equation}
For~$\nabla_{\Gamma_s} Z \in \nabla_{\Gamma_s}\HH^{\kappa}(\Gamma_s)$ we consider the norm $\| \nabla_{\Gamma_s} Z \|_{\mathds{H}^{\kappa-1}(\Gamma_s)}$, with $\kappa\geq 1$. Let us derive the fundamental properties of $\mathcal{A} = \mu \nabla_{\Gamma_s} \mathcal{P}_{\Gamma_s} \divg_{\Gamma_s}$.

\begin{proposition} \label{prop-analytic}
The operator $(\mathcal{A}, D(\mathcal{A}))$ is self-adjoint, dissipative, and thus infinitesimal generator of an analytic semigroup of contraction on $\nabla_{\Gamma_s}\HH^{5/2}(\Gamma_s)$.
\end{proposition}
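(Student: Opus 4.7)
My plan is to exploit the factorization $\mathcal{A} = \mu\, \nabla_{\Gamma_s}\circ \mathcal{P}_{\Gamma_s}\circ \divg_{\Gamma_s}$, together with the fact that $\divg_{\Gamma_s}$ is (up to a sign) the $\LL^2$-adjoint of $\nabla_{\Gamma_s}$ via the Stokes formula~\eqref{Stokes-mani}, in order to reduce everything to properties already established for $\mathcal{P}_{\Gamma_s}$ in Proposition~\ref{prop-PS}. The classical fact I would invoke is that a densely-defined, self-adjoint operator which is bounded above on a Hilbert space generates an analytic semigroup of contractions, via the spectral theorem (equivalently, Hille--Yosida combined with a sector estimate). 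The argument thus splits into three ingredients: symmetry, dissipativity, and a range condition which upgrades symmetry to self-adjointness.

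\textbf{Core computation.} Taking $\varphi = \nabla_{\Gamma_s} Z_\varphi$ and $\psi = \nabla_{\Gamma_s} Z_\psi$ in $D(\mathcal{A})$, I would apply the tangential Stokes formula~\eqref{Stokes-mani} twice and sandwich in the self-adjointness of $\mathcal{P}_{\Gamma_s}$:
\begin{align*}
\langle \mathcal{A}\varphi, \psi \rangle_{\LL^2(\Gamma_s)}
&= \mu \langle \nabla_{\Gamma_s}\mathcal{P}_{\Gamma_s}(\divg_{\Gamma_s}\varphi), \psi \rangle_{\LL^2(\Gamma_s)}
= -\mu \langle \mathcal{P}_{\Gamma_s}(\divg_{\Gamma_s}\varphi), \divg_{\Gamma_s}\psi \rangle_{\LL^2(\Gamma_s)} \\
&= -\mu \langle \divg_{\Gamma_s}\varphi, \mathcal{P}_{\Gamma_s}(\divg_{\Gamma_s}\psi) \rangle_{\LL^2(\Gamma_s)}
= \langle \varphi, \mathcal{A}\psi \rangle_{\LL^2(\Gamma_s)},
\end{align*}
yielding symmetry on $D(\mathcal{A})$. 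Specializing to $\psi = \varphi$ and invoking the non-negativity of $\mathcal{P}_{\Gamma_s}$ (Proposition~\ref{prop-PS}) immediately gives $\langle \mathcal{A}\varphi, \varphi \rangle_{\LL^2(\Gamma_s)} \leq 0$, which is precisely the dissipativity required by Lumer--Phillips.

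\textbf{Range condition and main obstacle.} To upgrade symmetry to self-adjointness, I would establish the range condition $\mathrm{Range}(I - \mathcal{A}) = H$ on the appropriate Hilbert pivot $H$, via Lax--Milgram applied to the symmetric bilinear form
\begin{equation*}
a(\varphi,\psi) := \langle \varphi,\psi\rangle_{\LL^2(\Gamma_s)} + \mu\langle \mathcal{P}_{\Gamma_s}(\divg_{\Gamma_s}\varphi),\divg_{\Gamma_s}\psi \rangle_{\LL^2(\Gamma_s)}.
\end{equation*}
The hard part will be the coercivity of $a$: it requires controlling the full $\varphi$-norm by its image under $\mathcal{P}_{\Gamma_s}^{1/2}\circ\divg_{\Gamma_s}$, and this is obstructed by $\mathrm{Ker}(\mathcal{P}_{\Gamma_s}) = \mathrm{span}(n_s)$ from Proposition~\ref{prop-PS}. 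This is precisely why the displacements are considered modulo $SE(2)$ throughout the paper; once the quotient kills the rigid-motion obstruction, coercivity follows from a Poincar\'e-type inequality on the closed curve $\Gamma_s$ combined with the strict positivity of $\mathcal{P}_{\Gamma_s}$ on the complement of its kernel. With self-adjointness and non-positivity in hand, analyticity of the generated semigroup on $H$ is then a direct consequence of the spectral theorem for self-adjoint operators whose spectrum lies in $(-\infty, 0]$, concluding the proof.
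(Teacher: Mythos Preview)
Your core computation---using the Stokes formula~\eqref{Stokes-mani} to reduce $\langle \mathcal{A}\varphi,\psi\rangle$ to $-\mu\langle \mathcal{P}_{\Gamma_s}(\divg_{\Gamma_s}\varphi),\divg_{\Gamma_s}\psi\rangle$ and then invoking Proposition~\ref{prop-PS}---is exactly what the paper does. You are in fact more scrupulous than the paper: you correctly flag that symmetry on $D(\mathcal{A})$ does not by itself yield self-adjointness and that a range condition is needed, whereas the paper's proof simply asserts self-adjointness from the symmetric form and cites~\cite{Bensoussan}. The paper effectively supplies the missing range argument only in the \emph{next} proposition (compactness of the resolvent), where $(\lambda I-\mathcal{A})$ is inverted via Lax--Milgram and elliptic regularity.

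One correction to your diagnosis: coercivity of your form $a$ is not the hard part. Since $\mathcal{P}_{\Gamma_s}\geq 0$, you have $a(\varphi,\varphi)\geq \|\varphi\|_{\LL^2(\Gamma_s)}^2$ immediately, so Lax--Milgram gives a weak solution at once---no quotient by $SE(2)$ or kernel analysis is needed at this stage. The actual work is the \emph{regularity} step: showing that the Lax--Milgram solution lies in $D(\mathcal{A})=\nabla_{\Gamma_s}\HH^{5/2}(\Gamma_s)$. The paper handles this by invoking elliptic regularity (Grisvard) to lift $\nabla_{\Gamma_s}Z$ from $\mathbb{L}^2$ to $\mathbb{H}^{3/2}$. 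Your sketch would go through once you replace the coercivity discussion with this regularity argument.
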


\begin{proof}
The Green's formula~\eqref{Stokes-mani2} shows that $\nabla_{\Gamma_s}^\ast = -\divg_{\Gamma_s}$. 
For~$Z_1$,\, $Z_2 \in \HH^{5/2}(\Gamma_s)$ we have
\begin{equation*}
\begin{array}{rcl}
\langle \mathcal{A}\nabla_{\Gamma_s} Z_1 , \nabla_{\Gamma_s} Z_2 \rangle_{\mathds{L}^2(\Gamma_s)} & = &
\mu 
\langle \mathcal{P}_{\Gamma_s}(\divg_{\Gamma_s}  \nabla_{\Gamma_s} Z_1) , (\nabla_{\Gamma_s})^{\ast} \nabla_{\Gamma_s} Z_2 \rangle_{\LL^2(\Gamma_s)}\\
& = & -\mu 
\langle \mathcal{P}_{\Gamma_s}(\divg_{\Gamma_s}  \nabla_{\Gamma_s} Z_1) , \divg_{\Gamma_s}\nabla_{\Gamma_s} Z_2 \rangle_{\LL^2(\Gamma_s)}\\
& = & -\mu 
\langle \mathcal{P}_{\Gamma_s}(\Delta_{\Gamma_s} Z_1) , \Delta_{\Gamma_s} Z_2 \rangle_{\LL^2(\Gamma_s)}.
\end{array}
\end{equation*}
Using Proposition~\ref{prop-PS}, we see that $\mathcal{A}$ is self-adjoint and dissipative.  Consequently, from~\cite[Chapter~1, Proposition~2.11]{Bensoussan}, the operator $(\mathcal{A},D(\mathcal{A}))$ generates an analytic semigroup of contraction.
\end{proof}


\begin{proposition} \label{prop-rescompact}
The resolvent of~$\mathcal{A}$ is compact.
\end{proposition}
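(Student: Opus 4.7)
The plan is to deduce compactness of the resolvent from the classical characterization: for a closed operator with non-empty resolvent set on a Hilbert space, the resolvent is compact if and only if the domain, equipped with the graph norm, embeds compactly into the ambient space. Here the ambient space in which $\mathcal{A}$ acts is a closed subspace of $\HH^{3/2}(\Gamma_s)$ (modulo the kernel directions $SE(2)$ and $\mathrm{span}(n_s)$ inherited from $\mathcal{P}_{\Gamma_s}$), and $D(\mathcal{A}) = \nabla_{\Gamma_s}\HH^{5/2}(\Gamma_s)$.

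First, by Proposition~\ref{prop-analytic} the generator $\mathcal{A}$ has non-empty resolvent set, so I would fix $\lambda > 0$ and consider $R_\lambda := (\lambda I - \mathcal{A})^{-1}$, continuous from the ambient space into $D(\mathcal{A})$ endowed with the graph norm. Next, I would show that this graph norm dominates the natural $\HH^{5/2}$-quotient norm of the primitive $Z$, so that $D(\mathcal{A})$ embeds continuously into $\HH^{5/2}(\Gamma_s)/SE(2)$. Indeed, given $v = \nabla_{\Gamma_s} Z \in D(\mathcal{A})$, the condition $\mathcal{A}v \in \HH^{3/2}(\Gamma_s)$ is equivalent to $\mu\mathcal{P}_{\Gamma_s}(\Delta_{\Gamma_s}Z) \in \HH^{5/2}(\Gamma_s)$ up to the kernel direction $n_s$ (by Proposition~\ref{prop-PS}). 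The elliptic regularity provided by Proposition~\ref{prop-op-reg} for the transmission Stokes system then upgrades $\Delta_{\Gamma_s}Z$ from $\HH^{-1/2}(\Gamma_s)$ to $\HH^{1/2}(\Gamma_s)$, and standard elliptic regularity for the Laplace-Beltrami operator on the smooth compact one-manifold $\Gamma_s$ yields $Z \in \HH^{5/2}(\Gamma_s)/SE(2)$ with the quantitative estimate.

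Finally, since $\Gamma_s$ is a smooth compact Riemannian manifold, the Rellich-Kondrachov theorem gives the compact embedding $\HH^{5/2}(\Gamma_s) \hookrightarrow \HH^{3/2}(\Gamma_s)$, hence $\nabla_{\Gamma_s}\HH^{5/2}(\Gamma_s) \hookrightarrow \HH^{3/2}(\Gamma_s)$ is compact. Composing the bounded operator $R_\lambda$ with values in $D(\mathcal{A})$ (equipped with the graph norm equivalent to the $\HH^{5/2}$-quotient norm) with this compact embedding yields that $R_\lambda$ is compact, which by the resolvent identity implies that $(\mu I - \mathcal{A})^{-1}$ is compact for every $\mu$ in the resolvent set. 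The main obstacle I expect is Step 2: carefully handling the one-dimensional kernel $\mathrm{span}(n_s)$ of $\mathcal{P}_{\Gamma_s}$ and the finite-dimensional $SE(2)$ quotient when inverting the chain $\mathcal{P}_{\Gamma_s}\circ \Delta_{\Gamma_s}$ so as to extract full $\HH^{5/2}$ regularity of $Z$ from the graph norm, rather than merely of $\mathcal{P}_{\Gamma_s}(\Delta_{\Gamma_s}Z)$; this amounts to combining injectivity of $\mathcal{P}_{\Gamma_s}$ on the orthogonal complement of $n_s$ with the Fredholm properties of $\Delta_{\Gamma_s}$ on $\Gamma_s$.
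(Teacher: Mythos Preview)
Your overall strategy---showing that the resolvent is bounded into a higher-order Sobolev space and then invoking Rellich--Kondrachov on the compact manifold $\Gamma_s$---is the same as the paper's. The difference lies in how the key regularity gain is obtained, and here your route has the gap you yourself flagged.

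You want to recover $Z\in\HH^{5/2}(\Gamma_s)$ from $\mathcal{A}(\nabla_{\Gamma_s}Z)\in\HH^{3/2}(\Gamma_s)$, i.e.\ from $\mathcal{P}_{\Gamma_s}(\Delta_{\Gamma_s}Z)\in\HH^{5/2}(\Gamma_s)$. To extract regularity of $\Delta_{\Gamma_s}Z$ from regularity of its image under $\mathcal{P}_{\Gamma_s}$ you would need a \emph{Dirichlet-to-Neumann} bound for the transmission Stokes problem, not the Neumann-to-Dirichlet bound of Proposition~\ref{prop-op-reg}. That proposition only says that regular input $G$ gives a regular trace $\mathcal{P}_{\Gamma_s}G$; it does not give the converse. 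Establishing the inverse bound is possible but is an additional ingredient, not a consequence of anything already proved in the paper.

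The paper avoids this inversion entirely. It works directly with the resolvent equation $\lambda\nabla_{\Gamma_s}Z-\mathcal{A}\nabla_{\Gamma_s}Z=\nabla_{\Gamma_s}W$: pairing against $\nabla_{\Gamma_s}Z$ and using~\eqref{Stokes-mani} together with the energy identity for $\mathcal{P}_{\Gamma_s}$ gives a coercive bilinear form on $\nabla_{\Gamma_s}\HH^{1/2}(\Gamma_s)$ for $\lambda>0$, so Lax--Milgram produces the inverse, and then standard elliptic regularity (the paper cites Grisvard) upgrades $\nabla_{\Gamma_s}Z$ to $\mathbb{H}^{3/2}(\Gamma_s)$ with the quantitative bound $\|\nabla_{\Gamma_s}Z\|_{\mathbb{H}^{3/2}}\le C\lambda^{-1}\|\nabla_{\Gamma_s}W\|_{\mathbb{H}^{1/2}}$. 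Compactness is then immediate. The point is that solving the full resolvent equation variationally produces the regularity of $Z$ in one step, without ever having to peel off $\mathcal{P}_{\Gamma_s}$ by itself.
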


\begin{proof} 
Let us show that there exists $\lambda \in \R$ such that $\lambda \Id - \mathcal{A}$ is invertible. Let be $\nabla_{\Gamma_s}W\in \nabla_{\Gamma_s} \HH^{5/2}(\Gamma_s) \subset \mathds{H}^{3/2}(\Gamma_s)$ and consider the following system:
\begin{equation}
\lambda \nabla_{\Gamma_s}Z - \mathcal{A}\nabla_{\Gamma_s}Z = \nabla_{\Gamma_s} W. \label{eq-resolvent}
\end{equation}
Taking the scalar product of this equation by $\nabla_{\Gamma_s}Z$, and using the Green's formula~\eqref{Stokes-mani2}, we obtain
\begin{equation*}
\lambda \| \nabla_{\Gamma_s}Z \|_{\mathds{L}^2(\Gamma_s)}^2 
+\mu \langle \mathcal{P}_{\Gamma_s}(\divg_{\Gamma_s}\nabla_{\Gamma_s} Z) , \divg_{\Gamma_s}\nabla_{\Gamma_s}Z\rangle_{\LL^2(\Gamma_s)} =  
\langle \nabla_{\Gamma_s} W , \nabla_{\Gamma_s} Z \rangle_{\mathds{L}^2(\Gamma_s)}.
\end{equation*}
Further, from the definition of~$\mathcal{P}_{\Gamma_s}$, we introduce $u^\pm$ the solution of
\begin{equation}
\left\{ \begin{array} {rcl}
-\divg \sigma(u^\pm,p^\pm) = 0 \quad \text{ and } \quad
\divg u^\pm = 0 & & \text{in } \Omega_s^\pm, \\
u^+ = 0 & & \text{on } \p \Omega, \\
\left[ u\right] = 0 \quad \text{ and } \quad 
-\left[\sigma(u,p)\right]n_s = \mu \divg_{\Gamma_s} \nabla_{\Gamma_s} Z & & \text{on } \Gamma_s,
\end{array} \right.
\label{sys-Lax}
\end{equation}
and by integration by parts we deduce
\begin{equation}
\lambda \| \nabla_{\Gamma_s}Z \|_{\mathds{L}^2(\Gamma_s)}^2 +
2\nu\left( \|\varepsilon(u^+)\|^2_{\mathbb{L}^2(\Omega^+)}+
\|\varepsilon(u^-)\|^2_{\mathbb{L}^2(\Omega^-)}
\right) = \langle \nabla_{\Gamma_s} W , \nabla_{\Gamma_s} Z \rangle_{\mathds{L}^2(\Gamma_s)}. \label{est-Lax}
\end{equation}
Introduce the bilinear form
\begin{equation*}
\begin{array} {rccl}
a: & \nabla_{\Gamma_s} \HH^1(\Gamma_s) \times \nabla_{\Gamma_s} \HH^1(\Gamma_s) & \rightarrow & \R \\
 & (\nabla_{\Gamma_s} Z_1 , \nabla_{\Gamma_s} Z_2) & \mapsto & 
\lambda \langle \nabla_{\Gamma_s}Z_1, \nabla_{\Gamma_s}Z_2 \rangle_{\mathds{L}^2(\Gamma_s)}\\[5pt]
& & & +
2\nu\left( \langle\varepsilon(u_1^+),\varepsilon(u_2^+\rangle_{\mathbb{L}^2(\Omega^+)}+
\langle\varepsilon(u_1^-), \varepsilon(u_2^-)\rangle_{\mathbb{L}^2(\Omega^-)} \right),
\end{array}
\end{equation*}
where $u_1^\pm$ and $u_2^\pm$ are solutions of~\eqref{sys-Lax} corresponding to $Z=Z_1$ and $Z= Z_2$, respectively. It satisfies
\begin{equation*}
a(\nabla_{\Gamma_s}Z,\nabla_{\Gamma_s}Z) \geq \lambda \| \nabla_{\Gamma_s} Z \|_{\mathds{L}^2(\Gamma_s)}^2 ,
\end{equation*}
and thus, for $\lambda>0$, it is coercive. Introduce the linear form $b:\nabla_{\Gamma_s} \HH^1(\Gamma_s) \ni\nabla_{\Gamma_s} Z \mapsto \langle \nabla_{\Gamma_s} W , \nabla_{\Gamma_s} Z \rangle_{\mathds{L}^2(\Gamma_s)}$, which is clearly continuous. We consider the variational formulation of~\eqref{eq-resolvent} as follows:
\begin{equation}
\text{Find $\nabla_{\Gamma_s}Z \in \nabla_{\Gamma_s}\HH^1(\Gamma_s)$ such that $a(\nabla_{\Gamma_s}Z, \nabla_{\Gamma_s} \tilde{Z}) = b(\nabla_{\Gamma_s}\tilde{Z})$ for all $\nabla_{\Gamma_s}\tilde{Z} \in \nabla_{\Gamma_s}\HH^1(\Gamma_s)$.}
\label{form-var}
\end{equation}
From the Lax-Milgram theorem there exists a unique $\nabla_{\Gamma_s}Z \in \nabla_{\Gamma_s}\mathbf{H}^1(\Gamma_s)$ solution of~\eqref{form-var}, and so satisfying~\eqref{eq-resolvent}. Note that it is sufficient to assume $\nabla_{\Gamma_s} W \in \nabla_{\Gamma_s}\HH^1(\Gamma_s)$ for obtaining $\nabla_{\Gamma_s}Z \in \nabla_{\Gamma_s} \HH^1(\Gamma_s)$. Choosing $\nabla_{\Gamma_s}\tilde{Z}=  \nabla_{\Gamma_s} Z$ in~\eqref{form-var}, with the Cauchy-Schwarz inequality we get the following estimate
\begin{equation} \label{est-resolvent1}
\| \nabla_{\Gamma_s} Z\|_{\mathds{L}^2(\Gamma_s)} \leq \frac{C}{\lambda} 
\|\nabla_{\Gamma_s} W\|_{\mathds{L}^2(\Gamma_s)}.
\end{equation}
Next, if we assume $\nabla_{\Gamma_s} W \in \nabla_{\Gamma_s} \HH^{2}(\Gamma_s)$, let us prove that $\nabla_{\Gamma_s} Z \in \nabla_{\Gamma_s} \HH^{2}(\Gamma_s)$ too. Since~$\nabla_{\Gamma_s}Z \in \nabla_{\Gamma_s} \HH^1(\Gamma_s)$, the identity
\begin{equation*}
-\mathcal{A} \nabla_{\Gamma_s}Z = \nabla_{\Gamma_s} W - \lambda \nabla_{\Gamma_s}Z
\end{equation*}
yields
\begin{equation}
\| \nabla_{\Gamma_s} \mathcal{P}_{\Gamma_s} \divg_{\Gamma_s} \nabla_{\Gamma_s} Z\|_{\mathds{L}^2(\Gamma_s)} = 
\|\mathcal{A} \nabla_{\Gamma_s} Z\|_{\mathds{L}^2(\Gamma_s)}
\leq \|\nabla_{\Gamma_s} W\|_{\mathds{L}^2(\Gamma_s)} + 
\lambda \|\nabla_{\Gamma_s} Z \|_{\mathds{L}^2(\Gamma_s)}
\leq C\| \nabla_{\Gamma_s} W\|_{\mathds{L}^2(\Gamma_s)}, 
\label{slava}
\end{equation}
where we used~\eqref{est-resolvent1}. Therefore $\mathcal{P}_{\Gamma_s} \divg_{\Gamma_s} \nabla_{\Gamma_s} Z \in \HH^1(\Gamma_s)$, which means that $u^\pm_{|\Gamma_s} \in \HH^1(\Gamma_s)$ in system~\eqref{sys-Lax}, leading to $u^\pm \in \HH^{3/2}(\Omega_s^\pm)$, and consequently to $-\left[\sigma(u,p)\right]n_s = \mu \divg_{\Gamma_s} \nabla_{\Gamma_s} Z 
\in \LL^2(\Gamma_s)$. Furthermore, using classical elliptic estimates for Stokes problems with prescribed non-homogeneous Dirichlet boundary conditions (see~\cite[Lemma~6.1, Chapter~IV]{Galdi}, the estimates in fractional spaces can be obtained by linear interpolation), we estimate
\begin{equation*}
\begin{array} {rcl}
\| \Delta_{\Gamma_s}Z \|_{\LL^2(\Gamma_s)} = \| \divg_{\Gamma_s} \nabla_{\Gamma_s} Z\|_{\LL^2(\Gamma_s)} & = & 
\| \left[\sigma(u,p)\right]n_s \|_{\LL^2(\Gamma_s)} \\
& \leq  & C \left(\| u\|_{\mathbf{H}^{3/2}(\Omega_s^\pm)/\R^2} + \| \nabla p\|_{\H^{-1/2}(\Omega_s^\pm)/\R^2} \right) \\[5pt]
& \leq & C\| u^\pm_{|\Gamma_s} \|_{\mathbf{H}^1(\Gamma_s)/\R^2} = 
C\| \mathcal{P}_{\Gamma_s} \divg_{\Gamma_s} \nabla_{\Gamma_s}Z \|_{\mathbf{H}^1(\Gamma_s)/\R^2} \\
& \leq & C\|\nabla_{\Gamma_s} \mathcal{P}_{\Gamma_s} \divg_{\Gamma_s} \nabla_{\Gamma_s}Z \|_{\mathds{L}^2(\Gamma_s)} 
= C\|\mathcal{A} \nabla_{\Gamma_s}Z \|_{\mathds{L}^2(\Gamma_s)}
\\
& \leq & C\| \nabla_{\Gamma_s} W\|_{\mathds{L}^2(\Gamma_s)},
\end{array}
\end{equation*}
where we have used the Poincar\'e inequality~\eqref{eq-rigidity} and estimate~\eqref{slava} above. Next, using~\eqref{eq-garding}, we deduce
\begin{equation}
\| \nabla_{\Gamma_s} Z \|_{\mathds{H}^1(\Gamma_s)} \leq 
\|Z\|_{\HH^2(\Gamma_s)/\R^2} \leq 
C\|\nabla_{\Gamma_s} W \|_{\mathds{L}^2(\Gamma_s)}.
\label{slava1}
\end{equation}
We proceed similarly in order to estimate
\begin{equation*}
\begin{array} {rcl}
\| \nabla_{\Gamma_s} Z\|_{\mathds{H}^2(\Gamma_s)} \leq \|Z\|_{\HH^3(\Gamma_s)} \leq 
\| \Delta_{\Gamma_s}Z \|_{\HH^1(\Gamma_s)}  & = & 
\| \left[\sigma(u,p)\right]n_s \|_{\HH^1(\Gamma_s)} \\
& \leq  & C \left(\| u\|_{\mathbf{H}^{5/2}(\Omega_s^\pm)/\R^2} + \| \nabla p\|_{\H^{1/2}(\Omega_s^\pm)/\R^2} \right) \\[5pt]
& \leq & C\| u^\pm_{|\Gamma_s} \|_{\mathbf{H}^2(\Gamma_s)/\R^2} = 
C\| \mathcal{P}_{\Gamma_s} \divg_{\Gamma_s} \nabla_{\Gamma_s}Z \|_{\mathbf{H}^2(\Gamma_s)/\R^2} \\
& \leq & C\|\nabla_{\Gamma_s} \mathcal{P}_{\Gamma_s} \divg_{\Gamma_s} \nabla_{\Gamma_s}Z \|_{\mathds{H}^1(\Gamma_s)} 
= C\|\mathcal{A} \nabla_{\Gamma_s}Z \|_{\mathds{H}^1(\Gamma_s)}.
\end{array}
\end{equation*}
Again, the identity $-\mathcal{A} \nabla_{\Gamma_s}Z = \nabla_{\Gamma_s} W - \lambda \nabla_{\Gamma_s}Z$ yields
\begin{equation*}
\|\mathcal{A} \nabla_{\Gamma_s} Z\|_{\mathds{H}^1(\Gamma_s)}
\leq \|\nabla_{\Gamma_s} W\|_{\mathds{H}^1(\Gamma_s)} + 
\lambda \|\nabla_{\Gamma_s} Z \|_{\mathds{H}^1(\Gamma_s)}
\leq C(1+\lambda)\| \nabla_{\Gamma_s} W\|_{\mathds{H}^1(\Gamma_s)},
\end{equation*}
where we have used~\eqref{slava1}, and thus we deduce
\begin{equation}
\| \nabla_{\Gamma_s} Z\|_{\mathds{H}^2(\Gamma_s)} \leq
C(1+\lambda)\| \nabla_{\Gamma_s} W\|_{\mathds{H}^1(\Gamma_s)}.
\label{slava2}
\end{equation}
Combining~\eqref{slava1} and~\eqref{slava2}, by interpolation we obtain
\begin{equation*}
\| \nabla_{\Gamma_s} Z\|_{\mathds{H}^{3/2}(\Gamma_s)} \leq
C(1+\lambda)^{1/2}\| \nabla_{\Gamma_s} W\|_{\mathds{H}^{1/2}(\Gamma_s)},
\end{equation*}
which completes the proof.
\end{proof}

We deduce wellposedness for system~\eqref{eq-mani}.
\begin{theorem} \label{th-syslin}
Let be $0<T<\infty$. For $G \in \mathcal{G}_T(\Gamma_s)$ and $Z_0 \in \mathcal{Z}_0(\Gamma_s)$, the following system
\begin{equation*}
\begin{array} {rcl}
-\divg(u^\pm, p^\pm) = 0, \quad \text{ and } \quad
\divg u^\pm = 0 & &  \text{in } \Omega_s^\pm \times (0,T), \\
u^+ = 0 & & \text{on } \p \Omega \times (0,T), \\
u^\pm = \displaystyle \frac{\p Z}{\p t} \quad \text{ and } \quad 
-\left[\sigma(u,p)\right]n_s = \mu \divg_{\Gamma_s} \nabla_{\Gamma_s} Z + G & & 
\text{on } \Gamma_s \times (0,T), \\
Z(\cdot,0) = Z_0 & & \text{on } \Gamma_s,
\end{array}
\end{equation*}
admits a unique solution $Z\in \mathcal{Z}_T(\Gamma_s)$. Moreover, there exists a constant $C>0$, non-decreasing with respect to~$T$, such that
\begin{equation*}
\| Z \|_{\mathcal{Z}_T(\Gamma_s)} + 
\| Z \|_{\L^{\infty}(0,T;\HH^2(\Gamma_s))}
\leq C \left(
\|Z_0\|_{\mathcal{Z}_0(\Gamma_s)} + \|G \|_{\mathcal{G}_T(\Gamma_s)}
\right).
\end{equation*}
\end{theorem}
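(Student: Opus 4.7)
The plan is to recast the system as an abstract Cauchy problem driven by the operator $\mathcal{A}$ of Proposition~\ref{prop-analytic}, and then apply maximal $\L^2$-regularity for analytic semigroups in Hilbert spaces.

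First, using the Poincar\'e--Steklov operator $\mathcal{P}_{\Gamma_s}$, the conditions $u^\pm = \p Z/\p t$ and $-\left[\sigma(u,p)\right]n_s = \mu \Delta_{\Gamma_s} Z + G$ on $\Gamma_s$ eliminate $(u^\pm,p^\pm)$ and reduce the system to the evolution equation~\eqref{eq-mani}, namely
\[
\frac{\p Z}{\p t} = \mathcal{P}_{\Gamma_s}(\mu \Delta_{\Gamma_s} Z + G) \quad \text{on } \Gamma_s \times (0,T), \qquad Z(\cdot,0) = Z_0.
\]
Applying $\nabla_{\Gamma_s}$ and setting $Y := \nabla_{\Gamma_s} Z$, this is equivalent, modulo $\mathrm{Ker}(\nabla_{\Gamma_s}) = SE(2)$, to the abstract Cauchy problem
\[
\frac{\p Y}{\p t} - \mathcal{A} Y = \nabla_{\Gamma_s}\mathcal{P}_{\Gamma_s} G, \qquad Y(0) = \nabla_{\Gamma_s} Z_0.
\]

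Second, Proposition~\ref{prop-op-reg} ensures $\mathcal{P}_{\Gamma_s}: \HH^{1/2}(\Gamma_s) \to \HH^{3/2}(\Gamma_s)$ continuously, so the source belongs to $\L^2(0,T;\HH^{1/2}(\Gamma_s))$, with norm controlled by $\|G\|_{\mathcal{G}_T(\Gamma_s)}$; and $\nabla_{\Gamma_s} Z_0 \in \HH^1(\Gamma_s)$ sits in the interpolation space halfway between the natural base Hilbert space of $\mathcal{A}$ (gradient vector fields in $\HH^{1/2}(\Gamma_s)$) and its domain $D(\mathcal{A}) = \nabla_{\Gamma_s}\HH^{5/2}(\Gamma_s)$. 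Since $(\mathcal{A}, D(\mathcal{A}))$ is self-adjoint and generates an analytic semigroup of contraction (Proposition~\ref{prop-analytic}), the De Simon maximal $\L^2$-regularity theorem yields a unique solution $Y \in \L^2(0,T; D(\mathcal{A})) \cap \H^1(0,T; \nabla_{\Gamma_s}\HH^{1/2}(\Gamma_s))$ together with the bound
\[
\|Y\|_{\L^2(0,T;\HH^{3/2}(\Gamma_s))} + \|Y\|_{\H^1(0,T;\HH^{1/2}(\Gamma_s))} \leq C(T)\bigl(\|\nabla_{\Gamma_s} Z_0\|_{\HH^1(\Gamma_s)} + \|G\|_{\mathcal{G}_T(\Gamma_s)}\bigr),
\]
where $C(T)$ is non-decreasing in $T$, as is standard on bounded time intervals.

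Lifting $Y = \nabla_{\Gamma_s} Z$ back (well defined up to elements of $SE(2)$) places $Z$ in $\mathcal{Z}_T(\Gamma_s)$ with the announced estimate, and the $\L^\infty(0,T; \HH^2(\Gamma_s)/SE(2))$ bound follows from the continuous embedding $\mathcal{Z}_T(\Gamma_s) \hookrightarrow \mathcal{C}([0,T];\HH^2(\Gamma_s)/SE(2))$ recalled in Section~\ref{sec-notation}. Uniqueness can also be read directly from the energy identity of Lemma~\ref{lemma-Energy}, which with $G=0$ and $Z_0=0$ forces $Z\equiv 0$ modulo $SE(2)$. The velocity--pressure pair $(u^\pm,p^\pm)$ is then recovered pointwise in time by solving the stationary transmission Stokes problem~\eqref{syslin} with right-hand side $\mu \Delta_{\Gamma_s} Z + G$ via Corollary~\ref{coroinfsup} and Proposition~\ref{prop-op-reg}. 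The step I expect to be most delicate is the rigorous identification of the ambient Hilbert space on which $\mathcal{A}$ is realized together with its trace/interpolation space for the initial data: one has to ensure that the zero-mean-flux compatibility condition embedded in $\mathbf{W}$ (which reflects incompressibility and enclosed-volume conservation) is preserved by the evolution, and that the quotient by $SE(2)$ is consistent across all functional spaces involved.
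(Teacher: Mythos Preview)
Your proposal is correct and follows essentially the same route as the paper: the paper's own proof is a one-line reference to a maximal $\L^2$-regularity result for analytic semigroups (citing \cite[Proposition~3.3]{Tucsnak04}), applied to the abstract evolution equation for $\nabla_{\Gamma_s} Z$ driven by $\mathcal{A}$, followed by the remark that $Z$ is recovered from $\nabla_{\Gamma_s} Z$ up to a constant. Your invocation of De Simon's theorem is the same mechanism, and the additional details you supply (the embedding for the $\L^\infty$ bound, uniqueness via Lemma~\ref{lemma-Energy}, recovery of $(u^\pm,p^\pm)$) are consistent with the surrounding material in the paper.
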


\begin{proof}
See for example~\cite[Proposition~3.3]{Tucsnak04}, that provides from~\eqref{op-formu-super} existence and uniqueness of~$\nabla_{\Gamma_s} Z$ satisfying
\begin{equation*}
\|\nabla_{\Gamma_s} Z \|_{\L^{2}(0,T;\mathds{H}^{3/2}(\Gamma_s))\cap \H^{1}(0,T;\mathds{H}^{1/2}(\Gamma_s))} 
+ \|\nabla_{\Gamma_s} Z \|_{\L^{\infty}(0,T;\mathds{H}^1(\Gamma_s))}
\leq C \left(
\|Z_0\|_{\mathcal{Z}_0(\Gamma_s)} + \|G \|_{\mathcal{G}_T(\Gamma_s)}
\right).
\end{equation*}
From $\nabla_{\Gamma_s} Z$, we retrieve $Z$ up to a constant, by using estimates~\eqref{eq-rigidity}-\eqref{eq-rigidity2} of Proposition~\ref{prop-rigidity}, leading to the announced result.
\end{proof}

\subsection{The non-homogeneous system} \label{sec-lifting}
We now address system~\eqref{mainsyslin} in finite-time horizon, for general right-hand-sides, and any $\lambda \geq 0$.
\begin{equation}
\begin{array} {rcl}
-\divg \sigma(u^\pm, p^\pm) = F^\pm  \quad \text{ and } \quad 
\divg u^\pm = \divg H^\pm & & \text{in } \Omega_s^\pm \times (0,T), \\
u^+ = 0 & &\text{on } \p \Omega \times (0,T), \\
u^+ = u^-  = \displaystyle \frac{\p Z}{\p t} - \lambda Z 
\quad \text{ and } \quad
-\left[\sigma(u,p) \right]n_s = \mu \divg_{\Gamma_s} \nabla_{\Gamma_s} Z + G 
& &
\text{on } \Gamma_s \times (0,T), \\
Z(\cdot,0 ) = Z_0 & & \text{on } \Gamma_s.
\end{array} \label{syslin-a}
\end{equation}
With $0<T<\infty$, we assume that $F^\pm \in \mathcal{F}_T(\Omega_s^\pm)$, $
G \in \mathcal{G}_T(\Gamma_s)$, $H^\pm \in \mathcal{U}_T(\Omega_s^\pm)$, and $Z_0 \in \mathcal{Z}_0(\Gamma_s)$. We use a {\it lifting} method: Let us describe a solution of~\eqref{syslin-a} as
\begin{equation*}
u^\pm = v^\pm + w^\pm, \quad p^\pm = q^\pm + \pi^\pm,
\end{equation*}
where $(w^\pm, \pi^\pm)$ are solutions of the following Stokes problems
\begin{equation}
\begin{array} {rcl}
-\divg(w^\pm, \pi^\pm) = F^\pm  \quad \text{ and } \quad 
\divg w^\pm = \divg H^\pm & & \text{in } \Omega_s^\pm \times (0,T), \\
w^+ = 0 & &\text{on } \p \Omega \times (0,T), \\
w^+ = w^- = 0 & & \text{on } \Gamma_s \times (0,T),
\end{array}
\label{syslin-b}
\end{equation}
and $(v^\pm, q^\pm)$ satisfy
\begin{equation}
\begin{array} {rcl}
-\divg \sigma(v^\pm, q^\pm) = 0 \quad \text{ and } \quad 
\divg v^\pm = 0 & & \text{in } \Omega_s^\pm \times (0,T), \\
v^+ = 0 & &\text{on } \p \Omega \times (0,T), \\
v^+ = v^- = \displaystyle \frac{\p Z}{\p t} - \lambda Z
\quad \text{ and } \quad
-\left[\sigma(v,q) \right]n_s = \mu \divg_{\Gamma_s}\nabla_{\Gamma_s} Z + G 
 + \left[\sigma(w,\pi) \right]n_s & &
\text{on } \Gamma_s \times (0,T), \\
Z(\cdot,0 ) = Z_0 & & \text{on } \Gamma_s.
\end{array}\label{syslin-c}
\end{equation}
Note that the equations of~\eqref{syslin-b} are uncoupled, as both Stokes systems can be considered in $\Omega_s^+$ and $\Omega_s^-$ independently. By considering $\overline{w}^\pm := w^\pm - H^\pm$, we eliminate the non-homogeneous divergence condition, and we reduce~\eqref{syslin-b} to standard Stokes problems with non-homogeneous Dirichlet condition:
\begin{equation}
\begin{array} {rcl}
-\divg \sigma(\overline{w}^\pm, \pi^\pm) = F^\pm + 2\nu \divg\varepsilon(H^\pm) 
\quad \text{ and } \quad 
\divg \overline{w}^\pm = 0 & & \text{in } \Omega_s^\pm, \\
\overline{w}^+ = 0 & &\text{on } \p \Omega, \\
\overline{w}^\pm = - H^\pm & & \text{on } \Gamma_s.
\end{array}
\label{syslin-b-bis}
\end{equation}
It is well-known that for almost every $t\in (0,T)$ there exists a unique solution $(\overline{w}^\pm,\pi^\pm)$ satisfying
\begin{equation*}
\| \overline{w}^\pm \|^2_{\HH^{2}(\Omega_s^\pm)} 
+ \| \pi \|^2_{\HH^1(\Omega_s^\pm)/\R} \leq
C\left(
\|F^\pm \|^2_{\LL^2(\Omega_s^\pm)} 
+ \| \divg \varepsilon(H^\pm)\|^2_{\LL^2(\Omega_s^\pm)}
+ \| H^\pm \|^2_{\HH^{3/2}(\Gamma_s)}
\right).
\end{equation*}
See for example~\cite[Lemma~6.1, Chapter~IV]{Galdi}. We deduce the same estimate for $(w^\pm,\pi) = (\overline{w}^\pm + H^\pm, \pi)$. Further, integrating in time this estimate, it follows from the trace theorem the following estimate
\begin{equation}
\| \left[ \sigma(w,\pi) \right]n_s \|_{\mathcal{G}_T(\Gamma_s)} \leq 
C\left(
\|F^\pm \|_{\mathcal{F}_T(\Omega_s^\pm)} 
+ \| H^\pm\|_{\mathcal{U}_T(\Omega_s^\pm)}
\right). \label{est-wpi}
\end{equation}
On the other side, equation~\eqref{syslin-c} admits the following operator formulation
\begin{equation*}
\frac{\p Z}{\p t} - \lambda Z - \mathcal{P}_{\Gamma_s}(\mu \divg_{\Gamma_s} \nabla_{\Gamma_s} Z) = 
\mathcal{P}_{\Gamma_s}\big(G  + \left[\sigma(w,\pi) \right]n_s  \big) 
\quad \text{in (0,T),} \quad Z(0) = Z_0.
\end{equation*}
Following Proposition~\ref{prop-rescompact} and Theorem~\ref{th-syslin}, system~\eqref{syslin-c} admits a unique solution, satisfying
\begin{equation*}
\| Z \|_{\mathcal{Z}_T(\Gamma_s)}
\leq C\left(
\|Z_0\|_{\mathcal{Z}_0(\Gamma_s)} + 
\|G\|_{\mathcal{G}_T(\Gamma_s)} +
\|\left[\sigma(w,\pi)\right]n_s\|_{\mathcal{G}_T(\Gamma_s)}
\right).
\end{equation*}
Combined with~\eqref{est-wpi}, this estimate yields
\begin{equation}
\| Z \|_{\mathcal{Z}_T(\Gamma_s)}
 \leq C\left(
\|Z_0\|_{\mathcal{Z}_0(\Gamma_s)} + 
\|G\|_{\mathcal{G}_T(\Gamma_s)} +
\|F^\pm\|_{\mathcal{F}_T(\Omega_s^\pm)} + 
\|H^\pm\|_{\mathcal{U}_T(\Omega_s^\pm)}
\right). \label{super-est-T}
\end{equation}
Wellposedness of the linear system~\eqref{syslin-a} is stated as follows:
\begin{proposition}
For $0<T<\infty$, if $F^\pm \in \mathcal{F}_T(\Omega_s^\pm)$, $H^\pm \in \mathcal{U}_T(\Omega_s^\pm)$, $G \in \mathcal{G}_T(\Gamma_s)$ and $Z_0 \in \mathcal{Z}_0(\Gamma_s)$, then there exists a unique solution $Z\in \mathcal{Z}_T(\Gamma_s)$ to system~\eqref{syslin-a}. It satisfies~\eqref{super-est-T}.
\end{proposition}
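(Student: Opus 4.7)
The plan is simply to assemble the two ingredients that have already been prepared in section~\ref{sec-lifting}. First I would treat the lifting system~\eqref{syslin-b}: for almost every $t\in(0,T)$, the substitution $\overline{w}^\pm = w^\pm - H^\pm$ reduces~\eqref{syslin-b} to the stationary Stokes problem~\eqref{syslin-b-bis} with homogeneous divergence, right-hand side in $\LL^2(\Omega_s^\pm)$, and Dirichlet trace $-H^\pm \in \HH^{3/2}(\Gamma_s)$. Classical elliptic regularity for the Stokes system provides a unique pair $(\overline{w}^\pm,\pi^\pm) \in \HH^2(\Omega_s^\pm)\times \H^1(\Omega_s^\pm)/\R$ and, after undoing the substitution and integrating in time, the bound~\eqref{est-wpi} for the jump $[\sigma(w,\pi)]n_s \in \mathcal{G}_T(\Gamma_s)$.

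Next I would address the residual system~\eqref{syslin-c}. It has the same structure as the system of Theorem~\ref{th-syslin}, except that (i) the surface source $G$ is replaced by the modified source $G + [\sigma(w,\pi)]n_s$, which still lies in $\mathcal{G}_T(\Gamma_s)$ thanks to~\eqref{est-wpi}, and (ii) the velocity trace is $\partial_t Z - \lambda Z$ rather than $\partial_t Z$. Taking the tangential gradient and applying $\mathcal{P}_{\Gamma_s}$ as in section~\ref{sec-linSH}, this rewrites as the abstract Cauchy problem
\begin{equation*}
\frac{\p \nabla_{\Gamma_s}Z}{\p t} - \lambda \nabla_{\Gamma_s}Z - \mathcal{A}\nabla_{\Gamma_s}Z = \nabla_{\Gamma_s}\mathcal{P}_{\Gamma_s}\big(G + [\sigma(w,\pi)]n_s\big), \quad \nabla_{\Gamma_s}Z(0) = \nabla_{\Gamma_s}Z_0.
\end{equation*}
Since $\lambda \Id$ is a bounded operator on $\HH^{3/2}(\Gamma_s)$, it is a bounded perturbation of $\mathcal{A}$, and by standard perturbation theory (as in~\cite{Bensoussan}) the operator $\lambda \Id + \mathcal{A}$ still generates an analytic semigroup on $\HH^{3/2}(\Gamma_s)$ with the same domain $D(\mathcal{A})$. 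The maximal-regularity statement that underlies Theorem~\ref{th-syslin} therefore applies verbatim, yielding a unique $Z\in \mathcal{Z}_T(\Gamma_s)$ together with the continuous dependence bound $\|Z\|_{\mathcal{Z}_T(\Gamma_s)} \leq C(\|Z_0\|_{\mathcal{Z}_0(\Gamma_s)} + \|G\|_{\mathcal{G}_T(\Gamma_s)} + \|[\sigma(w,\pi)]n_s\|_{\mathcal{G}_T(\Gamma_s)})$, with $C$ non-decreasing in $T$.

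Setting $u^\pm = v^\pm + w^\pm$, $p^\pm = q^\pm + \pi^\pm$ then produces a solution of~\eqref{syslin-a}, and chaining the last estimate with~\eqref{est-wpi} delivers exactly~\eqref{super-est-T}. Uniqueness follows from linearity: the difference of two solutions solves~\eqref{syslin-a} with zero data, so the corresponding lifting gives $w^\pm \equiv 0$, $\pi^\pm$ constant, and then the abstract equation for $Z$ is homogeneous and its unique solution in $\mathcal{Z}_T(\Gamma_s)$ is $0$ modulo $SE(2)$ (which can also be read off Lemma~\ref{lemma-Energy} applied to $e^{-\lambda t}Z$). I do not anticipate a serious obstacle: every building block (the Stokes lifting, the operator $\mathcal{P}_{\Gamma_s}$, the generator $\mathcal{A}$, and the $\mathcal{Z}_T$-maximal regularity) is already in place; the only point requiring a minute of care is that the lower-order term $-\lambda Z$ does not destroy the analytic-semigroup framework, which is immediate from boundedness.
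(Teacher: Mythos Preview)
Your proposal is correct and follows essentially the same route as the paper: existence via the lifting splitting $(w^\pm,\pi^\pm)$/$(v^\pm,q^\pm)$ already set up in section~\ref{sec-lifting}, and the estimate~\eqref{super-est-T} by chaining~\eqref{est-wpi} with the maximal-regularity bound for~\eqref{syslin-c}. The only cosmetic difference is in the uniqueness step: the paper derives the energy identity $\frac{\mu}{2}\frac{\d}{\d t}\|\nabla_{\Gamma_s}Z\|^2 + 2\nu(\|\varepsilon(u^+)\|^2+\|\varepsilon(u^-)\|^2) = \lambda\|\nabla_{\Gamma_s}Z\|^2$ directly and closes with Gr\"onwall, whereas you invoke abstract semigroup uniqueness (or, equivalently, your substitution $e^{-\lambda t}Z$ which reduces to Lemma~\ref{lemma-Energy}); both arguments are equivalent and equally short.
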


\begin{proof}
Existence is provided by the lifting method described above. For proving uniqueness, we use the linearity of the system, and assume $F^\pm = H^\pm = G = Z_0 = 0$. Then from Lemma~\ref{lemma-Energy} we obtain~\eqref{energy-estimate} with $g=0$, namely the following identity
\begin{equation*}
\frac{\mu}{2}\frac{\d}{\d t} \| \nabla_{\Gamma_s} Z \|^2_{\mathds{L}^2(\Gamma_s)}  
+2\nu \left(\|\varepsilon(u^+) \|^2_{\LLL^2(\Omega_s^+)} 
+ \|\varepsilon(u^-) \|_{\LLL^2(\Omega_s^-)} 
\right) = \lambda \| \nabla_{\Gamma_s} Z \|^2_{\mathds{L}^2(\Gamma_s)}.
\end{equation*}
The Gr\"onwall's lemma combined with with $Z_0 =0$ yields $Z\equiv 0$ (up to a constant of~$\R^2$), and concludes the proof.
\end{proof}

\section{Feedback operator for the linear system} \label{sec-feedback}

This section is devoted to the design of a feedback operator that stabilizes system~\eqref{mainsyslin} in infinite time horizon. Let us first study its controllability properties in finite time horizon.

\subsection{Approximate controllability} \label{sec-approx-cont}

Let be $0<T<\infty$. We consider system~\eqref{mainsyslin} with control $G$, with null data and initial condition:
\begin{equation}
\begin{array} {rcl}
-\divg \sigma(u^\pm,p^\pm) = 0 \quad \text{ and } \quad
\divg u^\pm = 0 & & \text{in } \Omega_s^\pm \times (0,T), \\
u^+ = 0 & & \text{on } \p \Omega \times (0,T), \\
u^\pm = \displaystyle \frac{\p Z}{\p t} 
\quad \text{ and } \quad  
-\left[ \sigma(u,p)\right]n = \mu \Delta_{\Gamma_s} Z + G & & \text{on } \Gamma_s \times (0,T), \\
Z(\cdot,0)  = 0 & & \text{on } \Gamma_s.
\end{array} \label{syslin0}
\end{equation}
Recall the definition of {\it approximate controllability} and {\it exact controllability} for linear evolution equations of type~\eqref{syslin0}, that we state in our context as follows:

\begin{definition}
Define the reachable set as
\begin{equation*}
R(T)  :=  \left\{\text{$Z(\cdot,T)$ such that $Z$ is solution of~\eqref{syslin0}} \mid G \in \mathcal{G}_T(\Gamma_s) \right\}.
\end{equation*}
We say that system~\eqref{syslin0} is approximately controllable if $R(T)$ is dense in $\LL^2(\Gamma_s)/\R^2$, or equivalently if $R(T)^{\perp} = \R^2$. We say that~\eqref{syslin0} is exactly controllable if $R(T)=\LL^2(\Gamma_s)/\R^2$.
\end{definition}
We obtain the following key result:
\begin{proposition} \label{prop-approx-cont}
System~\eqref{syslin0} is approximately controllable.
\end{proposition}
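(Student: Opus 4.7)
My plan is to argue by the standard duality method: approximate controllability in $\LL^2(\Gamma_s)/SE(2)$ is equivalent to a unique continuation property for the adjoint system. The key ingredients are the self-adjoint structure of the Poincar\'e--Steklov operator $\mathcal{P}_{\Gamma_s}$ (Proposition~\ref{prop-PS}) and of the operator $\mathcal{A}$ (Proposition~\ref{prop-analytic}), together with the explicit description $\mathrm{Ker}(\mathcal{P}_{\Gamma_s}) = \mathrm{span}(n_s)$, which will collapse the unique continuation problem to an elementary ODE in time.

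First I would set up the dual problem. Given $\zeta_T$ in the dual of the state space, let $\zeta$ denote the solution of the backward-in-time adjoint system; by self-adjointness the adjoint has the same structure as~\eqref{syslin0}, posed backward in time with final datum $\zeta_T$. Testing \eqref{syslin0} against $\zeta$, using $Z(0)=0$ and integrating in time and by parts on $\Gamma_s$ yields the duality identity
\begin{equation*}
\langle Z(T),\zeta_T\rangle_{\LL^2(\Gamma_s)} \;=\; \int_0^T \langle G,\mathcal{P}_{\Gamma_s}\zeta\rangle_{\HH^{-1/2}(\Gamma_s);\HH^{1/2}(\Gamma_s)}\,dt.
\end{equation*}
Hence $\zeta_T\in R(T)^\perp$ if and only if $\mathcal{P}_{\Gamma_s}\zeta \equiv 0$ on $(0,T)$.

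Next I would exploit the structure of $\mathrm{Ker}(\mathcal{P}_{\Gamma_s})$. By Proposition~\ref{prop-PS} the orthogonality condition forces $\zeta(t) = c(t)\,n_s$ for a.e.\ $t\in(0,T)$. Plugging this ansatz into the adjoint equation $-\partial_t\zeta = \mu\,\mathcal{P}_{\Gamma_s}(\Delta_{\Gamma_s}\zeta)$ and using that on the reference circle $\Delta_{\Gamma_s} n_s$ is itself a scalar multiple of $n_s$, and thus lies in $\mathrm{Ker}(\mathcal{P}_{\Gamma_s})$, the right-hand side vanishes identically, leaving $c'(t)\,n_s = 0$. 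Therefore $c$ is constant and $\zeta_T = c\,n_s$. The incompressibility in $\Omega_s^-$ gives the conservation $\int_{\Gamma_s} Z(t)\cdot n_s\,ds = 0$, so that $n_s$ independently lies in $R(T)^\perp$; after identification with the appropriate quotient, this reduces to $\zeta_T\in SE(2)$, proving approximate controllability.

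The main obstacle I anticipate is the rigorous setup of the adjoint evolution equation for the coupled interface/Stokes system in the correct Sobolev trace spaces, and the justification of the integration-by-parts formula in a way consistent with the quotient by $SE(2)$ and with the conservation law $\int Z\cdot n_s = 0$ inherited from the kernel of $\mathcal{P}_{\Gamma_s}$. The unique continuation step itself is short here, essentially because the kernel of $\mathcal{P}_{\Gamma_s}$ is one-dimensional and $n_s$ is an eigenfunction of $\Delta_{\Gamma_s}$ on a circle; for a non-spherical reference this step would instead require a genuine unique continuation theorem of Holmgren type for the coupled Stokes/interface system.
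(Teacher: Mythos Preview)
Your overall strategy---duality with the adjoint system followed by a unique continuation argument---is exactly the route the paper takes. There are, however, two places where your execution diverges from the paper and where one of them slips.

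First, the duality identity and the adjoint equation you write are not mutually consistent. The pairing $\langle Z(T),\zeta_T\rangle_{\LL^2(\Gamma_s)}=\int_0^T\langle G,\mathcal{P}_{\Gamma_s}\zeta\rangle\,dt$ is the abstract identity $\langle Z(T),\zeta_T\rangle=\int_0^T\langle G,B^\ast\zeta\rangle\,dt$ with $B=\mathcal{P}_{\Gamma_s}$, and it holds provided $\zeta$ solves the $\LL^2$-adjoint $-\partial_t\zeta=A^\ast\zeta=\mu\,\Delta_{\Gamma_s}\mathcal{P}_{\Gamma_s}\zeta$. You instead write $-\partial_t\zeta=\mu\,\mathcal{P}_{\Gamma_s}\Delta_{\Gamma_s}\zeta$, invoking the self-adjointness of Proposition~\ref{prop-analytic}; but that proposition concerns $\mathcal{A}=\mu\nabla_{\Gamma_s}\mathcal{P}_{\Gamma_s}\divg_{\Gamma_s}$ acting on $\nabla_{\Gamma_s}Z$, not $\mu\mathcal{P}_{\Gamma_s}\Delta_{\Gamma_s}$ on $\LL^2(\Gamma_s)$, and in the $\LL^2$ pivot the two operators $\mathcal{P}_{\Gamma_s}\Delta_{\Gamma_s}$ and $\Delta_{\Gamma_s}\mathcal{P}_{\Gamma_s}$ differ. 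Fortunately the fix makes your argument shorter, not longer: with the correct adjoint, $\mathcal{P}_{\Gamma_s}\zeta\equiv 0$ gives $\partial_t\zeta=-\mu\Delta_{\Gamma_s}(0)=0$ directly, so the detour through ``$\Delta_{\Gamma_s}n_s\in\mathrm{span}(n_s)$'' is unnecessary.

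Second, the paper does the integration by parts at the PDE level (testing the adjoint Stokes system~\eqref{sysadj} against $u^\pm$) and obtains the observation $\partial_t\zeta$ rather than $\mathcal{P}_{\Gamma_s}\zeta$; from $\partial_t\zeta\equiv 0$ it immediately reduces~\eqref{sysadj} to the stationary system~\eqref{sys-2nd} and invokes Lemma~\ref{lemma-Temam} to conclude $\zeta\in SE(2)$. Your endgame---arriving at $\zeta_T=c\,n_s$ and then appealing to ``identification with the appropriate quotient''---does not by itself land in $SE(2)$, since $n_s$ is not an infinitesimal rigid displacement. The cleaner way to close is exactly what the paper does: once you know $\partial_t\zeta=0$, feed the (now stationary) adjoint system into Lemma~\ref{lemma-Temam}. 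Your observation about the conserved quantity $\int_{\Gamma_s}Z\cdot n_s=0$ is correct and relevant to how the quotient should be set up, but it is not a substitute for that lemma.
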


\begin{proof}
Introduce $Z_T \in R(T)^\perp$, and the following adjoint system
\begin{equation}
\begin{array} {rcl}
\displaystyle  - \divg \sigma(\phi^\pm, \psi^\pm) = 0 
\quad \text{ and } \quad
\divg \phi^\pm = 0 & &  \text{in } \Omega_s^\pm \times (0,T),\\
\phi^+ = 0 & & \text{on } \p \Omega \times (0,T), \\
\phi^\pm =  \displaystyle -\frac{\p \zeta}{\p t} 
\quad \text{ and } \quad
-\left[ \sigma (\phi, \psi)\right]n_s =  \mu \Delta_{\Gamma_s} \zeta & & 
\text{on } \Gamma_s \times (0,T), \\
\zeta(T) = Z_T & & \text{on }\Gamma_s,
\end{array} \label{sysadj}
\end{equation}
with $(\phi^+,\psi^+, \phi^-, \psi^-, \zeta)$ as unknowns. Now consider $(u^+, p^+, u^-,p^-, Z)$ the solution of~\eqref{syslin0}. Taking the inner product in $\L^2(0,T;\LL^2(\Omega_s^\pm))$ of the left equation in the first line of~\eqref{sysadj} by $u^\pm$, by integration by parts we obtain for all $G \in \mathcal{G}_T(\Gamma_s)$
\begin{equation*}
\mu\left\langle  \nabla_{\Gamma_s} Z_T, \nabla_{\Gamma_s} Z(T)\right\rangle_{\mathds{L}^2(\Gamma_s)}  = 
\int_0^T \left\langle G, \frac{\p \zeta}{\p t}\right\rangle_{\LL^2(\Gamma_s)} \d t.
\end{equation*}
Since $Z_T \in R(T)^\perp$, this identity implies that $\displaystyle \frac{\p \zeta}{\p t} = 0$ in $\LL^2(\Gamma_s)$. System~\eqref{sysadj} then becomes system~\eqref{sys-2nd} of Lemma~\ref{lemma-Temam2}, which yields that $\zeta$ is a constant of~$\R^2$, and therefore $Z_T$ too, completing the proof.
\end{proof}

\subsection{Feedback operator}

\begin{theorem} \label{th-feedback}
For all $\lambda >0$ and $Z_0 \in \mathcal{Z}_0(\Gamma_s)$, there exists a finite-dimensional subspace $\mathds{H}_u^{(\lambda)}$ of $\nabla_{\Gamma_s}\HH^{5/2}(\Gamma_s)$, with orthogonal projection $P_{\lambda}: \nabla_{\Gamma_s}\HH^{5/2}(\Gamma_s) \rightarrow \mathds{H}_u^{(\lambda)}$, a finite-dimensional space $\Xi \subset \mathbf{H}^{1/2}(\Gamma_s)$ and a linear operator $\Pi_{\lambda} \in \mathcal{L}\left(\mathds{H}_u^{(\lambda)}, (\mathds{H}_u^{(\lambda)})^{\ast}\right)$ defining the feedback operator
\begin{equation*}
\mathcal{K}_{\lambda} :=  -\mathcal{P}_{\Gamma_s}
 (\nabla_{\Gamma_s})^{\ast}\Pi_{\lambda}P_{\lambda}
\in \mathcal{L} \left(\nabla_{\Gamma_s}\mathbf{H}^{5/2}(\Gamma_s) ,  \Xi \right)
\end{equation*}
such that the solution $Z$ of system~\eqref{mainsyslin} with $G = \mathcal{K}_{\lambda}\nabla_{\Gamma_s}Z$ satisfies $\left\|Z \right\|_{\mathcal{Z}_{\infty}(\Gamma_s)} \leq C_0$, where the constant $C_0$ depends only on $Z_0$. Further, the operator $\Pi_{\lambda}$ is the solution of the following finite-dimensional algebraic Riccati equation
\begin{equation}
\Pi_{\lambda} = \Pi_{\lambda}^{\ast} \succeq 0, \quad 
\Pi_{\lambda} \mathcal{A}_{\lambda} + \mathcal{A}_{\lambda}^{\ast} \Pi_{\lambda}
- \Pi_{\lambda} \mathcal{B}_{\lambda} \mathcal{B}_{\lambda}^{\ast} \Pi_{\lambda} + \I = 0, \label{Riccati}
\end{equation}
where we have introduced $
\mathcal{A}_{\lambda} = P_{\lambda}\mathcal{A} P_{\lambda} \in \mathcal{L}(\mathds{H}_u^{(\lambda)}, \mathds{H}_u^{(\lambda)})$ and $
\mathcal{B}_{\lambda} = P_{\lambda}\nabla_{\Gamma_s} \mathcal{P}_{\Gamma_s} \in \mathcal{L}(\mathbf{H}^{1/2}(\Gamma_s),\mathds{H}_u^{(\lambda)})$.
\end{theorem}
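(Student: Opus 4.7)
\textbf{Proof plan for Theorem~\ref{th-feedback}.} The strategy is to reduce the stabilization problem for the infinite-dimensional system~\eqref{mainsyslin} to a finite-dimensional LQ control problem on the unstable spectral subspace of the (shifted) generator. Indeed, section~\ref{sec-linSH} shows that~\eqref{mainsyslin} is equivalent to the abstract evolution equation
\begin{equation*}
\frac{\p Z}{\p t} = \left( \lambda I + \mathcal{A} \right) Z + \mathcal{P}_{\Gamma_s} G, \qquad Z(0) = Z_0,
\end{equation*}
where $\mathcal{A} = \mu\nabla_{\Gamma_s}\mathcal{P}_{\Gamma_s}\divg_{\Gamma_s}$ is self-adjoint dissipative with compact resolvent by Proposition~\ref{prop-analytic}. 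Hence the spectrum of $\mathcal{A}$ consists of real eigenvalues $\mu_k \to -\infty$, and only finitely many satisfy $\mu_k + \lambda \geq 0$. I denote by $\mathbf{H}_u^{(\lambda)}$ the (finite-dimensional) spectral subspace spanned by the corresponding eigenfunctions and by $P_\lambda$ the associated orthogonal projection of $\HH^{5/2}(\Gamma_s)$ onto $\mathbf{H}_u^{(\lambda)}$. Because $\mathcal{A}$ is self-adjoint, $P_\lambda$ commutes with $\mathcal{A}$ and the evolution splits as a direct sum of an ``unstable'' part on $\mathbf{H}_u^{(\lambda)}$ and a ``stable'' part on $(I-P_\lambda)\HH^{5/2}(\Gamma_s)$ whose restricted generator has spectrum in $(-\infty, 0)$.

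\textbf{Controllability of the projected system.} Setting $\mathcal{A}_\lambda := P_\lambda\mathcal{A} P_\lambda$ and $\mathcal{B}_\lambda := P_\lambda\mathcal{P}_{\Gamma_s}$, I next argue that the pair $(\mathcal{A}_\lambda + \lambda I, \mathcal{B}_\lambda)$ is controllable on $\mathbf{H}_u^{(\lambda)}$. Suppose $\mathbf{H}_u^{(\lambda)}$ contained a non-trivial $\mathcal{A}_\lambda^\ast$-invariant subspace in $\mathrm{Ker}(\mathcal{B}_\lambda^\ast)$; this would violate the Hautus test. Translated back to the full system, such an invariant mode would be an eigenfunction of $\mathcal{A}$ orthogonal (via $\mathcal{P}_{\Gamma_s}^\ast$) to the reachable set, contradicting the approximate controllability established in Proposition~\ref{prop-approx-cont}. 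Since $(\mathcal{A}_\lambda+\lambda I,\mathcal{B}_\lambda)$ is finite-dimensional and controllable, classical LQ theory yields a unique symmetric positive-semidefinite solution $\Pi_\lambda$ of the algebraic Riccati equation~\eqref{Riccati} such that the closed-loop matrix $\mathcal{A}_\lambda + \lambda I - \mathcal{B}_\lambda\mathcal{B}_\lambda^\ast\Pi_\lambda$ is Hurwitz.

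\textbf{Closing the loop on the full system.} Defining $\mathcal{K}_\lambda := -\mathcal{P}_{\Gamma_s}^\ast\Pi_\lambda P_\lambda$, plugging $G = \mathcal{K}_\lambda Z$ into the abstract equation and projecting by $P_\lambda$ gives $\frac{\p}{\p t}(P_\lambda Z) = (\mathcal{A}_\lambda + \lambda I - \mathcal{B}_\lambda\mathcal{B}_\lambda^\ast\Pi_\lambda)(P_\lambda Z)$, which decays exponentially. Projecting by $I-P_\lambda$ gives the stable-part dynamics driven by a finite-rank bounded perturbation coming from the unstable mode; since the unperturbed stable part generates an exponentially stable analytic semigroup and the perturbation is already integrable in time by the unstable-part estimate, a Duhamel/Gr\"onwall argument yields exponential decay of $(I-P_\lambda)Z$ as well. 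Combining both bounds with the maximal-regularity estimate from Section~\ref{sec-lifting} promotes the pointwise bound on $Z$ to the norm of $\mathcal{Z}_\infty(\Gamma_s)$, giving $\|Z\|_{\mathcal{Z}_\infty(\Gamma_s)} \leq C_0\|Z_0\|_{\mathcal{Z}_0(\Gamma_s)}$.

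\textbf{Main obstacle.} The delicate point is the Hautus argument translating the approximate controllability of the full system (only density of the reachable set in $\LL^2/SE(2)$) into genuine controllability of the finite-dimensional pair $(\mathcal{A}_\lambda+\lambda I,\mathcal{B}_\lambda)$: one has to verify that the unique continuation-type argument behind Proposition~\ref{prop-approx-cont} applies modewise to each eigenspace of $\mathcal{A}$, keeping in mind that eigenvectors lying along $\mathrm{span}(n_s) = \mathrm{Ker}(\mathcal{P}_{\Gamma_s})$ must be handled separately (they correspond precisely to the rigid motions modded out in $\mathcal{Z}_0(\Gamma_s)$). A secondary technical point is ensuring that the coupling of the stable part to the (rapidly decaying) unstable part preserves the high regularity $\HH^{5/2}$ in space required by $\mathcal{Z}_\infty(\Gamma_s)$, which follows from the analyticity of the semigroup together with the smoothing estimate~\eqref{est-resolvent}.
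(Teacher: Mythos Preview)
Your proposal is correct and follows essentially the same route as the paper: spectral splitting of the shifted generator into finite-dimensional unstable and infinite-dimensional stable parts, exact controllability of the projected unstable system inherited from the approximate controllability of Proposition~\ref{prop-approx-cont}, and then finite-dimensional LQ/Riccati theory to produce the feedback $\mathcal{K}_\lambda = -\mathcal{P}_{\Gamma_s}^\ast \Pi_\lambda P_\lambda$.

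One worthwhile comparison: the paper avoids the ``delicate point'' you flag. Rather than a Hautus/modewise argument, it simply observes that approximate controllability of the full system passes to the projected system (if $\zeta \in \mathbf{H}_u^{(\lambda)}$ annihilates the projected reachable set then it annihilates the full reachable set, hence $\zeta \in SE(2)$), and since the projected reachable set is dense in the finite-dimensional space $\mathbf{H}_u^{(\lambda)}$, it equals $\mathbf{H}_u^{(\lambda)}$. This sidesteps any eigenspace-by-eigenspace analysis and the separate treatment of $\mathrm{span}(n_s)$. Conversely, you are more careful than the paper on two points: you note that self-adjointness forces the eigenvalues to be real (the paper's machinery for complex conjugate pairs is superfluous here), and you spell out the Duhamel/Gr\"onwall step for the stable component driven by the finite-rank coupling, which the paper glosses over with the line ``the same property holds for $\mathcal{A}(\Id-P_\lambda)$''.
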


\begin{proof}
Let us consider the operator formulation of system~\eqref{mainsyslin}, namely 
\begin{equation} \label{eq-evol-lambda}
\frac{\p \nabla_{\Gamma_s}Z}{\p t} - (\mathcal{A}+\lambda \Id) \nabla_{\Gamma_s}Z = \nabla_{\Gamma_s}\mathcal{P}_{\Gamma_s}G \quad \text{ in } (0,T), 
\qquad 
\nabla_{\Gamma_s}Z(0) = \nabla_{\Gamma_s}Z_0,
\end{equation}
where $\mathcal{A}$ is defined in~\eqref{def-operator}. Recall that from~$\nabla_{\Gamma_s}Z$ we can retrieve~$Z$ (up to a constant) via Proposition~\ref{prop-rigidity}. We can choose $\lambda$ in the resolvent of~$\mathcal{A}$, without loss of generality. From Proposition~\ref{prop-analytic}, the spectrum of $\mathcal{A}$ is a discrete set of complex eigenvalues $(\lambda_i)_{i\mathbb{N}}$, contained in an angular domain $\left\{z\in \mathbb{C}\setminus \{0\} \mid \mathrm{arg}(\theta - z) \in (-\alpha, \alpha) \right\}$ where $\theta \in (0, \pi /2)$ and $\alpha \in \R$. We can order them such that
\begin{equation*}
\dots < \Re(\lambda_{N+1}) < - \lambda <  \Re(\lambda_{N}) <
\cdots < \Re(\lambda_{2}) < \Re(\lambda_{1}) < 0.
\end{equation*}
Furthermore, the generalized eigenspace associated with each eigenvalue is of finite dimension (see for instance~\cite[Chapter~III, Theorem~6.29 page~187]{Kato}). Denoting by $\Lambda(\lambda_i)$ the real generalized eigenspace of $\lambda_i$ or $(\lambda_i, \overline{\lambda_i})$ whether $\Im(\lambda_i) = 0$ or not, respectively, we introduce the Hilbert spaces
\begin{equation*}
\mathds{H}_u^{(\lambda)} = \bigoplus_{i=1}^N \Lambda(\lambda_i), \quad 
\mathds{H}_s^{(\lambda)} = \bigoplus_{i=N+1}^\infty \Lambda(\lambda_i).
\end{equation*}
Let us explain what we mean by {\it real generalized eigenspace}: If $(e_j(\lambda_i))_{1\leq j \leq m(\lambda_i)}$ is a basis of the complex generalized eigenspace of $\lambda_i$, where $m(\lambda_i)$ denotes its multiplicity, then $\Lambda(\lambda_i)$ is generated by the family $\left\{ \Re(e_j(\lambda_i)) , \Im(e_j(\lambda_i)) \mid 1\leq j \leq m(\lambda_i)\right\}$. Note that $\mathds{H}_u^{(\lambda)}$, the space of {\it unstable modes}, is of finite dimension. Both $\mathds{H}_u^{(\lambda)}$ and $\mathds{H}_s^{(\lambda)}$ are invariant under $\mathcal{A}$. Denote by $P_{\lambda}$ the orthogonal projection on~$\mathds{H}_u^{(\lambda)}$, parallel to~$\mathds{H}_s^{(\lambda)}$. Projecting equation~\eqref{eq-evol-lambda}, with $\lambda = 0$, on $\mathds{H}_u^{(\lambda)}$ yields
\begin{equation}
\frac{\p P_{\lambda}\nabla_{\Gamma_s}Z}{\p t} - P_{\lambda}\mathcal{A}P_{\lambda}\nabla_{\Gamma_s}Z =  P_{\lambda} \nabla_{\Gamma_s}\mathcal{P}_{\Gamma_s}G \quad \text{ in } (0,T),
\qquad 
P_{\lambda}\nabla_{\Gamma_s}Z(0) = P_{\lambda} \nabla_{\Gamma_s}Z_0.
\label{eq-evol-0}
\end{equation}
The approximate controllability of system~\eqref{syslin0} obtained in Proposition~\ref{prop-approx-cont} implies that~\eqref{eq-evol-0} too is approximately controllable. Its reachable set is dense in $\mathds{H}_u^{(\lambda)}$, and since this space is of finite dimension, it is actually equal to $\mathds{H}_u^{(\lambda)}$. This means that equation~\eqref{eq-evol-0} is exactly controllable. From there, we use for instance the result of~\cite[Chapter~I, Theorem~2.9, page~35]{Zabczyk} stating that there exists a linear operator $K_{\lambda}$ defined on $\mathds{H}_u^{(\lambda)}$ such that $P_{\lambda}\mathcal{A}P_{\lambda} + P_{\lambda} \nabla_{\Gamma_s}\mathcal{P}_{\Gamma_s} K_{\lambda}$ is exponentially stable with $\lambda$ as a decay rate. Since the same property holds for $\mathcal{A}(\Id - P_{\lambda})$, we merely set $
\mathcal{K}_{\lambda} = \Re K_{\lambda}P_{\lambda}$ and $\Xi = \Re K_{\lambda}(\mathds{H}_u^{(\lambda)})$. Further, following~\cite{Sontag} (more specifically Lemma~8.4.1 page~381 and Theorem~41 page~384), we consider the following infinite time horizon optimal control problem: 
\begin{equation}
\displaystyle \inf_{G\in \Xi} \left\{\mathcal{J}(Z,G)\mid \text{ $Z$ satisfies~\eqref{syslin0}} \right\},
\label{pbopt}
\end{equation}
with $
\mathcal{J}(Z,G) = \displaystyle \frac{1}{2}\int_0^\infty \|P_{\lambda}\nabla_{\Gamma_s}Z\|_{\mathds{H}_u^{(\lambda)}}^2 \d t + \frac{1}{2} \int_0^{\infty} \|G\|_{\Xi}^2 \d t$. The first-order optimality conditions for Problem~\eqref{pbopt} lead to $G = - \mathcal{P}_{\Gamma_s}^{\ast}(\nabla_{\Gamma_s})^{\ast} \Pi_{\lambda} P_{\lambda} \nabla_{\Gamma_s}Z =\mathcal{P}_{\Gamma_s} \divg_{\Gamma_s} \Pi_{\lambda} P_{\lambda} \nabla_{\Gamma_s}Z$, where $\Pi_{\lambda} = \Pi_{\lambda}^{\ast} \succeq 0$ satisfies the Riccati equation~\eqref{Riccati}, finishing the proof.
\end{proof}

We deduce an estimate for the stabilized linear system with non-homogeneous right-hand-sides.

\begin{corollary} \label{coro-final}
Assume $Z_0 \in \mathcal{Z}_0(\Gamma_s)$, $F^\pm \in \mathcal{F}_{\infty}(\Omega_s^\pm)$, $
H^\pm \in \mathcal{U}_{\infty}(\Omega_s^\pm)$, and $G \in \mathcal{G}_{\infty}(\Gamma_s)$. Using the feedback operator $\mathcal{K}_{\lambda}$ obtained in Theorem~\ref{th-feedback}, there exists a unique solution $ Z \in  \mathcal{Z}_{\infty}(\Gamma_s)$ to the following system
\begin{equation}
\begin{array} {rcl}
-\divg \sigma(u^\pm, p^\pm) = F^\pm \quad \text{ and } \quad 
\divg u^\pm = \divg H^\pm & & \text{in } \Omega_s^\pm \times (0,\infty), \\
u^+ = 0 & &\text{on } \p \Omega \times (0,\infty), \\
u^+ = u^- = \displaystyle \frac{\p Z}{\p t} - \lambda Z 
\quad \text{ and } \quad
-\left[\sigma(u,p) \right]n = 
\mu \divg_{\Gamma_s} \nabla_{\Gamma_s} Z + \mathcal{K}_{\lambda}\nabla_{\Gamma_s}Z + G & &
\text{on } \Gamma_s \times (0,\infty),  \\
Z(\cdot,0 ) = Z_0 & & \text{on } \Gamma_s,
\end{array}
\label{mainsyslin-NH}
\end{equation}
and it satisfies
\begin{equation}
\begin{array} {l}
\|u^+\|_{\mathcal{U}_{\infty}(\Omega_s^+)} +
\|p^+\|_{\mathcal{Q}_{\infty}(\Omega_s^+)} +
\|u^-\|_{\mathcal{U}_{\infty}(\Omega_s^-)} +
\|p^-\|_{\mathcal{Q}_{\infty}(\Omega_s^-)} +
\|Z\|_{\mathcal{Z}_{\infty}(\Gamma_s)} \\
\leq
C_s(1+\lambda) \left( 
\|Z_0\|_{\mathcal{Z}_0(\Gamma_s)} +
\|F^+\|_{\mathcal{F}_{\infty}(\Omega_s^+)} +
\|F^-\|_{\mathcal{F}_{\infty}(\Omega_s^-)} +
\|H^+\|_{\mathcal{U}_{\infty}(\Omega_s^+)} +
\|H^-\|_{\mathcal{U}_{\infty}(\Omega_s^-)} +
\|G\|_{\mathcal{G}_{\infty}(\Gamma_s)}
\right),
\end{array}
\label{estimate-fixedpoint}
\end{equation}
where the constant $C_s >0$ depends only on $\Gamma_s$.
\end{corollary}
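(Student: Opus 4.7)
The plan is to apply the lifting method of section~\ref{sec-lifting} to reduce the inhomogeneous data $(F^\pm,H^\pm)$ to a pure boundary-forcing problem on $\Gamma_s$, and then to view the remaining equation as a non-homogeneous abstract Cauchy problem driven by the exponentially stable closed-loop operator delivered by Theorem~\ref{th-feedback}. Concretely, I would write $(u^\pm,p^\pm)=(v^\pm+w^\pm,q^\pm+\pi^\pm)$, where for almost every $t\in(0,\infty)$ the pair $(w^\pm,\pi^\pm)$ solves the uncoupled Stokes problems~\eqref{syslin-b}. Exactly as in section~\ref{sec-lifting}, after subtracting $H^\pm$ and applying the standard Stokes elliptic regularity in $\Omega_s^\pm$, one obtains
\begin{equation*}
\|w^\pm\|_{\mathcal{U}_\infty(\Omega_s^\pm)} + \|\pi^\pm\|_{\mathcal{Q}_\infty(\Omega_s^\pm)} + \|[\sigma(w,\pi)]n_s\|_{\mathcal{G}_\infty(\Gamma_s)} \leq C\big(\|F^\pm\|_{\mathcal{F}_\infty(\Omega_s^\pm)} + \|H^\pm\|_{\mathcal{U}_\infty(\Omega_s^\pm)}\big),
\end{equation*}
the trace bound coming from the $\HH^2$-regularity of $w^\pm$ and the $\H^1$-regularity of $\pi^\pm$, as in estimate~\eqref{est-wpi}.

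Next, $(v^\pm,q^\pm,Z)$ satisfies a system of type~\eqref{syslin-c} on $(0,\infty)$ in which the surface source is replaced by $\tilde G:=G+[\sigma(w,\pi)]n_s\in\mathcal{G}_\infty(\Gamma_s)$ and the feedback term $\mathcal{K}_\lambda Z$ appears on the right-hand side of the jump condition. Using the Poincar\'e--Steklov operator $\mathcal{P}_{\Gamma_s}$ defined in~\eqref{def-Steklov}, this equation is equivalent to the abstract Cauchy problem
\begin{equation*}
\frac{\p Z}{\p t} = \mathcal{A}^{cl}_\lambda Z + \mathcal{P}_{\Gamma_s}\tilde G \quad \text{in } (0,\infty), \qquad Z(0)=Z_0, \qquad \mathcal{A}^{cl}_\lambda := \mathcal{A} - \lambda\Id + \mathcal{P}_{\Gamma_s}\mathcal{K}_\lambda.
\end{equation*}
The operator $\mathcal{P}_{\Gamma_s}\mathcal{K}_\lambda$ is a finite-rank perturbation of the analytic generator $\mathcal{A}-\lambda\Id$ (Proposition~\ref{prop-analytic}), so $\mathcal{A}^{cl}_\lambda$ still generates an analytic semigroup; and by construction of $\mathcal{K}_\lambda$ in Theorem~\ref{th-feedback}, this semigroup is exponentially stable.

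I would then invoke the maximal $\L^2$-regularity for analytic, exponentially stable semigroups on Hilbert spaces (de~Simon) applied to the pair $(Z_0,\mathcal{P}_{\Gamma_s}\tilde G)\in\mathcal{Z}_0(\Gamma_s)\times\mathcal{G}_\infty(\Gamma_s)$, which yields a unique $Z\in\mathcal{Z}_\infty(\Gamma_s)$ obeying
\begin{equation*}
\|Z\|_{\mathcal{Z}_\infty(\Gamma_s)} \leq C\big(\|Z_0\|_{\mathcal{Z}_0(\Gamma_s)} + \|\tilde G\|_{\mathcal{G}_\infty(\Gamma_s)}\big).
\end{equation*}
Substituting the Stokes bound on $\tilde G$ gives exactly~\eqref{estimate-fixedpoint}; the velocity/pressure pieces $(v^\pm,q^\pm)$ are then recovered from Proposition~\ref{prop-op-reg} applied to the boundary datum $\partial_t Z-\lambda Z$. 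Uniqueness follows from linearity: subtracting two solutions gives a homogeneous system to which the energy identity of Lemma~\ref{lemma-Energy} (with the additional $\lambda$-term absorbed into the Gronwall step, and the feedback contribution bounded via the finite dimensionality of $\Xi$) can be applied to conclude $Z\equiv 0$ modulo $SE(2)$.

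The main obstacle is the infinite-horizon maximal regularity step: on finite horizons the estimate of section~\ref{sec-lifting} is routine, but extending it to $T=\infty$ requires that the closed-loop semigroup be not only analytic but also exponentially stable in the precise norm pair $(\HH^{3/2}(\Gamma_s),\HH^{5/2}(\Gamma_s))$ underlying $\mathcal{Z}_\infty(\Gamma_s)$. Analyticity is inherited from $\mathcal{A}$ under the finite-rank, relatively compact perturbation $\mathcal{P}_{\Gamma_s}\mathcal{K}_\lambda$, and exponential stability with rate $>0$ is precisely the content of Theorem~\ref{th-feedback}; once these two facts are reconciled with the interpolation characterization of $\mathcal{Z}_0(\Gamma_s)$ as the trace space of $\mathcal{Z}_\infty(\Gamma_s)$, the corollary follows.
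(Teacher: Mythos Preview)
Your proposal is correct and follows essentially the same route as the paper: lift the bulk data $(F^\pm,H^\pm)$ via the uncoupled Stokes problems~\eqref{syslin-b}, reduce to a homogeneous Stokes transmission problem for $(v^\pm,q^\pm,Z)$ with boundary source $\tilde G=G+[\sigma(w,\pi)]n_s$, rewrite this as an abstract Cauchy problem driven by the closed-loop generator, and invoke infinite-horizon maximal regularity for an analytic semigroup of negative type (the paper cites \cite[Theorem~3.1, p.~143, Part~II]{Bensoussan}, which is the same content as the de~Simon result you name). The only cosmetic discrepancy is that the paper writes the feedback term as $\mathcal{K}_\lambda Z$ rather than your $\mathcal{P}_{\Gamma_s}\mathcal{K}_\lambda Z$ in the abstract equation, and it does not spell out a separate uniqueness argument since uniqueness is immediate from the abstract Cauchy theory.
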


\begin{proof}
The lifting method of section~\ref{sec-lifting} can be used here: Introduce $u^\pm = v^\pm + w^\pm$ and $p^\pm = q^\pm + \pi^\pm$, where $(w^\pm,\pi^\pm)$ satisfy the Stokes problems~\eqref{syslin-b} with $(0,T)$ replaced by $(0,\infty)$, and where $(v^\pm, q^\pm, Z)$ satisfies
\begin{equation}
\begin{array} {rcl}
-\divg \sigma(v^\pm, q^\pm) = 0 \quad \text{ and } \quad 
\divg v^\pm = 0 & & \text{in } \Omega_s^\pm \times (0,\infty), \\
v^+ = 0 & &\text{on } \p \Omega \times (0,\infty), \\
v^\pm = \displaystyle \frac{\p Z}{\p t} - \lambda Z
\ \text{ and } \ 
-\left[\sigma(v,q) \right]n_s = \mu \divg_{\Gamma_s} \nabla_{\Gamma_s} Z 
+ \mathcal{K}_{\lambda}\nabla_{\Gamma_s}Z + G 
 + \left[\sigma(w,\pi) \right]n_s & &
\text{on } \Gamma_s \times (0,\infty), \\
Z(\cdot,0 ) = Z_0 & & \text{on } \Gamma_s.
\end{array}\label{syslin-d}
\end{equation}
We formulate system~\eqref{syslin-d} as
\begin{equation*}
\begin{array} {rcl}
\displaystyle
\frac{\p \nabla_{\Gamma_s}Z}{\p t} -\lambda \nabla_{\Gamma_s}Z 
- \nabla_{\Gamma_s}\mathcal{P}_{\Gamma_s}(\mu \divg_{\Gamma_s} \nabla_{\Gamma_s} Z) - \mathcal{K}_{\lambda} \nabla_{\Gamma_s}Z 
& = &
\nabla_{\Gamma_s}\mathcal{P}_{\Gamma_s}\left(G+ \left[ \sigma(w,\pi)\right]n_s\right),\\[5pt]
\displaystyle
\frac{\p \nabla_{\Gamma_s}Z}{\p t} -(\lambda\Id + \mathcal{A} + \mathcal{K}_{\lambda}) \nabla_{\Gamma_s}Z 
& = &
\nabla_{\Gamma_s}\mathcal{P}_{\Gamma_s}\left(G+ \left[ \sigma(w,\pi)\right]n_s\right),
\end{array}
\end{equation*}
and since the operator $\lambda \Id + \mathcal{A} + \mathcal{K}_{\lambda}$ is the infinitesimal generator of an analytic semigroup of negative type, a consequence of~\cite[Theorem~3.1 page~143, Part~II]{Bensoussan} and Proposition~\ref{prop-rigidity} is the existence of $Z\in \mathcal{Z}_{\infty}(\Gamma_s)$, satisfying
\begin{equation*}
\| Z \|_{\mathcal{Z}_{\infty}(\Gamma_s)}
\leq C\left(
\|Z_0\|_{\mathcal{Z}_0(\Gamma_s)} + 
\|G\|_{\mathcal{G}_{\infty}(\Gamma_s)} +
\|\left[\sigma(w,\pi)\right]n_s\|_{\mathcal{G}_{\infty}(\Gamma_s)}
\right).
\end{equation*}
Next, the steps of section~\ref{sec-lifting} can be repeated to obtain the existence and uniqueness of $Z$, which satisfies
\begin{equation}
\|Z\|_{\mathcal{Z}_{\infty}(\Gamma_s)} 
\leq
C \left( 
\|Z_0\|_{\mathcal{Z}_0(\Gamma_s)} +
\|F^+\|_{\mathcal{F}_{\infty}(\Omega_s^+)} +
\|F^-\|_{\mathcal{F}_{\infty}(\Omega_s^-)} +
\|H^+\|_{\mathcal{U}_{\infty}(\Omega_s^+)} +
\|H^-\|_{\mathcal{U}_{\infty}(\Omega_s^-)} +
\|G\|_{\mathcal{G}_{\infty}(\Gamma_s)}
\right).
\label{houpala}
\end{equation}
Further, $(u^\pm,p^\pm)$ are also obtained uniquely as the solutions of the classical Stokes problems with Dirichlet boundary conditions and non-homogeneous divergence condition, namely
\begin{equation*}
\begin{array} {rcl}
-\divg \sigma(u^\pm, p^\pm) = F^\pm \quad \text{ and } \quad 
\divg u^\pm = \divg H^\pm & & \text{in } \Omega_s^\pm \times (0,\infty), \\
u^+ = 0 & &\text{on } \p \Omega \times (0,\infty), \\
u^\pm = \displaystyle \frac{\p Z}{\p t} - \lambda Z & &
\text{on } \Gamma_s \times (0,\infty).
\end{array}
\end{equation*}
Up to considering $u^\pm - H^\pm$, from~\cite[Lemma~6.1, Chapter~IV]{Galdi} they satisfy the estimate
\begin{equation*}
\begin{array}{rcl}
\|u^\pm\|_{\mathcal{U}_{\infty}(\Omega_s^\pm)} +
\|p^\pm\|_{\mathcal{Q}_{\infty}(\Omega_s^\pm)} & \leq & C \left(
\left\|\displaystyle \frac{\p Z}{\p t} - \lambda Z\right\|_{\L^2(0,\infty;\HH^{3/2}(\Gamma_s))}
 + \| H^\pm \|_{\mathcal{U}_{\infty}(\Omega_s^\pm)}
\right)\\
& \leq & \left( (1+\lambda)\|Z\|_{\mathcal{Z}_{\infty}(\Gamma_s)} +  \| H^\pm \|_{\mathcal{U}_{\infty}(\Omega_s^\pm)}  \right)
\end{array}
\end{equation*}
which, combined with~\eqref{houpala}, leads to~\eqref{estimate-fixedpoint}, and thus the announced result.
\end{proof}

\section{Feedback stabilization of the nonlinear system} \label{sec-nonlinear}
In this section we prove Theorem~\ref{th-main}. We first prove wellposedness of system~\eqref{sys-cyl2}-\eqref{rhs-nonlinear} when $\hat{g}$ is replaced by $\mu\divg_{\Gamma_s}\big( (\tau_s \otimes \tau_s) \nabla_{\Gamma_s}\hat{Z}\big) + \mathcal{K}_{\lambda} \nabla_{\Gamma_s}\hat{Z}$ in~\eqref{sys-cyl2}:
\begin{equation}
\begin{array} {rl}
 - \divg (\sigma(\hat{u}^\pm, \hat{p}^\pm))
 =  \hat{f}^\pm + F(\hat{u}^\pm,\hat{p}^\pm, \hat{Z})
\quad \text{ and } \quad
\divg \hat{u}  =  \divg H(\hat{u}^\pm, \hat{Z})  & 
\text{in } \Omega_s^\pm \times (0,\infty), \\
\hat{u}^+ = 0 & \text{on } \p \Omega \times (0,\infty), \\
\hat{u}^\pm = \displaystyle \frac{\p \hat{Z}}{\p t} - \lambda \hat{Z}, 
\ \text{ and } \ 
-\left[\sigma(\hat{u},\hat{p})\right] n_s = \mu\divg_{\Gamma_s} \nabla_{\Gamma_s}\hat{Z} + 
\mathcal{K}_{\lambda}\nabla_{\Gamma_s} \hat{Z} +
G(\hat{u}^+, \hat{p}^+, \hat{u}^-, \hat{p}^-, \hat{Z})
& \text{on } \Gamma_s \times (0,\infty), \\
\hat{Z}(\cdot,0) = X_0-\Id & \text{on } \Gamma_s.
\end{array} \label{sys-cyl3}
\end{equation}
Denote
\begin{equation*}
\mathcal{H}(\Omega_s^+,\Omega_s^-,\Gamma_s) :=
\mathcal{U}_{\infty}(\Omega_s^+) \times \mathcal{Q}_{\infty}(\Omega_s^+) \times \mathcal{U}_{\infty}(\Omega_s^-) \times \mathcal{Q}_{\infty}(\Omega_s^-) \times \mathcal{Z}_{\infty}(\Gamma_s),
\end{equation*}
that we equip with the norm that goes without saying. A solution for system~\eqref{sys-cyl3} is obtained as a fixed point of the mapping
\begin{equation*}
\begin{array} {rccc}
\mathcal{N}: & \mathcal{H}(\Omega_s^+,\Omega_s^-,\Gamma_s) & \rightarrow &
\mathcal{H}(\Omega_s^+,\Omega_s^-,\Gamma_s) \\
 & (\hat{u}_1^+,\hat{p}_1^+,\hat{u}_1^-,\hat{p}_1^-,\hat{Z}_1) & \mapsto & (\hat{u}_2^+,\hat{p}_2^+,\hat{u}_2^-,\hat{p}_2^-,\hat{Z}_2), 
\end{array}
\end{equation*}
where $(\hat{u}_2^+,\hat{p}_2^+,\hat{u}_2^-,\hat{p}_2^-,\hat{Z}_2)$ is the solution of~\eqref{mainsyslin-NH} with $F^\pm$,  $H^\pm$ and $G$ replaced by $F(\hat{u}_1^\pm,\hat{p}_1^\pm, \hat{Z}_1)$, $ H(\hat{u}_1^\pm, \hat{Z}_1)$ and $G(\hat{u}_1^+, \hat{p}_1^+, \hat{u}_1^-, \hat{p}_1^-, \hat{Z}_1)$, respectively: 
\begin{equation*}
\begin{array} {rl}
 - \divg (\sigma(\hat{u}_2^\pm, \hat{p}_2^\pm))
 =  \hat{f}^\pm + F(\hat{u}_1^\pm,\hat{p}_1^\pm, \hat{Z}_1)
\quad \text{ and } \quad
\divg \hat{u}_2  =  H(\hat{u}_1^\pm, \hat{Z}_1) &  
\text{in } \Omega_s^\pm \times (0,\infty), \\
\hat{u}_2^+ = 0 & \text{on } \p \Omega \times (0,\infty), \\
\hat{u}_2^\pm = \displaystyle \frac{\p \hat{Z}_2}{\p t} - \lambda \hat{Z}_2, 
\ \text{ and } \ 
-\left[\sigma(\hat{u}_2,\hat{p}_2)\right] n_s = \mu\divg_{\Gamma_s} \nabla_{\Gamma_s}\hat{Z}_2 + 
\mathcal{K}_{\lambda}\nabla_{\Gamma_s} \hat{Z}_2 +
G(\hat{u}_1^+, \hat{p}_1^+, \hat{u}_1^-, \hat{p}_1^-, \hat{Z}_1)
& \text{on } \Gamma_s \times (0,\infty), \\
\hat{Z}_2(\cdot,0) = X_0-\Id & \text{on } \Gamma_s.
\end{array} \label{sys-cyl4}
\end{equation*}
Estimate~\eqref{estimate-fixedpoint} of Corollary~\ref{coro-final} yields
\begin{equation}
\begin{array} {rcl}
\| (\hat{u}_2^+,\hat{p}_2^+,\hat{u}_2^-,\hat{p}_2^-,\hat{Z}_2)\|_{\mathcal{H}(\Omega_s^+,\Omega_s^-,\Gamma_s)} 
& \leq & C_s(1+\lambda)
 \Big( 
\|X_0-\Id\|_{\mathcal{Z}_0(\Gamma_s)} + \|\hat{f}^+\|_{\mathcal{F}_{\infty}(\Omega_s^+)} +
\|\hat{f}^-\|_{\mathcal{F}_{\infty}(\Omega_s^-)}\\
& &  +\|F(\hat{u}_1^+,\hat{p}_1^+, \hat{Z}_1)\|_{\mathcal{F}_{\infty}(\Omega_s^+)} +
\|F(\hat{u}_1^-,\hat{p}_1^-, \hat{Z}_1)\|_{\mathcal{F}_{\infty}(\Omega_s^-)} \\
& & 
+\|H(\hat{u}_1^+, \hat{Z}_1)\|_{\mathcal{U}_{\infty}(\Omega_s^+)} +
\|H(\hat{u}_1^-, \hat{Z}_1)\|_{\mathcal{U}_{\infty}(\Omega_s^-)} \\
& & +\|G(\hat{u}_1^+, \hat{p}_1^+, \hat{u}_1^-, \hat{p}_1^-, \hat{Z}_1)\|_{\mathcal{G}_{\infty}(\Gamma_s)}
\Big).
\end{array}
\label{estimate-fixedpoint12}
\end{equation}
Consider the following closed subset of~$\mathcal{Z}_{\infty}(\Gamma_s)$
\begin{equation*}
\begin{array} {rcl}
\mathcal{B}_\rho & := & \left\{
(\hat{u}^+,\hat{p}^+,\hat{u}^-,\hat{p}^-,\hat{Z}) \in \mathcal{H}_{\infty}(\Omega_s^+,\Omega_s^-,\Gamma_s) \mid \|(\hat{u}^+,\hat{p}^+,\hat{u}^-,\hat{p}^-,\hat{Z}\|_{\mathcal{H}(\Omega_s^+,\Omega_s^-,\Gamma_s)} \leq 
2C_s(1+\lambda)\rho
\right\},
\end{array}
\end{equation*}
where
\begin{equation*}
\rho := 
\|X_0 - \Id\|_{\mathcal{Z}_0(\Gamma_s)}
+ \| \hat{f}^+ \|_{\mathcal{F}_{\infty}(\Omega_s^+)}
+ \| \hat{f}^- \|_{\mathcal{F}_{\infty}(\Omega_s^-)},
\end{equation*}
and $C_s$ is the constant of estimate~\eqref{estimate-fixedpoint}. Let us prove that $\mathcal{N}$ is a contraction in $\mathcal{B}$, provided that $\|X_0-\Id\|_{\mathcal{Z}_0(\Gamma_s)}$ and $\|\hat{f}^\pm\|_{\mathcal{F}_{\infty}(\Omega_s^\pm)}$ are small enough. Since the different nonlinearities in the right-hand-side of~\eqref{estimate-fixedpoint12} are polynomial, from~\cite[Proposition~B.1, page~283]{Grubb} we can address them with estimates of type
\begin{equation*}
\| \nabla \tilde{Y}(\tilde{X}) \nabla \hat{u} \|_{\mathbb{H}^1(\Omega_s^\pm)} 
\leq C
\| \nabla \tilde{Y}(\tilde{X}) \|_{\mathbb{H}^{3/2}(\Omega_s^\pm)}
\|  \nabla \hat{u} \|_{\mathbb{H}^1(\Omega_s^\pm)}.
\end{equation*}
Combined with the Lipschitz estimates of Proposition~\ref{prop-extension} and Corollary~\ref{coro-extension}, we deduce
\begin{equation*}
\begin{array} {rcl}
\|F(\hat{u}^\pm,\hat{p}^\pm, \hat{Z})\|_{\mathbf{L}^2(\Omega_s^\pm)} 
& \leq &  
C\| \nabla \tilde{Y}(\tilde{X}) \|_{\mathbb{H}^{3/2}(\Omega_s^\pm)}
\left(\|\nabla  \hat{u} \|_{\mathbb{H}^1(\Omega_s^\pm)} +
\|  \hat{p} \|_{\H^1(\Omega_s^\pm)}\right)
\| \nabla \tilde{X}-\I \|_{\mathbb{H}^{3/2}(\Omega_s^\pm)}, \\
\|F(\hat{u}^\pm,\hat{p}^\pm, \hat{Z})\|_{\mathcal{F}_{\infty}(\Omega_s^\pm)} 
& \leq &  
C\left(1+\| X-\Id \|_{\mathcal{Z}_{\infty}(\Gamma_s)}\right)
\left(\|  \hat{u} \|_{\mathcal{U}_{\infty}(\Omega_s^\pm)}+
\|  \hat{p} \|_{\mathcal{Q}_{\infty}(\Omega_s^\pm)}
\right)
\| X-\Id \|_{\mathcal{Z}_{\infty}(\Gamma_s)} \\
& \leq & 
C\left(1+\| e^{-\lambda t} \hat{Z} \|_{\mathcal{Z}_{\infty}(\Gamma_s)}\right)
\left(\|  \hat{u} \|_{\mathcal{U}_{\infty}(\Omega_s^\pm)}+
\|  \hat{p} \|_{\mathcal{Q}_{\infty}(\Omega_s^\pm)}
\right)
\| e^{-\lambda t}\hat{Z} \|_{\mathcal{Z}_{\infty}(\Gamma_s)}
\\
\|H(\hat{u}^\pm, \hat{Z})\|_{\mathcal{U}_{\infty}(\Omega_s^\pm)} & \leq & 
C\|  \hat{u} \|_{\mathcal{U}_{\infty}(\Omega_s^\pm)}
\| X-\Id \|_{\mathcal{Z}_{\infty}(\Gamma_s)}
\leq C\|  \hat{u} \|_{\mathcal{U}_{\infty}(\Omega_s^\pm)}
\| e^{-\lambda t}\hat{Z} \|_{\mathcal{Z}_{\infty}(\Gamma_s)}
,
\\
\|G(\hat{u}^+, \hat{p}^+, \hat{u}^-, \hat{p}^-, \hat{Z})\|_{\mathcal{G}_{\infty}(\Gamma_s)} & \leq & 
C\left(1+\| e^{-\lambda t}\hat{Z} \|_{\mathcal{Z}_{\infty}(\Gamma_s)}\right)
\left(\|  \hat{u} \|_{\mathcal{U}_{\infty}(\Omega_s^\pm)}+
\|  \hat{p} \|_{\mathcal{Q}_{\infty}(\Omega_s^\pm)}
\right)
\| e^{-\lambda t}\hat{Z} \|_{\mathcal{Z}_{\infty}(\Gamma_s)}\\ 
& & 
+ \mathcal{O}(\|\nabla \tilde{X}-I\|^2_{\mathbb{H}^1(\Gamma_s)}) \\
& \leq & C\left(1+\| e^{-\lambda t}\hat{Z} \|_{\mathcal{Z}_{\infty}(\Gamma_s)}\right)
\left(\|  \hat{u} \|_{\mathcal{U}_{\infty}(\Omega_s^\pm)}+
\|  \hat{p} \|_{\mathcal{Q}_{\infty}(\Omega_s^\pm)}
\right)
\| e^{-\lambda t}\hat{Z} \|_{\mathcal{Z}_{\infty}(\Gamma_s)}\\ 
&  & 
+C\| e^{-\lambda t}\hat{Z} \|^2_{\mathcal{Z}_{\infty}(\Gamma_s)} .
\end{array}
\end{equation*}
We see easily that $\| e^{-\lambda t}\hat{Z} \|_{\mathcal{Z}_{\infty}(\Gamma_s)} \leq C(1+\lambda)\| \hat{Z} \|_{\mathcal{Z}_{\infty}(\Gamma_s)}$. Consequently, if $(\hat{u}_1^+,\hat{p}_1^+,\hat{u}_1^-,\hat{p}_1^-, \hat{Z}_1) \in \mathcal{B}_\rho$, from~\eqref{estimate-fixedpoint12} we obtain 
\begin{equation*}
\begin{array} {rcl}
\| (\hat{u}_2^+,\hat{p}_2^+,\hat{u}_2^-,\hat{p}_2^-,\hat{Z}_2)\|_{\mathcal{H}(\Omega_s^+,\Omega_s^-,\Gamma_s)} 
& \leq & 
 C_s(1+\lambda)\left(\rho + C\rho^2(1+\lambda)^3(1+(1+\lambda) + \rho(1+\lambda)^2)
\right).
\end{array}
\label{estimate-fixedpoint13}
\end{equation*}
Therefore $\mathcal{N}$ is well-defined, and if $\rho$ is small enough, that is 
\begin{equation*}
C\rho^2(1+\lambda)^3(1+(1+\lambda) + \rho(1+\lambda)^3) \leq \rho,
\end{equation*}
the ball $\mathcal{B}$ is left invariant under $\mathcal{N}$. Next, let be
\begin{equation*}
(\hat{u}_{i}^+,\hat{p}_{i}^+,\hat{u}_{i}^-,\hat{p}_{i}^-,\hat{Z}_{i}) \in \mathcal{B}_\rho 
\end{equation*}
for $i\in \{1,2\}$. The difference
\begin{equation*}
(\overline{u}^+,\overline{p}^+,\overline{u}^-,\overline{p}^-,\overline{Z}) :=
\mathcal{N}(\hat{u}_{1}^+,\hat{p}_{1}^+,\hat{u}_{1}^-,\hat{p}_{1}^-,\hat{Z}_{1})-
\mathcal{N}(\hat{u}_{2}^+,\hat{p}_{2}^+,\hat{u}_{2}^-,\hat{p}_{2}^-,\hat{Z}_{2})
\end{equation*}
satisfies
\begin{equation}
\begin{array} {rcl}
 - \divg (\sigma(\overline{u}^\pm, \overline{p}^\pm))
 =   \overline{F}^\pm
\quad \text{ and } \quad
\divg \overline{u}  =  \divg \overline{H}^\pm &  & 
\text{in } \Omega_s^\pm \times (0,\infty), \\
\overline{u}^+ = 0 & & \text{on } \p \Omega \times (0,\infty), \\
\overline{u} = \displaystyle \frac{\p \overline{Z}}{\p t} - \lambda \overline{Z}, 
\quad \text{ and } \quad
-\left[\sigma(\overline{u},\overline{p})\right] n_s = \mu\divg_{\Gamma_s} \nabla_{\Gamma_s}\overline{Z} + 
\mathcal{K}_{\lambda}\nabla_{\Gamma_s} \overline{Z} +
\overline{G}
& & \text{on } \Gamma_s \times (0,\infty), \\
\overline{Z}(\cdot,0) = 0 & & \text{on } \Gamma_s.
\end{array} \label{sys-cyl7}
\end{equation}
where we have introduced
\begin{equation*}
\begin{array}{rcl}
\overline{F}^\pm & := & F(\hat{u}_{1}^\pm,\hat{p}_{1}^\pm, \hat{Z}_1)
- F(\hat{u}_{2}^\pm,\hat{p}_{2}^\pm, \hat{Z}_2), \\
\overline{H}^\pm & := & H(\hat{u}^\pm_{1}, \hat{Z}_1)-H(\hat{u}^\pm_{2}, \hat{Z}_2), \\
\overline{G} & := & 
G(\hat{u}^+_1, \hat{p}^+_1, \hat{u}^-_1, \hat{p}^-_1, \hat{Z}_1)
- G(\hat{u}^+_2, \hat{p}^+_2, \hat{u}^-_2, \hat{p}^-_2, \hat{Z}_2).
\end{array}
\end{equation*}
Using the Lipschitz estimates of Proposition~\ref{prop-extension} and Corollary~\ref{coro-extension}, they satisfy
\begin{equation*}
\begin{array} {rcl}
\|\overline{F}^\pm\|_{\mathcal{F}_{\infty}(\Omega_s^\pm)} 
+ \|\overline{G}\|_{\mathcal{G}_{\infty}(\Gamma_s^\pm)}
& \leq & 
C\left( \|e^{-\lambda t}( \hat{Z}_1-\hat{Z}_2)\|_{\mathcal{Z}_{\infty}(\Gamma_s)}
+ \|\hat{u}_1 - \hat{u}_2\|_{\mathcal{U}_{\infty}(\Omega_s^\pm)}
+ \|\hat{p}_1 - \hat{p}_2\|_{\mathcal{Q}_{\infty}(\Omega_s^\pm)}
\right) \\
& & \times \left( \displaystyle\sum_{i=1}^2
\|\hat{u}_i \|_{\mathcal{U}_{\infty}(\Omega_s^\pm)} 
+ \|\hat{p}_i \|_{\mathcal{Q}_{\infty}(\Omega_s^\pm)}
+\|e^{-\lambda t}\hat{Z}_i\|_{\mathcal{Z}_{\infty}(\Gamma_s)}
\right) \\
& & \times\left(1+ \|e^{-\lambda t} \hat{Z}_1\|_{\mathcal{Z}_{\infty}(\Gamma_s)} 
+ \|e^{-\lambda t} \hat{Z}_2\|_{\mathcal{Z}_{\infty}(\Gamma_s)}\right),\\
\|\overline{H}^\pm\|_{\mathcal{U}_{\infty}(\Omega_s^\pm)} & \leq & 
C\left( \|e^{-\lambda t}( \hat{Z}_1-\hat{Z}_2)\|_{\mathcal{Z}_{\infty}(\Gamma_s)}
+ \|\hat{u}_1 - \hat{u}_2\|_{\mathcal{U}_{\infty}(\Omega_s^\pm)}
\right)\\
& & \times \left( 
 \|\hat{u}_2\|_{\mathcal{U}_{\infty}(\Omega^\pm_s)}
+\|e^{-\lambda t} \hat{Z}_1\|_{\mathcal{Z}_{\infty}(\Gamma_s)}
\right).
\end{array}
\end{equation*}
Combined with estimate~\eqref{estimate-fixedpoint} of Corollary~\ref{coro-final}, we then obtain
\begin{equation*}
\begin{array} {rcl}
\| (\overline{u}^+,\overline{u}^+,\overline{u}^-,\overline{p}^-,\overline{Z})\|_{\mathcal{H}(\Omega_s^+,\Omega_s^-,\Gamma_s)} 
& \leq & C_s(1+\lambda)
 \Big( 
\|\overline{F}^+\|_{\mathcal{F}_{\infty}(\Omega_s^+)} +
\|\overline{F}^-\|_{\mathcal{F}_{\infty}(\Omega_s^-)} \\
& & 
+\|\overline{H}^+\|_{\mathcal{U}_{\infty}(\Omega_s^+)} +
\|\overline{H}^-\|_{\mathcal{U}_{\infty}(\Omega_s^-)}  +\|\overline{G}\|_{\mathcal{G}_{\infty}(\Gamma_s)}
\Big) \\
& \leq & 
C(1+\lambda)^2\rho(1+(1+\lambda)\rho) \\
& & \times
\|(\hat{u}_{1}^+ -\hat{u}_2^+,\hat{p}_{1}^+-\hat{p}_2^+,\hat{u}_{1}^- -\hat{u}_2^-,\hat{p}_{1}^--\hat{p}_2^-,\hat{Z}_{1}-\hat{Z}_2)\|_{\mathcal{H}(\Omega_s^+,\Omega_s^-,\Gamma_s)}.
\end{array}
\label{estimate-fixedpoint19}
\end{equation*}
Choosing once again $\rho$ small enough, that is $C(1+\lambda)^2\rho(1+(1+\lambda)\rho) < 1$, we obtain that $\mathcal{N}$ is a contraction in $\mathcal{B}_\rho$. Therefore wellposedness for~\eqref{sys-cyl3} is a consequence of the Banach fixed-point theorem. Furthermore, $\|\hat{Z}\|_{\mathcal{Z}_{\infty}(\Gamma_s)}$ is bounded. Recall that in~\eqref{change-unknowns-bis} we introduced $\hat{Z} = e^{\lambda t} (X-\Id)$, where $\Id$ can be replaced by any deformation $X_c \in \Diff$. In~\eqref{sys-cyl3} we have chosen
\begin{equation*}
\begin{array} {rcl}
\hat{g} & = & \mu\divg_{\Gamma_s}\big( (\tau_s \otimes \tau_s) \nabla_{\Gamma_s}\hat{Z}\big) +\mathcal{K}_{\lambda} \nabla_{\Gamma_s}\hat{Z}\\ 
& = & 
\mu e^{\lambda t} \divg_{\Gamma_s}\big( (\tau_s \otimes \tau_s)\nabla_{\Gamma_s}(X-X_c)\big) +
e^{\lambda t} \mathcal{K}_{\lambda}\nabla_{\Gamma_s}(X-X_c).
\end{array}
\end{equation*}
Still following section~\ref{sec-extension2}, we note that system~\eqref{sys-cyl3} is equivalent to~\eqref{mainsys} by choosing
\begin{equation*}
\begin{array} {rcl}
g & = & \big(|\cof \nabla \tilde{X} n_s |^{-1} \tilde{g} \big) \circ X^{-1}
= \big(|\cof \nabla \tilde{X} n_s |^{-1} e^{-\lambda t}\hat{g} \big) \circ X^{-1}
\\[5pt]
& = & \left(|\cof \nabla \tilde{X} n_s |^{-1}\left(
\divg_{\Gamma_s} \big( (\tau_s \otimes \tau_s) \nabla_{\Gamma_s} (X-X_c) \big)
+ \mathcal{K}_{\lambda}\nabla_{\Gamma_s}(X-X_c)
\right)\right) \circ X^{-1}.
\end{array} 
\end{equation*}
Since $(\det \mathfrak{g}(t)^{1/2} = |\cof \nabla \tilde{X} n_s | (\det \mathfrak{g}_s)^{1/2}$, and $(\det \mathfrak{g}_s)^{1/2} = r$ is constant, we consider
\begin{equation*}
g = \left(r(\det \mathfrak{g})^{-1/2}\left(
\divg_{\Gamma_s} \big( (\tau_s \otimes \tau_s) \nabla_{\Gamma_s} (X-X_c) \big)
+\mathcal{K}_{\lambda}\nabla_{\Gamma_s}(X-X_c)\right)\right) 
\circ X^{-1}.
\end{equation*}
Thus the result announced in Theorem~\ref{th-main} follows.




\section{Comments on a possible extension to dimension 3} \label{sec-dim3}

Some results obtained in the present paper could certainly and straightforwardly be extended to the three-dimensional case, like the study of the Poincar\'e-Steklov operator for example, or the design of the feedback operator. Higher-order Sobolev spaces may be considered for guaranteeing the $C^1$~regularity and stability of Sobolev spaces by product. However, some geometric aspects would deserve a careful investigation. Let us make comments on the difficulties that appear in dimension~3:
\begin{itemize}
\item About the stationary state obtained in Lemma~\ref{lemma-Temam}: In dimension~2, the interface $\Gamma(t)$ is a curve, and its mean curvature is simply called the {\it curvature}. From the fundamental theorem of curves, this curvature determines entirely $\Gamma(t)$, up to proper rigid deformations. In the case of dimension~3, the interface $\Gamma(t)$ is then a surface, and this is the Gaussian curvature which characterizes the metric of the surface. We say that this is an {\it intrinsic} property of the surface $\Gamma(t)$ (cf. the Gauss's {\it Theorem Egregium}). More precisely, two surfaces with the same Gauss curvature differ only up to proper rigid deformations, we say that they are {\it congruent}. The mean curvature which appears in the surface-tension model is only extrinsic in dimension~3, which means that two surfaces with the same mean curvature could not be congruent. However, when restricting the framework to closed surfaces, the Alexandrov's theorem~\cite{Aleksandrov} (see~\cite{Aleksandrov-en} for an English translation) provides a positive result: Two closed surfaces with the same mean curvature are identical, {\it up to essential transformations}. Essential transformations refer to proper rigid deformations and dilatation. In the incompressible case, the volume contained inside the surface is constant, and thus this notion reduces to proper rigid deformations, like in dimension~2.

\item About the linearized system in dimension~3: Simplifications specific to dimension~2 have been made in section~\ref{sec-extension2} when linearizing the mean curvature of $\Gamma(t)$ for small displacements. The expression so obtained involves the operator~$\nabla^{n_s}_{\Gamma_s}$. A priori the linear operator which appears in dimension~3 is more complex, and discussions of section~\ref{sec-kernel} about the kernel of~$\nabla^{n_s}_{\Gamma_s}$ would no longer be relevant.

\item About the extension of diffeomorphisms on the sphere into the ball: This question is less simple in the case of a 2-sphere. In~\cite[system~(8.3), section~8]{Ye1994}, the author gave comments on conditions under which we could extend a diffeomorphism defined on a boundary of a given domain. A sufficient condition is that the set of diffeomorphisms of this boundary preserving the orientation is connected. In $\R^2$, this sufficient condition is always fulfilled and thus the answer is positive. In $\R^3$, things are more delicate, and counter-examples to this sufficient condition exist. However, in the case of the sphere, Smale provided a positive answer in~\cite{Smale}. The result requires a $C^\infty$~regularity, and we do not know whether it could be used for obtaining an extension with the same properties as in section~\ref{sec-extension1}. Further comments on these geometric questions would go beyond the scope of the present article.

\end{itemize}

\appendix

\section{Appendix} \label{sec-appendix}

\subsection{Proof of Proposition~\ref{prop-extension}} \label{appendix1}

\paragraph{Harmonic extension of $X$.}
Let us first discuss of how to extend $X$ from the circle $\Gamma_s$ into the unit ball $\Omega_s^-$. In dimension~2, one way of extending diffeomorphisms of the circle is to consider the Douady-Earle extension~\cite{Douady86}, which is harmonic, and therefore inherits of the elliptic regularity from its Dirichlet boundary condition. It relies on the Rad\'o-Kneser-Choquet theorem, and more specifically the Poisson integral formula. Many generalizations of such a result have been obtained afterwards, in particular requiring the strong Choquet condition, namely that $X(\Gamma_s)$ shall be convex. But more recently it has been extended in~\cite{Alessandrini2009, Alessandrini2017} to the case of homeomorphisms from the unit circle onto a simple closed curve of $\R^2$. We state~\cite[Theorem~1.3]{Alessandrini2009} in our context as follows:

\begin{lemma} \label{lemma-RKC}
Let $X:\Gamma_s \rightarrow X(\Gamma_s)$ be an orientation preserving difeomorphism of class~$C^1$ onto a simple closed curve $X(\Gamma_s)$. Let $D$ be the bounded domain such that $\p D = X(\Gamma_s)$. Denote by $X_{\mathrm{RKC}}$ the solution of the Dirichlet problem
\begin{equation}
\left\{ \begin{array} {rcl}
\Delta X_{\mathrm{RKC}} = 0 & & \text{in } \Omega_s^-, \\
X_{\mathrm{RKC}} = X & & \text{on } \Gamma_s.
\end{array} \right.
\label{Diri-pb}
\end{equation}
When $X_{\mathrm{RKC}} \in C^1(\overline{\Omega_s^-})$, it is a difeomorphism of $\overline{\Omega_s^-}$ onto $\overline{D}$ if and only if $\det \nabla X_{\mathrm{RKC}} >0$ everywhere on $\Gamma_s$.
\end{lemma}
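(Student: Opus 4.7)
Necessity is immediate: if $X_{\mathrm{RKC}}$ is a $C^1$ diffeomorphism of $\overline{\Omega_s^-}$ onto $\overline{D}$, then, since $X$ is orientation-preserving on $\Gamma_s$ and a diffeomorphism has nowhere-vanishing Jacobian of constant sign, $\det \nabla X_{\mathrm{RKC}} > 0$ throughout $\overline{\Omega_s^-}$, in particular on $\Gamma_s$. The content lies in the converse.

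For sufficiency, I would exploit the complex-analytic structure of planar harmonic maps. Identifying $\R^2 \simeq \mathbb{C}$, I would write $X_{\mathrm{RKC}} = h + \overline{k}$ with $h, k$ holomorphic in the unit disk $\Omega_s^-$, a representation which exists since $X_{\mathrm{RKC}}$ is complex-harmonic and $\Omega_s^-$ is simply connected. A direct computation with Wirtinger derivatives gives $J := \det \nabla X_{\mathrm{RKC}} = |h'|^2 - |k'|^2$. The hypothesis $J > 0$ on $\Gamma_s$ yields $|k'| < |h'|$ on $\Gamma_s$, which in particular forces $h' \neq 0$ on $\Gamma_s$. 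The crucial intermediate step is to show that $h'$ is nonzero throughout $\Omega_s^-$; once achieved, the quotient $\omega := k'/h'$ is holomorphic on $\Omega_s^-$ and continuous up to $\Gamma_s$, with $|\omega| < 1$ there, so the maximum modulus principle gives $|\omega| < 1$ throughout $\Omega_s^-$, hence $J > 0$ everywhere on $\overline{\Omega_s^-}$.

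Once $J > 0$ on $\overline{\Omega_s^-}$, the inverse function theorem makes $X_{\mathrm{RKC}}$ a local $C^1$ diffeomorphism. To promote this to a global statement, I would use degree theory: since $X_{|\Gamma_s}$ is an orientation-preserving $C^1$ diffeomorphism onto $\p D$, the Brouwer degree of $X_{\mathrm{RKC}}$ relative to any $p \in D$ equals $+1$, computed solely from the boundary values. Combined with the positivity of $J$---which forces every preimage of a regular value to count with sign $+1$---this implies each $p \in D$ has exactly one preimage in $\Omega_s^-$, yielding global injectivity. Together with the surjectivity onto $\overline{D}$ (a connectedness argument on the image), this yields the desired $C^1$ diffeomorphism.

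The main obstacle is the exclusion of interior zeros of $h'$. When $D$ is convex (the classical Rad\'o--Kneser--Choquet setting), one applies the maximum principle to affine combinations $\alpha \cdot X_{\mathrm{RKC}}$ and argues that their level sets cannot close up, which in turn rules out branch points. For $D$ merely a simple closed curve, this convexity-based trick fails, and one must rely on the more delicate complex-analytic argument of Alessandrini--Nesi: computing the winding number of $h'$ along $\Gamma_s$ from the boundary data of $X$ and the inequality $|k'/h'| < 1$ on $\Gamma_s$, and showing this winding number vanishes, so that by the argument principle $h'$ has no interior zeros. This is the technical heart of the proof.
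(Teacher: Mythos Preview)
The paper does not prove this lemma at all: it is stated explicitly as a restatement of \cite[Theorem~1.3]{Alessandrini2009} (``We state~\cite[Theorem~1.3]{Alessandrini2009} in our context as follows''), and is invoked as a black box. So there is no proof in the paper to compare against.

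Your sketch is a faithful outline of the Alessandrini--Nesi argument that the paper cites. The decomposition $X_{\mathrm{RKC}}=h+\overline{k}$, the Jacobian identity $J=|h'|^2-|k'|^2$, the reduction to showing $h'\neq 0$ in the interior, the maximum-modulus step for $\omega=k'/h'$, and the degree-theoretic passage from local to global bijectivity are all correct and in the right order. You also correctly identify the genuine difficulty: ruling out interior zeros of $h'$ without convexity of $D$, which is exactly the contribution of the cited reference via a winding-number/argument-principle computation along $\Gamma_s$. Since the paper treats the lemma as an imported result, your proposal goes well beyond what the paper itself supplies; nothing in it is wrong, though the hard step is (appropriately) only named rather than carried out.
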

The condition $X_{\mathrm{RKC}} \in C^1(\overline{\Omega_s^-})$ is satisfied when the elliptic regularity of~\eqref{Diri-pb} provides a solution in a Sobolev space that is embedded in $C^1(\overline{\Omega_s^-})$, in our case~$\HH^{5/2}(\Omega_s^\pm)$. The condition $\det \nabla X_{\mathrm{RKC}} >0$ can be guaranteed by assuming the data $X$ close enough to the identity. The estimates provided in the next step shows this.

\paragraph{Elliptic regularity, Lipschitz estimates and invertibility.}
The interest of extending $X$ via the Dirichlet problem~\eqref{Diri-pb} lies in the linearity and simplicity of the latter. Therefore we derive straightforwardly the following result, leading to Proposition~\ref{prop-extension}:
\begin{proposition} \label{prop-RKC}
If $X \in \mathcal{Z}_{\infty}(\Gamma_s)$, the solutions~$\tilde{X}^\pm$ of
\begin{equation}
\left\{ \begin{array} {rcl}
\Delta \tilde{X}^\pm = 0 & & \text{in } \Omega_s^\pm \times (0,\infty), \\
\tilde{X}^\pm = X & & \text{on } \Gamma_s \times (0,\infty), \\
\tilde{X}^+ = \Id & & \text{on } \p \Omega \times (0, \infty),
\end{array} \right.
\label{Diri-pb2}
\end{equation}
satisfy
\begin{equation}
\|\tilde{X}^\pm - \Id \|_{\mathcal{X}_{\infty}(\Omega_s^\pm)} \leq
C\|X- \Id \|_{\mathcal{Z}_{\infty}(\Gamma_s)}.
\label{est-Diri1}
\end{equation}
If $\|X-\Id \|_{\mathcal{Z}_{\infty}(\Gamma_s)}$ is small enough, the extension~$\tilde{X}^+$ is locally invertible, and $\tilde{X}^-$ is globally invertible. Furthermore, given two mappings $X_1-\Id, \, X_2-\Id \in \mathcal{Z}_{\infty}(\Gamma_s)$, the respective solutions $\tilde{X}^\pm_1, \, \tilde{X}^\pm_2$ of~\eqref{Diri-pb2} satisfy
\begin{equation}
\|\tilde{X}^\pm_1 - \tilde{X}^\pm_2 \|_{\mathcal{X}_{\infty}(\Omega_s^\pm)} \leq
C\|X_1 - X_2 \|_{\mathcal{Z}_{\infty}(\Gamma_s)}.
\label{est-Diri2}
\end{equation}
\end{proposition}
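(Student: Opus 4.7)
The plan is to reduce Proposition~\ref{prop-RKC} to three essentially independent ingredients: (i) elliptic regularity for the linear Dirichlet problem~\eqref{Diri-pb2}, (ii) smallness combined with Sobolev embeddings to upgrade closeness-to-identity into invertibility, and (iii) a direct appeal to linearity for the Lipschitz estimate.

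First I would subtract $\Id$ (which is harmonic) from $\tilde{X}^\pm$, turning~\eqref{Diri-pb2} into a homogeneous Laplace problem for $\tilde{X}^\pm - \Id$ with boundary datum $X - \Id$ on $\Gamma_s$ (and zero on $\partial\Omega$ for the~$+$ case). Since $\Gamma_s$ and $\partial\Omega$ are smooth and the trace space of $\HH^{5/2}(\Omega_s^\pm)$ is exactly $\HH^2$ of the boundary, the classical elliptic regularity theorem (see e.g.~\cite{Grisvard}) gives, for each $t\geq 0$,
\begin{equation*}
\|\tilde{X}^\pm(\cdot,t) - \Id\|_{\HH^{5/2}(\Omega_s^\pm)} \leq C\, \|X(\cdot,t) - \Id\|_{\HH^{2}(\Gamma_s)}.
\end{equation*}
Taking the $\L^\infty$-norm in time and invoking the continuous embedding $\mathcal{Z}_{\infty}(\Gamma_s) \hookrightarrow \mathcal{C}_b([0,\infty); \HH^2(\Gamma_s)/SE(2))$ recalled in section~\ref{sec-notation} then yields~\eqref{est-Diri1}.

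Next, I would derive invertibility from smallness via the two-dimensional Sobolev embeddings $\HH^{5/2}(\Omega_s^\pm) \hookrightarrow \mathcal{C}^1(\overline{\Omega_s^\pm})$ and $\HH^{3/2}(\Omega_s^\pm) \hookrightarrow \mathcal{C}^0(\overline{\Omega_s^\pm})$. Combined with~\eqref{est-Diri1}, they provide
\begin{equation*}
\|\nabla \tilde{X}^\pm - \I\|_{\L^\infty(0,\infty;\mathcal{C}^0(\overline{\Omega_s^\pm}))} \leq C\, \|X-\Id\|_{\mathcal{Z}_\infty(\Gamma_s)}.
\end{equation*}
Choosing $\|X-\Id\|_{\mathcal{Z}_\infty(\Gamma_s)}$ small enough makes $\nabla \tilde{X}^\pm(x,t)$ uniformly invertible at every point, so the inverse function theorem delivers the local invertibility of $\tilde{X}^+$ on $\overline{\Omega_s^+}$. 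For $\tilde{X}^-$, I would then invoke Lemma~\ref{lemma-RKC}: smallness of $\|X-\Id\|_{\mathcal{Z}_\infty(\Gamma_s)}$ together with $\HH^2(\Gamma_s) \hookrightarrow \mathcal{C}^1(\Gamma_s)$ ensures that $X(\cdot,t)$ is an orientation-preserving $\mathcal{C}^1$-diffeomorphism onto a simple closed curve $X(\Gamma_s)$, the $\HH^{5/2}$-regularity gives $\tilde{X}^-(\cdot,t)\in \mathcal{C}^1(\overline{\Omega_s^-})$, and $\det \nabla \tilde{X}^-(\cdot,t)>0$ on $\Gamma_s$ follows from the previous bound. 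The lemma then yields that $\tilde{X}^-(\cdot,t)$ is a global diffeomorphism of $\overline{\Omega_s^-}$ onto the interior of $X(\Gamma_s)$.

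Finally, for~\eqref{est-Diri2}, the difference $\tilde{X}^\pm_1 - \tilde{X}^\pm_2$ solves the same homogeneous Laplace problem with boundary datum $X_1-X_2$ on $\Gamma_s$ (and zero on $\partial\Omega$) by linearity, so repeating the first step verbatim yields the Lipschitz estimate. The only genuinely nonlinear point is the global diffeomorphism conclusion for $\tilde{X}^-$, but that is entirely absorbed into Lemma~\ref{lemma-RKC} once the smallness hypotheses are secured; everything else is linear elliptic regularity plus Sobolev embedding bookkeeping.
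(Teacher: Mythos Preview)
Your proposal is correct and follows essentially the same approach as the paper: elliptic regularity for the linear Dirichlet problem (after subtracting the harmonic map $\Id$) to obtain~\eqref{est-Diri1} and~\eqref{est-Diri2}, smallness plus Sobolev embedding to force $\det\nabla\tilde X>0$, the inverse function theorem for local invertibility of $\tilde X^+$, and Lemma~\ref{lemma-RKC} for global invertibility of $\tilde X^-$. The only cosmetic difference is that the paper bounds $|\det\nabla\tilde X-1|$ via the mean value theorem applied to $\det$, whereas you bound $\|\nabla\tilde X-\I\|_{\mathcal C^0}$ directly; both yield the same conclusion.
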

Without ambiguity we omit the notation~$\tilde{X}^\pm$ for keeping only~$\tilde{X}$.

\begin{proof}
Estimates~\eqref{est-Diri1} and~\eqref{est-Diri2} are straightforwardly deduced from the elliptic regularity of system~\eqref{Diri-pb2} in $\HH^{5/2}(\Omega_s^\pm)$. Next, recalling that the differential of $A \mapsto \det A$ is $H\mapsto \cof(A):H$, and that in dimension~2 the mapping $A\mapsto \cof(A)$ is linear, for all $t\geq 0$ we deduce from the mean value theorem
\begin{equation*}
\begin{array}{rcl}
|\det \nabla \tilde{X}(y,t) - 1 |_{\R} & \leq & \displaystyle 
\sup_{\alpha\in [0,1]} |\cof (\alpha y + (1-\alpha)\tilde{X}(y,t)) |_{\R^{2\times 2}}
| \tilde{X}(y,t) - y |_{\R^{2\times 2}}, \\
\| \det \nabla \tilde{X}(\cdot,t) -1 \|_{\mathcal{C}(\overline{\Omega_s^\pm})} & \leq &
C\left(1 + \|\tilde{X}(\cdot,t) \|_{\LL^{\infty}(\Omega_s^\pm)} \right)\| X-\Id \|_{\mathcal{Z}_{\infty}(\Gamma_s)}.
\end{array}
\end{equation*}
Therefore, assuming $\|X-\Id \|_{\mathcal{Z}_{\infty}(\Gamma_s)}$ small enough shows that $\det \nabla \tilde{X}(y,t) > C>0$ for all $(y,t) \in \Omega_s \times (0,\infty)$. Then the local invertibility of~$\tilde{X}_{|\Omega_s^+}$ is due to the inverse function theorem, and the global invertibility of~$\tilde{X}_{|\Omega_s^-}$ follows from Lemma~\ref{lemma-RKC}.
\end{proof}

\subsection{Proof of Proposition~\ref{propinfsup} and Corollary~\ref{coroinfsup}} \label{appendix2}
We start by recalling the two following lemmas that are needed for what follows. The first one can be deduced from~\cite[Exercise~3.4, Chapter~III]{Galdi}.
\begin{lemma} \label{lemma-app1}
For $p^{\pm} \in \L^2(\Omega_s^{\pm})$, there exists $v_p^{\pm} \in \HH^1_0(\Omega_s^{\pm})$ satisfying
\begin{equation}
-\divg v_p^{\pm} = p^{\pm} \quad \text{in } \Omega_s^{\pm}, \qquad
v_p^{\pm} = 0 \quad \text{on } \p \Omega_s^{\pm},
\label{sysdivGaldi}
\end{equation}
and
\begin{equation}
\| v_p^{\pm} \|_{\HH^1(\Omega_s^{\pm})}  \leq  C \|p^{\pm}\|_{\L^2(\Omega_s^{\pm})}.
\label{estdivGaldi}
\end{equation}
\end{lemma}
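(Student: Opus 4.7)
My plan begins by observing that this lemma is the classical solvability of the divergence equation in $\HH^1_0$, going back to Bogovski\u{\i}. The statement as written omits the natural compatibility condition: if $v_p^\pm \in \HH^1_0(\Omega_s^\pm)$ satisfies $-\divg v_p^\pm = p^\pm$, then the divergence theorem forces $\int_{\Omega_s^\pm} p^\pm\,\mathrm{d}x = 0$, so one must read the hypothesis as $p^\pm \in \L^2(\Omega_s^\pm)/\R$ (consistent with the pressure spaces $Q^\pm$ defined earlier in the paper) and read the right-hand side of \eqref{estdivGaldi} as $\|p^\pm\|_{\L^2/\R}$.

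First I would treat $\Omega_s^-$, which is a smooth simply connected domain, hence star-shaped with respect to any interior ball. Choose a cutoff $\omega \in \mathcal{C}^{\infty}_c(B)$ with $\int \omega = 1$, where $B \subset \Omega_s^-$ is a small ball, and define the Bogovski\u{\i} operator by the explicit integral formula
\begin{equation*}
v_p^-(x) = \int_{\Omega_s^-} K(x,y)\, p^-(y)\, \mathrm{d}y,
\end{equation*}
with the classical kernel $K(x,y)$ built from $\omega$ and the ray from $y$ away from a fixed interior point. A direct computation then shows $\divg v_p^- = -p^-$ and that $v_p^-$ vanishes on $\Gamma_s$. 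The gradient bound $\|\nabla v_p^-\|_{\LLL^2(\Omega_s^-)} \leq C \|p^-\|_{\L^2(\Omega_s^-)}$ is obtained by recognising $\nabla_x K$ as a Calder\'on–Zygmund kernel and invoking the standard $\L^2$ boundedness of the associated singular integral; Poincar\'e's inequality then upgrades this to the full $\HH^1$ norm and gives \eqref{estdivGaldi}.

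The case of $\Omega_s^+$ requires one additional step, because $\Omega_s^+$ is an annular region trapped between $\p\Omega$ and $\Gamma_s$ and is therefore not globally star-shaped. Two routes are available. Either one covers $\overline{\Omega_s^+}$ by finitely many star-shaped sub-pieces $D_1,\dots,D_N$, decomposes $p^+ = \sum_k \eta_k p^+$ via a partition of unity with each $\eta_k p^+$ adjusted by a mean-zero correction supported in $D_k \cap D_{k+1}$, applies the star-shaped version on each piece, and sums the local Bogovski\u{\i} solutions; this is the patching construction carried out in Galdi's book for arbitrary bounded Lipschitz domains. Alternatively, one can bypass the explicit formula entirely by solving the auxiliary homogeneous Stokes system
\begin{equation*}
-\Delta u + \nabla q = 0, \quad \divg u = -p^+ \text{ in } \Omega_s^+, \quad u = 0 \text{ on } \p \Omega_s^+,
\end{equation*}
whose well-posedness yields $u \in \HH^1_0(\Omega_s^+)$ with $\|u\|_{\HH^1(\Omega_s^+)} \leq C \|p^+\|_{\L^2/\R}$; one sets $v_p^+ := u$.

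The main obstacle is mild but genuine: the non-star-shaped geometry of $\Omega_s^+$ prevents a one-line application of the explicit Bogovski\u{\i} formula, so the patching argument has to be carried out carefully so that each local correction preserves both the target divergence and the zero boundary trace on $\p \Omega_s^+$. I would favour the direct Bogovski\u{\i}/patching route rather than the Stokes reduction, since invoking well-posedness of the Stokes system on $\Omega_s^+$ would in turn rest on an inf-sup condition equivalent to the very statement we are trying to prove, creating a circularity.
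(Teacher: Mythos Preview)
Your proposal is correct and coincides with the paper's approach: the paper does not give an argument but simply cites \cite[Exercise~3.4, Chapter~III]{Galdi}, and your plan is precisely the Bogovski\u{\i} construction (with the patching extension to non-star-shaped domains) that this reference contains. Your remark that the statement tacitly requires $p^\pm$ to have zero mean, consistent with the spaces $Q^\pm = \L^2(\Omega_s^\pm)/\R$ used in the paper, is also correct.
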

\noindent The second lemma is given in~\cite[Theorem~1.1, Chapter~IV]{Galdi}.
\begin{lemma} \label{lemma-app2}
For $h^{\pm} \in \mathbf{W}$, there exists a unique solution $(v_h^\pm,q_h^\pm)$ in $\mathbf{V}^\pm \times Q^\pm$ to the Stokes problem
\begin{eqnarray}
\left\{ \begin{array} {rcl}
-\divg \sigma(v_h^\pm,q_h^\pm) = 0 \quad \text{ and } \quad 
\divg v_h^\pm = 0 & & \text{in } \Omega_s^{\pm}, \\
v_h^+  = 0 &  & \text{on } \p \Omega, \\
v_h^{\pm} = h^{\pm} & & \text{on } \Gamma_s.
\end{array} \right.
\label{sysstokesNHDBC}
\end{eqnarray}
Moreover, it satisfies
\begin{eqnarray}
\| v_h^{\pm} \|_{\mathbf{H}^1(\Omega_s^{\pm})} + \| q_h^{\pm} \|_{\L^2(\Omega_s^{\pm})} & \leq & C \|h^{\pm}\|_{\mathbf{H}^{1/2}(\Gamma_s)}. \label{eststokesNHDBC}
\end{eqnarray}
\end{lemma}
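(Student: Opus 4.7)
The plan is to treat the two subdomains separately and, for each, reduce the non\-homogeneous Dirichlet Stokes problem to a homogeneous one by constructing a suitable divergence-free lifting. The interior problem on $\Omega_s^-$ has full Dirichlet boundary $\Gamma_s$; the exterior problem on $\Omega_s^+$ has boundary $\p\Omega\cup\Gamma_s$ with $v_h^+=0$ on $\p\Omega$ and $v_h^+=h^+$ on $\Gamma_s$. In both cases the outward flux of the boundary datum must vanish in order to find an $\HH^1$ divergence-free extension, and this is exactly the content of the condition $h^\pm\in\mathbf{W}$: from the definition of $\mathbf{W}$ one has $\int_{\Gamma_s}h^\pm\cdot n_s = 0$, hence the compatibility condition holds on $\p\Omega_s^-$ and, after changing the sign of the outward normal on $\Gamma_s$, also on $\p\Omega_s^+$ (using $v_h^+=0$ on $\p\Omega$).

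With the compatibility condition in hand, the first step is to apply the classical right inverse of the divergence (e.g.\ Bogovski\u{\i}'s operator, as in~\cite[Chapter~III]{Galdi}) together with a boundary trace lifting to obtain $Rh^\pm\in\HH^1(\Omega_s^\pm)$ satisfying $Rh^\pm_{|\Gamma_s}=h^\pm$, $Rh^+_{|\p\Omega}=0$, $\divg Rh^\pm=0$, and the bound $\|Rh^\pm\|_{\HH^1(\Omega_s^\pm)}\le C\|h^\pm\|_{\HH^{1/2}(\Gamma_s)}$. Set $w^\pm:=v_h^\pm-Rh^\pm$. Then the pair $(w^\pm,q_h^\pm)$ must solve the homogeneous-Dirichlet Stokes problem with source $2\nu\divg\varepsilon(Rh^\pm)\in\HH^{-1}(\Omega_s^\pm)$ and zero divergence constraint.

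The second step is to solve this reduced problem by the standard saddle-point argument on the pair $(\HH^1_0(\Omega_s^\pm),\L^2(\Omega_s^\pm)/\R)$. Coercivity of the bilinear form $(u,v)\mapsto 2\nu\langle\varepsilon(u),\varepsilon(v)\rangle_{\LLL^2(\Omega_s^\pm)}$ on divergence-free velocity fields with vanishing trace follows from Korn's inequality (Lemma~\ref{lemma-Korn} applied to each subdomain), and the inf-sup condition for the pressure/divergence pair is Ne\v{c}as' classical inequality; together they yield existence and uniqueness of $(w^\pm,q_h^\pm)\in\HH^1_0(\Omega_s^\pm)\times\L^2(\Omega_s^\pm)/\R$ and an a-priori estimate in terms of the source. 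Reassembling $v_h^\pm=Rh^\pm+w^\pm$ and combining the two bounds gives~\eqref{eststokesNHDBC}, while uniqueness of $(v_h^\pm,q_h^\pm)$ is inherited from the uniqueness of $(w^\pm,q_h^\pm)$ (any two solutions differ by a zero-trace divergence-free Stokes solution, hence vanish up to the pressure constant).

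The only nontrivial ingredient is the divergence-free lifting with the $\HH^{1/2}$-to-$\HH^1$ estimate, which is where the compatibility $h^\pm\in\mathbf{W}$ is essential; everything else is the textbook Brezzi theory. I expect this to be the step requiring the most care, and it is exactly the reason the lemma is cited from~\cite[Chapter~IV]{Galdi} rather than proved from scratch.
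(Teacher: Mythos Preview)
Your proof sketch is correct and follows the standard route (divergence-free lifting via Bogovski\u{\i}, then Brezzi saddle-point theory on $\HH^1_0\times\L^2/\R$). Note, however, that the paper does not actually prove this lemma: it simply cites~\cite[Theorem~1.1, Chapter~IV]{Galdi} and moves on, so there is no ``paper's own proof'' to compare against --- your outline is essentially the content of that reference, and you correctly identified that the compatibility condition $h^\pm\in\mathbf{W}$ is the only place where care is needed.
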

\noindent We are now in position to prove Proposition~\ref{propinfsup} and Corollary~\ref{coroinfsup}.

\subsubsection*{Proof of Proposition~\ref{propinfsup}}
We adopt the method used for proving~\cite[Lemma~6, p.~144]{Stenberg}. Recall the definition of the bilinear form
\begin{equation*}
\begin{array} {rcl}
\mathcal{M}(\mathfrak{u},\mathfrak{v}) &  =  & 
2\nu \langle \varepsilon(u^+),\varepsilon(v^+)\rangle_{\mathbb{L}^2(\Omega_s^+)} 
+ 2\nu \langle \varepsilon(u^-),\varepsilon(v^-)\rangle_{\mathbb{L}^2(\Omega_s^-)} \\
& & - \langle p^+,\divg v^+ \rangle_{\L^2(\Omega_s^+)} 
- \langle q^+\divg u^+ \rangle_{\L^2(\Omega_s^+)} 
- \langle  p^-,\divg v^- \rangle_{\L^2(\Omega_s^-)} 
-\langle q^-,\divg u^-\rangle_{\L^2(\Omega_s^-)} \\
& & - \langle \lambda^+ , v^+-\varphi\rangle_{\mathbf{W}';\mathbf{W}} - 
\langle \mu^+, u^+ - \phi\rangle_{\mathbf{W}';\mathbf{W}}  - \langle \lambda^- , v^--\varphi\rangle_{\mathbf{W}';\mathbf{W}} - \langle \mu^-, u^- - \phi \rangle_{\mathbf{W}';\mathbf{W}},
\end{array}
\end{equation*}
with the notation $ \mathfrak{u} = (u^+,p^+,u^-,p^-,\lambda^+,\lambda^-,\phi)$ and $\mathfrak{v} = (v^+,q^+,v^-,q^-,\mu^+,\mu^-,\varphi) \in \mathfrak{V}$.\\

\noindent {\bf Step 1.} 
Choose $\mathfrak{v}_1 = (u^+,-p^+,u^-,-p^-,-\lambda^+,-\lambda^-,\phi)$. Then
\begin{eqnarray}
\mathcal{M}(\mathfrak{u},\mathfrak{v}_1) & = & 2\nu \left(\| \varepsilon(u^+) \|_{\mathbb{L}^2(\Omega_s^+)}^2 +   \| \varepsilon(u^-) \|_{\mathbb{L}^2(\Omega_s^-)}^2\right). \label{eststep1}
\end{eqnarray}

\noindent {\bf Step 2.} Choose $\mathfrak{v}_2 = (v^+_p,0,v^-_p,0,0,0,0)$, where $v_p^{\pm} \in \mathbf{H}^1_0(\Omega_s^{\pm})$ is the solution of system~\eqref{sysdivGaldi} corresponding to~$p^\pm$, satisfying~\eqref{estdivGaldi}. Then, using successively the Cauchy-Schwarz and the Young's inequalities for any $\alpha >0$, we obtain
\begin{eqnarray*}
\mathcal{M}(\mathfrak{u},\mathfrak{v}_2) & = & 
2\nu \langle\varepsilon(u^+),\varepsilon(v_p^+)\rangle_{\mathbb{L}^2(\Omega_s^+)} 
+ 2\nu \langle\varepsilon(u^-),\varepsilon(v_p^-)\rangle_{\mathbb{L}^2(\Omega_s^-)}
+ \| p^+\|^2_{\L^2(\Omega_s^+)} +  \| p^-\|^2_{\L^2(\Omega_s^-)} \\
& \geq & \| p^+\|^2_{\L^2(\Omega_s^+)} +  \| p^-\|^2_{\L^2(\Omega_s^-)} 
- \alpha \nu \| \varepsilon(u^+) \|_{\mathbb{L}^2(\Omega_s^+)}^2 
-  \alpha \nu \| \varepsilon(u^-) \|_{\mathbb{L}^2(\Omega_s^-)}^2 \\
& &  - \frac{\nu}{\alpha}\| \varepsilon(v_p^+) \|_{\mathbb{L}^2(\Omega_s^+)}^2 - \frac{\nu}{\alpha}\| \varepsilon(v_p^-) \|_{\mathbb{L}^2(\Omega_s^-)}^2.
\end{eqnarray*}
Furthermore, combining the Korn's inequality for $v_p^{\pm}$ and the estimate~\eqref{estdivGaldi}, we deduce
\begin{eqnarray}
\mathcal{M}(\mathfrak{u},\mathfrak{v}_2) & \geq & \left(1 - \frac{C\nu}{\alpha} \right)\left( \| p^+\|^2_{\L^2(\Omega_s^+)} +  \| p^-\|^2_{\L^2(\Omega_s^-)} \right) 
- \alpha \nu \left(\| \varepsilon(u^+) \|_{\mathbb{L}^2(\Omega_s^+)}^2 
+ \| \varepsilon(u^-) \|_{\mathbb{L}^2(\Omega_s^-)}^2 \right), \label{eststep2}
\end{eqnarray}
where the constant $C>0$ is independent of $\alpha>0$.\\

\noindent {\bf Step 3.} 
Choose $\mathfrak{v}_3 = (v_{h_n}^+,q_{h_n}^+,v_{h_n}^-,q_{h_n}^-,0,0,0)$, where $(v_{h_n}^{\pm},q_{h_n}^{\pm})$ is the solution of the Stokes system~\eqref{sysstokesNHDBC} with ${v^{\pm}_{h_n}}_{|\Gamma} = \|\lambda^\pm\|_{\mathbf{W}'} h_n^\pm$ as data, where the sequences $(h_n^\pm)_n$ are such that $\|h_n^\pm\|_{\mathbf{W}} = 1$ and $-\langle \lambda^\pm , h_n^\pm \rangle_{\mathbf{W}';\mathbf{W}} \rightarrow \|\lambda^\pm\|_{\mathbf{W}'}$. Then for some $\beta >0$, the same combination of the Cauchy-Schwarz and Young's inequalities yields
\begin{eqnarray*}
\mathcal{M}(\mathfrak{u},\mathfrak{v}_3) & = & 
2\nu \langle\varepsilon(u^+),\varepsilon(v_{h_n}^+)\rangle_{\mathbb{L}^2(\Omega_s^+)}
+ 2\nu \langle\varepsilon(u^-),\varepsilon(v_{h_n}^-)\rangle_{\mathbb{L}^2(\Omega_s^-)} \\
& & -\langle q_{h_n}^+,\divg u^+ \rangle_{\L^2(\Omega_s^-)} 
-\langle q_{h_n}^-,\divg u^- \rangle_{\L^2(\Omega_s^-)}  
- \langle \lambda^+,h_n^+ \rangle_{\mathbf{W}',\mathbf{W}}
- \langle \lambda^-,h_n^- \rangle_{\mathbf{W}',\mathbf{W}} \\
& \geq & 
- \langle \lambda^+,h_n^+ \rangle_{\mathbf{W}',\mathbf{W}}
- \langle \lambda^-,h_n^- \rangle_{\mathbf{W}',\mathbf{W}} \\
& & -\frac{1}{\beta}\left( \nu\| \varepsilon(v_{h_n}^+) \|_{\mathbb{L}^2(\Omega_s^+)}^2 + \|q^+_{h_n}\|^2_{\L^2(\Omega_s^+)} + \nu\| \varepsilon(v_{h_n}^-) \|_{\mathbb{L}^2(\Omega_s^-)}^2 +  \|q^-_{h_n}\|^2_{\L^2(\Omega_s^-)} \right) \\
& & - \beta \left(\nu\| \varepsilon(u^+) \|_{\mathbb{L}^2(\Omega_s^+)}^2 + \|\divg u^+\|^2_{\L^2(\Omega_s^+)} + \nu\| \varepsilon(u^-) \|_{\mathbb{L}^2(\Omega_s^-)}^2 +  \|\divg u^-\|^2_{\L^2(\Omega_s^-)} \right).
\end{eqnarray*}
Estimate~\eqref{eststokesNHDBC} yields $\|v_{h_n} \|_{\HH^1(\Omega_s^\pm)} + \|q^\pm_{{h_n}}\|_{\L^2(\Omega_s^\pm)} \leq C\|\lambda^\pm\|_{\mathbf{W}'}$, and by passing to the limit we deduce
\begin{eqnarray}
\mathcal{M}(\mathfrak{u},\mathfrak{v}_3) & \geq & \left(1-\frac{C}{\beta} \right)\left(\| \lambda^+ \|_{\mathbf{W}'}^2 + \| \lambda^- \|_{\mathbf{W}'}^2 \right) - C\beta\nu \left(\| \varepsilon(u^+) \|_{\mathbb{L}^2(\Omega^+)}^2 + \| \varepsilon(u^-) \|_{\mathbb{L}^2(\Omega^-)}^2 \right), \label{eststep3}
\end{eqnarray}
where here again the generic constant $C>0$ is independent of $\beta>0$.\\

\noindent {\bf Step 4.} Choose $\mathfrak{v}_4 = (0,0,0,0,\phi,\phi,0)$. With the Young's inequalities we estimate
\begin{eqnarray}
\mathcal{M}(\mathfrak{u},\mathfrak{v}_4) & = & 2\| \phi \|^2_{\mathbf{W}} - \langle \phi , u^+ \rangle_{\mathbf{W}';\mathbf{W}} - \langle \phi , u^- \rangle_{\mathbf{W}';\mathbf{W}} \geq
2\| \phi \|^2_{\mathbf{W}} -
\| \phi\|_{\mathbf{W}'} \|u^+\|_{\mathbf{W}}
- \| \phi\|_{\mathbf{W}'} \|u^-\|_{\mathbf{W}}
, \nonumber \\
\mathcal{M}(\mathfrak{u},\mathfrak{v}_4) & \geq & \| \phi \|^2_{\mathbf{W}}- \frac{1}{2} \left(\|u^+\|^2_{\mathbf{W}} + \|u^-\|^2_{\mathbf{W}} \right), \nonumber \\
\mathcal{M}(\mathfrak{u},\mathfrak{v}_4) & \geq & \| \phi \|^2_{\mathbf{W}}- C \left(\|u^+\|^2_{\mathbf{V}} + \|u^-\|^2_{\mathbf{V}} \right), \label{eststep4}
\end{eqnarray}
where $C>0$ is deduced from the constant of the trace operators and those of the Korn's inequality.

\hfill \\
{\bf Step 5.} Choose $\mathfrak{v} = \mathfrak{v}_1 + \gamma_2 \mathfrak{v}_2 + \gamma_3\mathfrak{v}_3 + \gamma_4 \mathfrak{v}_4$, for some positive constants $\gamma_2$, $\gamma_3$ and $\gamma_4$. Then, the estimates~\eqref{eststep1}--\eqref{eststep4} yields
\begin{eqnarray*}
\mathcal{M}(\mathfrak{u},\mathfrak{v}) & \geq & \left(2\nu - \alpha \nu \gamma_2 - \beta \nu \gamma_3 - C\gamma_4 \right)\left( \| \varepsilon(u^+) \|_{\mathbb{L}^2(\Omega^+)}^2 +   \| \varepsilon(u^-) \|_{\mathbb{L}^2(\Omega^-)}^2 \right) \\
& & +\gamma_2\left(1-\frac{C\nu}{\alpha}\right)\left( \| p^+\|^2_{\L^2(\Omega^+)} +  \| p^-\|^2_{\L^2(\Omega^-)} \right) + \gamma_3\left(1-\frac{C}{\beta} \right)\left(\| \lambda^+ \|_{\mathbf{W}'}^2 + \| \lambda^- \|_{\mathbf{W}'}^2 \right)
+ \gamma_4\| \phi \|^2_{\mathbf{W}}.
\end{eqnarray*}
By choosing $\alpha$ and $\beta$ large enough ($\alpha > C\nu$ and $\beta > C$), and next $\gamma_2$, $\gamma_3$ and $\gamma_4$ small enough, we get
\begin{equation}  \label{final-est}
\mathcal{M}(\mathfrak{u},\mathfrak{v})  \geq  \delta\| \mathfrak{u} \|^2,
\end{equation}
for some constant $\delta>0$, after having used the Korn's inequality of Lemma~\ref{lemma-Korn}. It remains us to verify that the norm of $\mathfrak{v}$ so chosen is controlled by the norm of $\mathfrak{u}$, namely the estimate $
\| \mathfrak{v}\|  \leq  C \| \mathfrak{u}\|$,
which holds from~\eqref{estdivGaldi} and~\eqref{eststokesNHDBC}. Thus we obtain $
\displaystyle \frac{\mathcal{M}(\mathfrak{u},\mathfrak{v})}{\| \mathfrak{v} \|}  \geq  C\| \mathfrak{u} \|$, which enables us to complete the proof.

\subsubsection*{Proof of Corollary~\ref{coroinfsup}}
The result is a direct consequence of the Banach-Ne\v{c}as-Babu\v{s}ka theorem (see~\cite[Theorem~2.6, page~85]{Ern}). The continuity of the linear mapping $\mathcal{G}$ is obvious. Given the inf-sup condition of Proposition~\ref{propinfsup}, it remains us to verify the injectivity property for the bilinear form. Let $\mathfrak{u} \in \mathfrak{V}$ be such that $\mathcal{M}(\mathfrak{u},\mathfrak{v}) = 0$ for all $\mathfrak{v} \in \mathfrak{V}$. It is sufficient to choose~$\mathfrak{v}$ like in the last step of the proof of Proposition~\ref{propinfsup}, so that~\eqref{final-est} holds, and implies $\mathfrak{u} = 0$, which concludes the proof.

\printbibliography

\end{document}